\newcommand{\newsiamdefn}[2]{
	\theoremstyle{plain}
	\theoremheaderfont{\normalfont\sc}
	\theorembodyfont{\normalfont}
	\theoremseparator{.}
	\theoremsymbol{}
	\newtheorem{#1}{#2}
}
\DeclareMathOperator{\spanned}{span}
\def\R{\mathbb{R}}
\def\P{\mathcal{P}}
\def\L{\mathcal{L}}
\def\G{\mathcal{G}}
\def\eps{\epsilon}
\title{An orthogonality-preserving approach\\ for eigenvalue problems\thanks{Version: \today. 
		\funding{This work was supported by the National Natural Science Foundation of China undergrants 12571446 and 92270206, the Strategic Priority Research Program of the Chinese Academy of Sciences under grant XDB0640000, and the National Key R \& D Program of China under grants 2019YFA0709600 and 2019YFA0709601.}}
}
\author{
	Tianyang Chu\thanks{SKLMS, Academy of Mathematics and Systems Science, Chinese Academy of Sciences, Beijing 100190, China (\email{tchu@lsec.cc.ac.cn}).}
	\and Xiaoying Dai\thanks{SKLMS, Academy of Mathematics and Systems Science, Chinese Academy of Sciences, Beijing 100190, China; and School of Mathematical Sciences, University of Chinese Academy of Sciences, Beijing 100049, China (\email{daixy@lsec.cc.ac.cn}, \email{wangshengyue@amss.ac.cn}, \email{azhou@lsec.cc.ac.cn}).}
	\and Shengyue Wang\footnotemark[3]
	\and Aihui Zhou\footnotemark[3]
}
\begin{document}
	
	\maketitle
	
	\begin{abstract}
		Solving large-scale eigenvalue problems poses a significant challenge due to the computational complexity and limitations on the parallel scalability of the orthogonalization operation, when many eigenpairs are required. In this paper, we propose an intrinsic orthogonality-preserving model, formulated as an evolution equation, and a corresponding numerical method for eigenvalue problems. The proposed approach automatically preserves orthogonality and exhibits energy dissipation during both time evolution and numerical iterations, provided that the initial data are orthogonal—thus offering an accurate and efficient approximation for the large-scale eigenvalue problems with orthogonality constraints. Furthermore, we rigorously prove the convergence of the scheme without the time step size restrictions imposed by the CFL conditions. Numerical experiments not only corroborate the validity of our theoretical analyses but also demonstrate the remarkably high efficiency of the algorithm.
	\end{abstract}
	
	\begin{keywords}
		 orthogonality-preserving, eigenvalue problem, evolution equation, energy decay, convergence analysis
	\end{keywords}
	
	\begin{MSCcodes}
		65N12, 65N25
	\end{MSCcodes}
	
	\section{Introduction}\label{sec: intro}
	
	Eigenvalue problems play a crucial role in many fields of science and engineering, such as quantum mechanics \cite{dirac1929quantum,shankar2012principles}, structural dynamics \cite{bathe1973solution}, and data analysis \cite{Jolliffe2002Principal,mackiewicz1993principal}. In computational chemistry and materials science \cite{le2005computational,leszczynski2012handbook,martin2020electronic}, it is often required to find many eigenvalues and ensure that the corresponding eigenfunctions are mutually orthogonal. Specifically, electronic structures are often modeled by the Hartree-Fock or Kohn-Sham equations \cite{cances2023density,parr1989density,saad2010numerical}, which are nonlinear eigenvalue problems. These models involve computing numerous eigenpairs, a process that, after discretization, is typically reduced to repeatedly solving large-scale linearized eigenvalue problems via self-consistent field (SCF) iterations \cite{cances2023density, chen2014adaptive, chen2013numerical, saad2010numerical}. In each SCF iteration, one must solve for eigenpairs under orthogonality constraints to ensure the eigenfunctions are mutually orthogonal. Specifically, these eigenfunctions correspond to the wavefunctions of electrons in a system and are widely referred to as orbitals.

	However, one of the major challenges in solving these large-scale eigenvalue problems with many eigenpairs is the orthogonalization operation performed to ensure that the eigenfunctions are mutually orthogonal \cite{martin2020electronic,saad2010numerical,Sholl2009}. This orthogonalization step introduces additional computational complexity, which grows dramatically with the problem size, making large-scale calculations computationally challenging. Moreover, its associated communication overhead severely limits the parallel scalability of conventional eigenvalue solvers \cite{golub2013matrix,Saad1992}. These challenges motivate the development of new approaches that mitigate or avoid the frequent, costly eigenfunction orthogonalization steps, enabling far more efficient computation of large-scale eigenvalue problems.
	
	Dai et al. \cite{dai2020,dai2021convergent} proposed an extended gradient flow based Kohn–Sham model whose time evolution intrinsically preserves orthogonality of the orbitals. In their schemes, the Kohn–Sham orbitals evolve according to an $L^2$ gradient flow, and a midpoint (or midpoint-like) time discretization is used to update the solution. This approach completely avoids any explicit orthogonalization step. However, the time steps of these schemes are restricted by Courant-Friedrichs-Lewy (CFL) conditions. Meanwhile, the $L^2$ gradient flow tends to converge slowly for high-frequency components of the orbitals and typically forces the use of small time steps for stability, which greatly limits the size of the time step. The restriction of time step leads to an increase in the number of iteration steps to convergence, thereby resulting in a long convergence time and high computational cost.
	
	In this paper, we develop an intrinsic orthogonality-preserving model and its corresponding numerical method for eigenvalue problems that address the drawbacks of existing methods. Our main contributions are as follows:
	\begin{itemize}
		\item We propose and analyze an orthogonality-preserving model described by the evolution equation (\ref{equ: evolution problem}). For this model, we establish its global well-posedness (Theorem \ref{thm:global well-posedness}) to guarantee solution existence and uniqueness, and further prove two key intrinsic properties of the model: if the initial data are orthogonal, the solutions maintain orthogonality at all times, and the energy decays monotonically over time (Proposition \ref{prop: orthogonality preserving}).
		
		\item We prove the exponential convergence of the solution to the ground state under certain conditions on the initial energy (Theorem \ref{thm: convergence to the ground state}). This ground state is also the solution of the corresponding eigenvalue problem, making our model suitable for solving eigenvalue problems.
		
		\item For the model (\ref{equ: evolution problem}), we propose an explicit time-stepping scheme for the temporal discretization, which is proven to be orthogonality-preserving (Theorem \ref{thm: orthogonality of Un}). This scheme ensures that the orthogonality of the solution is maintained throughout the numerical simulation. We also design an algorithm (Algorithm \ref{alg:Discretization scheme}) that eliminates the need for implicit solving, making each iteration simple and parallel-friendly. 
		
		\item We prove that our orthogonality-preserving iteration scheme produces approximations that converge to the solutions of the corresponding eigenvalue problem with an exponential convergence rate and the energy decreases exponentially under reasonable assumptions (Theorem \ref{thm: exponential convergence of Un by orbtial}). These convergence properties guarantee the accuracy and efficiency of our numerical method.
		
		\item Our numerical analysis is conducted within an infinite-dimensional Hilbert space. This feature naturally avoids any time step size restrictions imposed by CFL conditions-with the corresponding numerical results shown in Table~\ref{table:Mesh-independent time step}. This freedom from CFL constraints offers greater flexibility in implementation (no need for small, restrictive time steps) and highlights the high efficiency of the proposed method.
	\end{itemize}
	It should be pointed out that each orbital in the initial data will evolve independently to its corresponding orbital in the equilibrium state. In contrast to the convergence of the subspace spanned by all orbitals, our approach ensures that the convergence occurs separately for each orbital, a property we refer to as ``orbital-wise" convergence.
	
	The remainder of the paper is organized as follows: In Section \ref{sec: preliminaries}, we introduce the preliminaries, including notation and the problem setting. Section \ref{sec: continuous model} focuses on the proposal of an orthogonality-preserving model and its mathematical analysis. We present the model, prove its orthogonality-preserving property, and establish its global well-posedness. In Section \ref{section:Time discretization}, we propose an orthogonality-preserving iterative method for the model introduced in Section \ref{sec: continuous model}. We conduct a thorough numerical analysis to demonstrate the convergence properties of the proposed scheme and algorithm. Section \ref{section:Numerical experiments} presents several numerical experiments that support our theoretical results. Section \ref{section:Conclusion} concludes the paper by summarizing our contributions and ongoing work. Detailed proofs for the lemmas used throughout the paper are provided in the Appendix.

	\section{Preliminaries}\label{sec: preliminaries}
	
	\subsection{Notation}
	
	Let $\Omega \subset \R^d\ (d\in \mathbb{N}_+)$ be a bounded domain with boundary $\partial\Omega$. Let $H^k(\Omega)$, $k \geqslant 0$, be the conventional Sobolev space, and set 
	\begin{align*}
		L^2(\Omega) = H^0(\Omega) \qquad \text{and} \qquad H_0^1(\Omega)= \{ v \in H^1(\Omega),\,\, v = 0 \,\, \text{on $\partial\Omega$} \}.
	\end{align*}
	We define the inner product $(\cdot,\cdot)$ and norm $\|\cdot\|$ of the space $L^2(\Omega)$ respectively as
	\begin{align*}
		(u,v)=\int_{\Omega} uv \qquad \text{and} \qquad \|u\| =\sqrt{(u,u)}.
	\end{align*}
    
	For $N \in \mathbb{N}_+$ and a Hilbert space $\mathcal{H}$ with inner product $(\cdot,\cdot)_{\mathcal{H}}$, we denote by $\langle U,V \rangle_{\mathcal{H}} \in \R^{N \times N}$ for $U = (u_1,\cdots,u_N), \, V = (v_1,\cdots,v_N)$ the inner matrix
	\begin{align*}
		\big( \langle U,V \rangle_{\mathcal{H}} \big)_{ij} = (u_i,v_j)_{\mathcal{H}}, \qquad i,j \in \{1, \cdots, N\}.
	\end{align*}
	Note that the inner matrix satisfies 
	\begin{align*}
		\langle U, V \rangle_{\mathcal{H}}  = \langle V, U \rangle_{\mathcal{H}}^{\top}, \qquad \forall\, U,V \in \mathcal{H}^N.
	\end{align*}
	For a matrix $Q = (q_{ij})_{i,j = 1}^N \in \R^{N \times N}$ and $U \in \mathcal{H}^N$, $UQ$ denotes an element in $\mathcal{H}^N$ defined by $(UQ)_k = \sum_{i = 1}^N u_iq_{ik}$, and it is easy to see that
	\begin{align*}
		\langle U, VQ \rangle_{\mathcal{H}} = \langle U, V \rangle_{\mathcal{H}}Q \qquad \text{and} \qquad \langle UQ, V \rangle_{\mathcal{H}} = Q^\top\langle U, V \rangle_{\mathcal{H}}.
	\end{align*} 
	With the inner matrix, the inner product and norm of the Hilbert space $\mathcal{H}^N$ can be read as 
	\begin{align*}
		(U,V)_{\mathcal{H}}= \text{tr}(\langle U, V \rangle_{\mathcal{H}}) \qquad \text{and} \qquad \|U\|_{\mathcal{H}} = \sqrt{(U,U)_{\mathcal{H}}}.
	\end{align*}
	
	Let \(\lambda(A)\) denote an eigenvalue of the matrix $A$ (i.e., a scalar satisfying \(A\bm{\alpha} = \lambda \bm{\alpha} \) for some non-zero \(\bm{\alpha}\)). And we define \(\lambda_{\min}(A)\) as the minimum eigenvalue of $A$.
	
	In this paper, we denote by $C$ a generic constant which may be different at different occurrences.

	\subsection{Problem settings}
	
	Let $\mathcal{V}: \Omega \to \R$ be a potential function and we define the following bilinear form on  $H_0^1(\Omega) \times H_0^1(\Omega)$, 
	\begin{align*}
		a(u,v) \stackrel{\Delta}{=} \int_{\Omega}\left(  \nabla u \cdot \nabla v + \mathcal{V}uv \right), \qquad \forall\, u,v \in H_0^1(\Omega).
	\end{align*} 
	Throughout this paper, we make the following assumption on $ \mathcal{V}$: The bilinear form $a(\cdot,\cdot)$ is an inner product on $H_0^1(\Omega)$, and there exist two positive constants $c_1$ and $c_2$ such that  
	\begin{align*}
		c_1 \|\nabla u\|^2 \leqslant a(u,u)\leqslant c_2 \|\nabla u\|^2, \qquad \forall\, u \in H_0^1(\Omega).
	\end{align*} 
	It should be mentioned that our results are also valid for a more general bilinear form $a(\cdot,\cdot)$ that satisfies
	\begin{align*}
		a(u,u)& \geqslant  \frac12 \|\nabla u \|^2 - C \|u\|^2,\qquad \forall\, u \in H_0^1(\Omega),
	\end{align*}
	where $C>0$ is some constant (see Remark 2.9 in \cite{dai2008convergence} for details).
	Consequently, the space $H_0^1(\Omega)$ equipped with the inner product $a(\cdot,\cdot)$ is a Hilbert space, and we use notations $(\cdot,\cdot)_a \stackrel{\Delta}{=} a(\cdot,\cdot)$ and $\|\cdot\|_a  \stackrel{\Delta}{=} \sqrt{(\cdot,\cdot)_a}$ to represent the inner product and the norm, respectively.

	The classical PDE theory \cite{evans2010partial} shows that the following eigenvalue problem: Find $(u,\lambda) \in H_0^1(\Omega) \times \R$ 
	such that the equation
	\begin{align*}
		- \Delta u +  \mathcal{V} u = \lambda u, \qquad \|u\|=1
	\end{align*}
	holds in the weak sense and admits infinitely many eigenvalues $0 < \lambda_1 \leqslant \lambda_2 \leqslant \lambda_3 \leqslant \cdots $. 
     The eigenfunction $u$ corresponding to the smallest eigenvalue $\lambda_1$ satisfies 
	\begin{align*}
		E(u) = \min_{v \in \mathcal{M}} E(v), \qquad \mathcal{M} = \{ v \in H_0^1(\Omega): \, \|v\|=1 \}, 
	\end{align*}
	where $E(v)$ denotes the energy functional defined by
	\begin{align*}
		E(v) \stackrel{\Delta}{=} \frac12 (v,v)_a = \frac12 \int_{\Omega} (|\nabla v|^2 +  \mathcal{V}v^2). 
	\end{align*}
	
	In practical applications, it is often necessary to compute several eigenvalues and their corresponding mutually orthogonal eigenfunctions simultaneously. In other words, we seek $(u_i,\lambda_i) \in H_0^1(\Omega) \times \R \ (i = 1,2,\cdots)$ such that
	\begin{equation}\label{equ: model problem 0}
		- \Delta u_i +  \mathcal{V}u_i = \lambda_i u_i, \qquad (u_i,u_j) = \delta_{ij}
	\end{equation}
     holds in the weak sense.
    In particular, we focus on solving the eigenvalue problem in the weak sense with the $N$ smallest eigenvalues:
	\begin{equation}\label{equ: model problem 1}
		- \Delta U^* +  \mathcal{V} U^* = U^* \Lambda^*, \qquad U^* \in \mathcal{M}^N,
	\end{equation}
	where $\mathcal{M}^N \stackrel{\Delta}{=} \{ U \in [H_0^1(\Omega)]^N:  \langle U, U \rangle = I_N \}$ is the set of orthogonal functions in $[H_0^1(\Omega)]^N$, $\Lambda^* = \text{diag}\left(\lambda_1,\lambda_2, \cdots,\lambda_N\right)$ with $\lambda_N < \lambda_{N+1}$, and the columns of $U^*$ are the corresponding eigenfunctions.
	
	On the other hand, the solution $(U^*,\Lambda^*)$ also satisfies the minimization problem
	\begin{align}\label{equ: model problem 2}
		E(U^*) = \min_{U \in \mathcal{M}^N} E(U),
	\end{align} 
	where the energy functional $E(U)  \stackrel{\Delta}{=} \frac12 (U,U)_a$. Note that the assumption $\lambda_N < \lambda_{N+1}$ guarantees that for any solution $\bar{U} \in \mathcal{M}^N$ of the minimization problem \eqref{equ: model problem 2}, there exists an orthogonal matrix $Q$ such that $\bar{U} = U^*Q$ (see \cite{schneider2009direct}). Then it is natural to consider the equivalence relation $\thicksim$ on $\mathcal{M}^N$: $U \thicksim V$ implies $V = UQ$ for some orthogonal matrix $Q$. Let $[U]$ denote the equivalence class, i.e., $[U] = \{ V \in \mathcal{M}^N: \, U \thicksim V  \}$. For two equivalence classes $[U]$ and $[V]$, the distance between them is defined as
	\begin{align*}
		\| [U] - [V] \|_a = \min\limits_{Q \in \mathcal{O}^N} \| U - VQ\|_a,
	\end{align*}
	where $\mathcal{O}^N$ denotes the set of all $N \times N$ orthogonal matrices.
	
	Hence, it is convenient to solve $[U^*]$ instead of $U^*$. In this sense, every $ \bar{U} \in [U^*]$ satisfies 
	\begin{align*}
		E(\bar{U}) = \min_{U \in \mathcal{M}^N} E(U)
	\end{align*}
	and there exists an orthogonal matrix $Q^*$ such that 
	\begin{align}\label{equ: model problem 3}
		- \Delta \bar{U} +  \mathcal{V} \bar{U} = \bar{U} \Lambda, \qquad \Lambda = \Lambda^* Q^*
	\end{align}
    holds in the weak sense.

	\section{An orthogonality-persevering model}\label{sec: continuous model}
	
	We exploit the analytical structure of the original problem \eqref{equ: model problem 1} and find that the solution of \eqref{equ: model problem 1} can be attained by following a suitable evolution equation whose solutions $U(t)$ preserve orthogonality when initialized with orthogonal data and should approach the minimizer $U^*$ as $t \to \infty$. We refer to this equation as the orthogonality-preserving model.
	
	\subsection{The model}

  Different from the eigenvalue problem (\ref{equ: model problem 1}) and the minimization problem (\ref{equ: model problem 2}), we introduce the following evolution problem: Seek a solution $U(t) \in C^1\big([0,\infty);[H_0^1(\Omega)]^N\big)$ such that
	\begin{equation}\label{equ: evolution problem}
		\frac{\text{d}U}{\text{d}t} = - U \langle \G U,  U \rangle  + \G U\langle U, U \rangle, \qquad U(0) = U^0 \in \mathcal{M}^N,
	\end{equation}
    where the operator $\G =(-\Delta +  \mathcal{V})^{-1} : H^{-1}(\Omega) \to H_0^1(\Omega)$.
    \begin{remark}
        We conclude from \eqref{equ: model problem 3} that the solution $U^*$ of \eqref{equ: model problem 1} satisfies  
	\begin{align*}
		U^* = \G U^* \langle U^*, \G U^* \rangle^{-1} \langle U^*, U^* \rangle.
	\end{align*}
	Due to $\langle U^*, U^* \rangle = I_N$, there holds 
	\begin{align*}
		U^* \langle U^*, \G U^* \rangle  = \G U^*\langle U^*, U^* \rangle.
	\end{align*}
	Thus, $U^*$ can be viewed as the steady state of (\ref{equ: evolution problem}). Consequently, the limit of the solution $U(t)$ of \eqref{equ: evolution problem} can be used to approximate $U^*$, thereby enabling us to obtain the solution to the eigenvalue problem \eqref{equ: model problem 1} by solving the evolution problem (\ref{equ: evolution problem}).
    \end{remark}

	Fixing $U \in [H_0^1(\Omega)]^N$, we define the operator $\mathcal{L}_U: [H_0^1(\Omega)]^N \to [H_0^1(\Omega)]^N$ as
	\begin{equation}\label{definition:operator L_U}
		\mathcal{L}_U V= U \langle \mathcal{G} U, V \rangle - \mathcal{G} U\langle U, V \rangle, \qquad \forall \, V \in [H_0^1(\Omega)]^N.
	\end{equation}
	Therefore, the equation \eqref{equ: evolution problem} can be written as
	\begin{equation}\tag{\ref{equ: evolution problem}$*$}\frac{\mathrm{d}U}{\mathrm{d}t} = - \mathcal{L}_U U, \qquad U(0) = U^0 \in \mathcal{M}^N.
	\end{equation}
	
		We see from \cite{evans2010partial} that the operator $\G$ is well-defined and satisfies 
	\begin{align*}
		\|\G u\|_a \leqslant C \|u\|_{H^{-1}(\Omega)} \leqslant C \|u\|, \qquad \forall\, u \in L^2(\Omega).
	\end{align*}
	Moreover, the operator $\G$ satisfies the following properties: 
	\begin{align*}
		&(\G u,v)  = (\G v ,u ) = (\G u, \G v)_a,\quad \forall\, u,v \in L^2(\Omega), \\
		&(\G u,v)_a  = (\G v ,u )_a  = (u,v),\quad \forall\, u,v \in L^2(\Omega).
	\end{align*}
	These identities imply semi-positive definiteness, as stated in the following lemma. Its proof is provided in Appendix~\ref{proof of lemma:B_n exists}.
	\begin{lemma}\label{lemma:B_n exists}
		For $U \in [L^2(\Omega)]^N$ with $\langle U, U \rangle = I_N$. The matrix 
		\begin{align*}
			\langle  \mathcal{G}U, \mathcal{G} U \rangle -   \langle  \mathcal{G}U,  U \rangle \langle \mathcal{G}U,  U \rangle
		\end{align*}
		is semi-positive definite.
	\end{lemma}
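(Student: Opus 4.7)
The plan is to identify the matrix in question as a Gram matrix of orthogonal components and conclude semi-positive definiteness from this structural fact. Abbreviate $A = \langle \G U, \G U\rangle$ and $B = \langle \G U, U\rangle$. First I would use the self-adjointness property $(\G u, v) = (\G v, u)$ listed just above the lemma to observe that $B$ is a symmetric matrix, so that $BB = BB^{\top}$, allowing the second term in the statement to be interpreted as an $N\times N$ Gram-type matrix.

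Next, since $\langle U, U\rangle = I_N$, the family $\{u_1,\dots,u_N\}$ is $L^2$-orthonormal. I would introduce the $L^2$-orthogonal projection $P$ onto $\spanned\{u_1,\dots,u_N\}$, given explicitly by $Pw = \sum_{k=1}^N (w,u_k)\, u_k$. Applying $P$ to $\G u_i$ yields
\begin{equation*}
P\G u_i \;=\; \sum_{k=1}^N (\G u_i, u_k)\, u_k \;=\; \sum_{k=1}^N B_{ik}\, u_k,
\end{equation*}
and a direct computation (using $L^2$-orthonormality of the $u_k$) gives
\begin{equation*}
(P\G u_i,\, P\G u_j) \;=\; \sum_{k=1}^N B_{ik} B_{jk} \;=\; (BB^{\top})_{ij} \;=\; (B\,B)_{ij}.
\end{equation*}

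Then I would use the Pythagorean decomposition in $L^2$: since $P$ and $I-P$ are $L^2$-orthogonal projections, for each $i,j$,
\begin{equation*}
A_{ij} \;=\; (\G u_i, \G u_j) \;=\; (P\G u_i, P\G u_j) \;+\; ((I-P)\G u_i,\, (I-P)\G u_j).
\end{equation*}
Combining the two displays yields
\begin{equation*}
(A - B\,B)_{ij} \;=\; \bigl((I-P)\G u_i,\, (I-P)\G u_j\bigr),
\end{equation*}
which exhibits $A - BB$ as the $L^2$-Gram matrix of the vectors $(I-P)\G u_i$ for $i=1,\dots,N$. Any Gram matrix in a real inner product space is symmetric and semi-positive definite, so the claim follows.

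I do not anticipate a serious obstacle: the whole argument is essentially the recognition that subtracting $BB$ from $A$ corresponds to projecting away the component of each $\G u_i$ lying in the span of the $u_k$. The only point that requires care is keeping track of the matrix-index conventions from the preliminaries (in particular, that $(\langle U,V\rangle)_{ij} = (u_i,v_j)$ and that the symmetry of $B$ is what permits writing $BB = BB^{\top}$); once those conventions are invoked, the computation of $P\G u_i$ and the Pythagorean identity deliver the result directly.
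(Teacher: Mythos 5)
Your proof is correct and rests on the same idea as the paper's: nonnegativity follows from the Pythagorean/Bessel decomposition of $\mathcal{G}u_i$ with respect to the $L^2$-orthonormal span of $u_1,\dots,u_N$. The paper carries out the equivalent computation at the level of the scalar quadratic form $\bm{\alpha}\big(\langle\mathcal{G}U,\mathcal{G}U\rangle-\langle\mathcal{G}U,U\rangle\langle\mathcal{G}U,U\rangle\big)\bm{\alpha}^{\top}=\|\mathcal{G}u\|^2-\sum_{k}(u_k,\mathcal{G}u)^2$ for arbitrary $\bm{\alpha}\in\R^N$ with $u=\sum_i\alpha_i u_i$, whereas you exhibit the whole matrix $A-BB$ directly as a Gram matrix of the projected-out components $(I-P)\mathcal{G}u_i$ — a mild repackaging of the same argument.
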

	Furthermore, these identities also establish the skew-symmetry of the operator $\mathcal{L}_U$, formalized in the subsequent lemma.
	\begin{lemma}\label{lem: skew-symmetry}
		If $U,V,W \in [H_0^1(\Omega)]^N$, then 
		\begin{align*}
			\langle V, \mathcal{L}_U W \rangle + \langle \mathcal{L}_U V, W \rangle = 0.
		\end{align*}
		Moreover, for any $U \in \mathcal{M}^N$, there holds $\langle U, \mathcal{L}_U U \rangle = 0$.
	\end{lemma}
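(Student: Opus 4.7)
The plan is to prove both identities by direct computation, relying on three algebraic facts stated earlier: the transposition rule $\langle V,U\rangle = \langle U,V\rangle^{\top}$, the matrix-factoring identities $\langle UQ,V\rangle = Q^{\top}\langle U,V\rangle$ and $\langle U,VQ\rangle = \langle U,V\rangle Q$, and the self-adjointness of $\mathcal{G}$, i.e. $(\mathcal{G}u,v) = (u,\mathcal{G}v)$ on $L^{2}(\Omega)$. In matrix form the last fact reads $\langle V,\mathcal{G}U\rangle = \langle \mathcal{G}U,V\rangle^{\top} = \langle \mathcal{G}V,U\rangle$, and in particular $\langle \mathcal{G}U,U\rangle$ is symmetric.

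First I would expand both terms using the definition $\mathcal{L}_U V = U\langle \mathcal{G}U,V\rangle - \mathcal{G}U\langle U,V\rangle$. Applying the factoring identities gives
\begin{align*}
\langle V,\mathcal{L}_U W\rangle &= \langle V,U\rangle\,\langle \mathcal{G}U,W\rangle - \langle V,\mathcal{G}U\rangle\,\langle U,W\rangle,\\
\langle \mathcal{L}_U V,W\rangle &= \langle \mathcal{G}U,V\rangle^{\top}\langle U,W\rangle - \langle U,V\rangle^{\top}\langle \mathcal{G}U,W\rangle.
\end{align*}
Transposition then turns $\langle \mathcal{G}U,V\rangle^{\top}$ into $\langle V,\mathcal{G}U\rangle$ and $\langle U,V\rangle^{\top}$ into $\langle V,U\rangle$, after which adding the two expressions makes the four terms cancel pairwise, giving the skew-symmetry identity.

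For the second statement, taking $V=W=U$ in the identity just proved shows that $\langle U,\mathcal{L}_U U\rangle$ is skew-symmetric, but this by itself is not enough; I would therefore compute the matrix directly. When $U\in\mathcal{M}^{N}$, so that $\langle U,U\rangle = I_{N}$, the definition gives
\begin{align*}
\langle U,\mathcal{L}_U U\rangle = \langle U,U\rangle\,\langle \mathcal{G}U,U\rangle - \langle U,\mathcal{G}U\rangle\,\langle U,U\rangle = \langle \mathcal{G}U,U\rangle - \langle U,\mathcal{G}U\rangle,
\end{align*}
which vanishes by self-adjointness of $\mathcal{G}$ (equivalently, by the symmetry of $\langle \mathcal{G}U,U\rangle$).

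There is no genuine obstacle here; the proof is essentially bookkeeping. The only point requiring care is keeping track of transposes when pulling matrices across the bracket $\langle\cdot,\cdot\rangle$, since the inner matrix is not symmetric in general and the positions of $Q^{\top}$ versus $Q$ in the two factoring rules must be applied consistently. The role of the assumption $U\in\mathcal{M}^{N}$ in the second statement is precisely to make $\langle U,U\rangle$ equal to the identity and hence commute with $\langle \mathcal{G}U,U\rangle$; without it, $\langle U,\mathcal{L}_U U\rangle$ reduces to the commutator $[\langle U,U\rangle,\langle \mathcal{G}U,U\rangle]$, which need not vanish.
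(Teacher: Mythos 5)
Your proof is correct and follows essentially the same direct-computation route as the paper: expand both $\langle V,\mathcal{L}_U W\rangle$ and $\langle \mathcal{L}_U V,W\rangle$ via the factoring identities, observe the pairwise cancellation, and for the second claim set $\langle U,U\rangle = I_N$ and invoke the symmetry of $\langle \mathcal{G}U,U\rangle$. Your remark that the general $\langle U,\mathcal{L}_U U\rangle$ reduces to the commutator $[\langle U,U\rangle,\langle \mathcal{G}U,U\rangle]$ is a correct and useful observation not spelled out in the paper, but otherwise the arguments coincide.
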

    \begin{proof}
    The proof is provided in Appendix~\ref{proof of lemma: skew-symmetry}. 
    \end{proof}
	
	We will see from Proposition \ref{prop: orthogonality preserving} that the operator $\mathcal{L}_U$ defined by \eqref{definition:operator L_U} guarantees that the solution of \eqref{equ: evolution problem} remains orthogonal and exhibits energy dissipation. Namely, \eqref{equ: evolution problem} is an orthogonality-preserving model whenever the initial condition is orthogonal.
	\begin{proposition}\label{prop: orthogonality preserving}
	Suppose that $U(t) \in C^1([0,T); [H_0^1(\Omega)]^N)$ with $T\in (0,+\infty]$ is the solution of (\ref{equ: evolution problem}), then for all $t\in [0, T)$,
		\begin{align*}
			\langle U(t), U(t) \rangle = I_N \qquad \text{and} \qquad \frac{\mathrm{d}E(U(t))}{\mathrm{d}t} \leqslant 0.
		\end{align*}
	\end{proposition}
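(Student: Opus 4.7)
I would split the proof into the orthogonality and energy parts, using the former as a stepping stone for the latter.

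For orthogonality, I would differentiate $\langle U, U\rangle$ using the bilinearity of the inner matrix, substitute $\frac{\mathrm{d}U}{\mathrm{d}t} = -\mathcal{L}_U U$, and apply Lemma~\ref{lem: skew-symmetry} with $V = W = U$:
\begin{align*}
\frac{\mathrm{d}}{\mathrm{d}t}\langle U, U\rangle = -\langle \mathcal{L}_U U, U\rangle - \langle U, \mathcal{L}_U U\rangle = 0.
\end{align*}
The initial condition $\langle U^0, U^0\rangle = I_N$ then gives $\langle U(t), U(t)\rangle \equiv I_N$ on $[0,T)$.

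For the energy, the orthogonality just established collapses $\mathcal{L}_U U$ to $U\langle \mathcal{G}U, U\rangle - \mathcal{G}U$. Setting $M := \langle U, U\rangle_a$ and $S := \langle \mathcal{G}U, U\rangle$ (both real symmetric; $S$ by the identity $(\mathcal{G}u,v)=(\mathcal{G}v,u)$), and using $\langle \mathcal{G}V, W\rangle_a = \langle V, W\rangle$ (a direct consequence of $a(\mathcal{G}u, v) = (u, v)$, so in particular $\langle \mathcal{G}U, U\rangle_a = I_N$), I would compute
\begin{align*}
\frac{\mathrm{d}E(U)}{\mathrm{d}t} = -\text{tr}\big(\langle \mathcal{L}_U U, U\rangle_a\big) = -\text{tr}(SM - I_N) = N - \text{tr}(SM).
\end{align*}
The task thus reduces to the trace inequality $\text{tr}(SM) \geq N$.

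To establish it, I would invoke the spectral decomposition of $-\Delta + \mathcal{V}$: with an $L^2$-orthonormal eigenbasis $\{\phi_k\}$ and positive eigenvalues $\lambda_k$, write $u_i = \sum_k c_{ik}\phi_k$. Orthonormality becomes $CC^\top = I_N$, making $P := C^\top C$ an orthogonal projection with $\text{tr}(P) = N$. A direct expansion gives $\text{tr}(SM) = \sum_{k,l}(\lambda_l/\lambda_k)P_{kl}^2$, and symmetrizing (using $P_{kl} = P_{lk}$) together with the AM--GM estimate $\lambda_l/\lambda_k + \lambda_k/\lambda_l \geq 2$ yields
\begin{align*}
\text{tr}(SM) \geq \sum_{k,l} P_{kl}^2 = \text{tr}(P^2) = \text{tr}(P) = N.
\end{align*}
The main obstacle is precisely this last inequality: orthogonality falls out algebraically from the skew-symmetry of $\mathcal{L}_U$, but energy decay is a genuinely spectral statement about the compressions of $\mathcal{G}$ and $\mathcal{G}^{-1}$ to the span of $U$, beyond what the semi-positive definiteness in Lemma~\ref{lemma:B_n exists} provides by itself.
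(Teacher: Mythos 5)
Your orthogonality argument is identical to the paper's: differentiate $\langle U,U\rangle$, apply the skew-symmetry of $\mathcal{L}_U$ from Lemma~\ref{lem: skew-symmetry}, and propagate the initial condition.

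Your energy argument is correct but takes a genuinely different route. You reduce $\frac{\mathrm{d}E}{\mathrm{d}t}\leqslant 0$ to the scalar inequality $\operatorname{tr}(SM)\geqslant N$ with $S=\langle\mathcal{G}U,U\rangle$ and $M=\langle U,U\rangle_a$, and prove it by passing to the $L^2$-orthonormal eigenbasis of $-\Delta+\mathcal{V}$, identifying the rank-$N$ projection $P=C^\top C$, and symmetrizing with AM--GM. The paper instead introduces the residual $\mathcal{T}(U)=U-\mathcal{G}U\langle\mathcal{G}U,U\rangle^{-1}\langle U,U\rangle$, uses the relation $\langle\mathcal{G}U,U'(t)\rangle_a=0$ to replace $\operatorname{tr}(\langle U,U'\rangle_a)$ by $\operatorname{tr}(\langle\mathcal{T}(U),U'\rangle_a)$, observes that $U'=-\mathcal{T}(U)\langle\mathcal{G}U,U\rangle$, and concludes
\begin{align*}
\frac{\mathrm{d}E}{\mathrm{d}t}=-\operatorname{tr}\bigl(\langle\mathcal{T}(U),\mathcal{T}(U)\rangle_a\,\langle\mathcal{G}U,U\rangle\bigr)\leqslant 0,
\end{align*}
which is manifestly nonpositive as the trace of a product of two positive semi-definite symmetric matrices. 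The paper's route is coordinate-free and, more importantly, the objects it introduces pay further rent: both $\mathcal{T}$ and the identity $\langle\mathcal{G}U,U'\rangle_a=0$ reappear in the global-existence argument of Theorem~\ref{thm:global well-posedness} and in the estimates of Theorem~\ref{thm: convergence to the ground state}. Your spectral computation is a one-off, and it would also need a sentence justifying the Fubini interchange behind the double series $\sum_{k,l}(\lambda_l/\lambda_k)P_{kl}^2$ (it is legitimate because, for $u_i\in H_0^1(\Omega)$, both $\sum_k |c_{ik}c_{jk}|/\lambda_k$ and $\sum_l\lambda_l|c_{il}c_{jl}|$ converge absolutely). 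That said, your version does make the spectral content of the dissipation explicit, and the inequality $\operatorname{tr}(SM)\geqslant N$ is an appealing statement in its own right: it quantifies the interplay between the compressions of $\mathcal{G}$ and $\mathcal{G}^{-1}$ to $\operatorname{span}(U)$, with equality precisely when that span is invariant.
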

	\begin{proof}
		First, by Lemma \ref{lem: skew-symmetry}, we have
		\begin{align*}
			\frac{\text{d}}{\text{d}t} \langle U(t), U(t) \rangle & = \langle U(t), \frac{\text{d}U}{\text{d}t} \rangle + \langle \frac{\text{d}U}{\text{d}t}, U(t) \rangle \
			\\& = - \langle U, \mathcal{L}_U U \rangle - \langle \mathcal{L}_U U, U \rangle = 0.
		\end{align*}
		This result implies that $\langle U(t), U(t) \rangle \equiv \langle U^0, U^0 \rangle = I_N$, which means that the orthogonality of $U(t)$ is preserved throughout the evolution.
		
		Next, note that 
		\begin{equation} \label{equ: 1}
			\begin{aligned}
				\langle \G U, U'(t) \rangle_a & = - \langle \G U, U \rangle_a \langle \G U , U \rangle + \langle \G U, \G U \rangle_a \langle U, U \rangle \\
				& = - \langle  U, U \rangle \langle \G U , U \rangle + \langle \G U , U \rangle \langle  U, U \rangle = 0.
			\end{aligned}
		\end{equation}
		We define the operator $\mathcal{T}: [H_0^1(\Omega)]^N \to [H_0^1(\Omega)]^N$ by
        $\mathcal{T}(U) = U - \G U \langle \G U, U \rangle^{-1}\langle U, U \rangle,\ \forall U \in [H_0^1(\Omega)]^N$. We obtain from \eqref{equ: 1} that
		\begin{align*}
			\frac{\mathrm{d}E(U(t))}{\mathrm{d}t} & = \text{tr}(\langle U, U'(t)\rangle_a)  = \text{tr} (\langle \mathcal{T}(U) , U'(t) \rangle_a) \\
			& = - \text{tr} (\langle \mathcal{T}(U) , \mathcal{T}(U)  \rangle_a \langle \G U, U \rangle) \leqslant 0,
		\end{align*}
		which completes the proof, demonstrating that the energy of the system decreases over time.
	\end{proof}
	
	The following subsections provide a mathematical analysis of the model \eqref{equ: evolution problem}, investigating its well-posedness and asymptotic behavior.
	
	\subsection{Well-posedness}
	
	We first consider the local boundedness and local Lipschitz properties of the operator $\L_U$. The proofs for these properties are provided in Appendix~\ref{proof of lem: bound property of L_U} and Appendix~\ref{proof of lem: Local Lipschitz property}, respectively.
	\begin{lemma}\label{lem: bound property of L_U}
		If $U \in [H_0^1(\Omega)]^N$, then
		\begin{align*}
			\|\L_U V \|_a \leqslant C \|U\|_a^2 \|V\|, \qquad \forall\, V \in [H_0^1(\Omega)]^N.
		\end{align*}
	\end{lemma}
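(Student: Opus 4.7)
\textbf{Proof Proposal for Lemma \ref{lem: bound property of L_U}.} The plan is to split $\mathcal{L}_U V = U\langle \mathcal{G}U, V\rangle - \mathcal{G}U\langle U, V\rangle$ into its two summands and bound each separately in the $a$-norm, using a standard trace inequality to factor out the matrix of inner products, and then estimating those matrices entry-wise via Cauchy--Schwarz together with the smoothing property $\|\mathcal{G}u\|_a \leq C\|u\|$ recorded in the excerpt.

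First I would establish the auxiliary estimate
\begin{align*}
\|WQ\|_a \leq \|W\|_a \, \|Q\|_2, \qquad \forall\, W \in [H_0^1(\Omega)]^N,\ Q \in \R^{N\times N},
\end{align*}
where $\|Q\|_2$ denotes the spectral norm. This follows from $\|WQ\|_a^2 = \operatorname{tr}(Q^\top \langle W,W\rangle_a Q)$ and the PSD identity $\operatorname{tr}(Q^\top M Q) \leq \|Q\|_2^2 \operatorname{tr}(M)$ for $M \succeq 0$. Applying this to $W = U$, $Q = \langle \mathcal{G}U, V\rangle$, and to $W = \mathcal{G}U$, $Q = \langle U,V\rangle$, reduces the proof to bounding the matrix norms of $\langle \mathcal{G}U,V\rangle$ and $\langle U,V\rangle$, and bounding $\|\mathcal{G}U\|_a$.

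Next I would control each matrix via its Frobenius norm. For $\langle U,V\rangle$, Cauchy--Schwarz in $L^2$ gives $|(u_i,v_j)| \leq \|u_i\|\,\|v_j\|$, hence $\|\langle U,V\rangle\|_2 \leq \|\langle U,V\rangle\|_F \leq \|U\|\,\|V\|$. For $\langle \mathcal{G}U,V\rangle$, I would write $|(\mathcal{G}u_i,v_j)| \leq \|\mathcal{G}u_i\|\,\|v_j\|$ and invoke the chain $\|\mathcal{G}u_i\| \leq C\|\mathcal{G}u_i\|_a \leq C\|u_i\|$ (Poincaré plus the stated property of $\mathcal{G}$), yielding $\|\langle \mathcal{G}U,V\rangle\|_2 \leq C\|U\|\,\|V\|$. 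Similarly, $\|\mathcal{G}U\|_a^2 = \sum_i \|\mathcal{G}u_i\|_a^2 \leq C\sum_i \|u_i\|^2 = C\|U\|^2$.

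Combining, and finally using Poincaré's inequality $\|U\| \leq C\|U\|_a$ to upgrade the remaining $L^2$ factors of $U$ to $a$-norm factors, I obtain
\begin{align*}
\|\mathcal{L}_U V\|_a &\leq \|U\|_a\, \|\langle \mathcal{G}U, V\rangle\|_2 + \|\mathcal{G}U\|_a\, \|\langle U,V\rangle\|_2 \\
&\leq C\|U\|_a\, \|U\|\, \|V\| + C\|U\|\, \|U\|\, \|V\| \\
&\leq C\|U\|_a^2\, \|V\|,
\end{align*}
as desired. I do not anticipate a serious obstacle here; the only point that requires a little care is keeping $\|V\|$ (not $\|V\|_a$) on the right-hand side, which is why Cauchy--Schwarz should be applied in $L^2$ and the $\mathcal{G}$-smoothing property should be used only on factors involving $U$, never on $V$.
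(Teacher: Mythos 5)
Your proposal is correct and follows essentially the same line of argument as the paper's proof: split $\mathcal{L}_U V$ into its two summands, apply Cauchy--Schwarz entrywise in $L^2$, use the smoothing bound $\|\mathcal{G}u\|_a \leqslant C\|u\|$ together with Poincar\'e on the $U$-factors, and keep $V$ in the $L^2$ norm throughout. The only difference is presentational --- you package the estimate via the auxiliary inequality $\|WQ\|_a \leqslant \|W\|_a\|Q\|_2$ and Frobenius-norm bounds on the inner-product matrices, whereas the paper carries out the same estimate componentwise on $[\mathcal{L}_U V]_i$ and then sums with H\"older --- but the underlying ideas and the dependence on $N$ through the matrix size are the same.
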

	\begin{lemma}\label{lem: Local Lipschitz property}
		If $U,V \in  [H_0^1(\Omega)]^N$ satisfy $\|U\|_a \leqslant M$ and $\|V\|_a \leqslant M$, then
		\begin{align*}
			\| \mathcal{L}_U U - \mathcal{L}_V V \|_a \leqslant C_M \|U- V\|_a,
		\end{align*}
		where $C_M$ is independent of $U,V$.
	\end{lemma}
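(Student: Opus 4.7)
The plan is to treat $\mathcal{L}_U U - \mathcal{L}_V V$ as a cubic (trilinear) expression in its arguments and apply the standard telescoping identity to concentrate the difference $U-V$ into a single factor of each summand. Since
\begin{align*}
\mathcal{L}_U U = U\langle \mathcal{G}U, U\rangle - \mathcal{G}U\langle U,U\rangle,
\end{align*}
adding and subtracting intermediate terms yields the decomposition
\begin{align*}
\mathcal{L}_U U - \mathcal{L}_V V
&= (U-V)\langle \mathcal{G}U, U\rangle + V\langle \mathcal{G}(U-V), U\rangle + V\langle \mathcal{G}V, U-V\rangle \\
&\quad - \mathcal{G}(U-V)\langle U, U\rangle - \mathcal{G}V\langle U-V, U\rangle - \mathcal{G}V\langle V, U-V\rangle,
\end{align*}
in which every summand contains exactly one factor drawn from $\{U-V,\ \mathcal{G}(U-V)\}$ and two factors drawn from $\{U,V,\mathcal{G}U,\mathcal{G}V\}$.

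Next I would estimate each of the six pieces using three elementary tools: (i) the product bound $\|WQ\|_a \leqslant \|W\|_a\|Q\|_F$ for $W\in [H_0^1(\Omega)]^N$ and $Q\in\mathbb{R}^{N\times N}$; (ii) the Cauchy--Schwarz-type bound $\|\langle X, Y\rangle\|_F \leqslant C\|X\|_a\|Y\|_a$, using the Poincaré-type control of $\|\cdot\|$ by $\|\cdot\|_a$; and (iii) the continuity estimate $\|\mathcal{G}W\|_a \leqslant C\|W\| \leqslant C\|W\|_a$ already recorded right before Lemma \ref{lemma:B_n exists}. Applied term-by-term, each summand is controlled by a product of two factors from $\{\|U\|_a,\|V\|_a\}$ times $\|U-V\|_a$. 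Invoking the hypothesis $\|U\|_a,\|V\|_a\leqslant M$ converts these products into $M^2$, yielding $\|\mathcal{L}_U U - \mathcal{L}_V V\|_a \leqslant C_M\|U-V\|_a$ with $C_M$ depending only on $M$ and $\Omega$.

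The main (and only) obstacle is clean bookkeeping—matching the correct matrix norms on the inner matrices with the correct function-space norms on the vector factors and confirming that the six telescoped terms reassemble exactly to $\mathcal{L}_U U - \mathcal{L}_V V$. Conceptually the argument is just the standard Lipschitz estimate for a polynomial nonlinearity, and since all relevant continuity properties of $\mathcal{G}$ and of the inner matrix operation are already in place, no further analytic ingredients are required.
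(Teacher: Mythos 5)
Your proposal is correct and follows essentially the same route as the paper: both telescope the trilinear expression $\mathcal{L}_U U - \mathcal{L}_V V$ so that each summand carries exactly one factor of $U-V$ (or $\mathcal{G}(U-V)$) and two bounded factors, then close using the continuity of $\mathcal{G}$ together with Poincar\'e/Cauchy--Schwarz. The paper executes the telescoping componentwise (fixing the index $i$ and summing over $j$), whereas you stay at the level of inner matrices with the bounds $\|WQ\|_a \leqslant \|W\|_a\|Q\|_F$ and $\|\langle X,Y\rangle\|_F \leqslant C\|X\|_a\|Y\|_a$, but the decomposition and estimates are the same in substance.
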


	With the help of Lemma \ref{lem: Local Lipschitz property} and the classical Picard--Lindel\"of theorem, we have the following local well-posedness of the model \eqref{equ: evolution problem}. 
	\begin{lemma}
		There exists a time $T > 0$ such that the equation \eqref{equ: evolution problem} admits a unique solution satisfying
		\begin{align*}
			U \in C^1([0,T);[H_0^1(\Omega)]^N).
		\end{align*}
	\end{lemma}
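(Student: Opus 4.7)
The plan is a direct application of the Picard--Lindel\"of theorem for ordinary differential equations in the Hilbert space $\H := [H_0^1(\Omega)]^N$ equipped with the inner product $(\cdot,\cdot)_a$. I would first rewrite \eqref{equ: evolution problem} in the abstract form $U'(t) = F(U(t))$, where $F(V) := -\mathcal{L}_V V$ is a nonlinear map from $\H$ into itself, and then verify that $F$ is both locally bounded and locally Lipschitz continuous in a neighborhood of $U^0$.

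For the two verifications I would fix any $M > \|U^0\|_a$ and restrict attention to the closed ball $\overline{B}_M := \{V \in \H : \|V\|_a \leqslant M\}$. Lemma \ref{lem: bound property of L_U}, combined with the Poincar\'e inequality (which controls $\|V\|$ in terms of $\|V\|_a$), immediately yields a uniform bound $\|F(V)\|_a \leqslant K_M$ on $\overline{B}_M$. Lemma \ref{lem: Local Lipschitz property} then supplies the companion estimate $\|F(U) - F(V)\|_a \leqslant C_M \|U - V\|_a$ for all $U,V \in \overline{B}_M$, with $C_M$ depending only on $M$.

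With these two estimates in hand I would pass to the Volterra-type integral formulation $\Phi(U)(t) = U^0 - \int_0^t \mathcal{L}_{U(s)} U(s)\,\mathrm{d}s$ and check that $\Phi$ is a strict contraction on the complete metric space
\[
X_T := \bigl\{ U \in C([0,T]; \H) : U(0) = U^0,\ \sup_{t \in [0,T]} \|U(t) - U^0\|_a \leqslant M - \|U^0\|_a \bigr\}
\]
endowed with the sup-in-$\|\cdot\|_a$ topology, provided $T$ is chosen small enough that $T K_M \leqslant M - \|U^0\|_a$ (so that $\Phi$ stabilizes $X_T$) and $T C_M < 1$ (so that $\Phi$ is a strict contraction). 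The Banach fixed-point theorem then produces a unique continuous fixed point $U$ on $[0,T]$, and the $C^1$-regularity follows from the continuity of $t \mapsto F(U(t))$ together with the fundamental theorem of calculus in Banach spaces.

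I do not anticipate any serious obstacle: the argument is entirely standard once Lemmas \ref{lem: bound property of L_U} and \ref{lem: Local Lipschitz property} are available. The only mild point worth flagging is that the bound on $F$ is cubic in $\|U\|_a$ (after applying Poincar\'e), which is harmless for local existence but does not by itself give a global conclusion at this stage; extending the local solution to a global one will eventually have to rely on the a priori orthogonality identity $\langle U(t), U(t)\rangle = I_N$ provided by Proposition \ref{prop: orthogonality preserving}.
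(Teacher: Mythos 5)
Your proposal is correct and follows essentially the same route as the paper, which proves this lemma by invoking the classical Picard--Lindel\"of theorem together with the local Lipschitz estimate of Lemma~\ref{lem: Local Lipschitz property}. You have simply unpacked that citation into the standard Banach fixed-point argument, which matches the paper's intent exactly (and your closing remark about the cubic bound and the need for the a priori orthogonality identity to globalize is precisely how the paper proceeds in Theorem~\ref{thm:global well-posedness}).
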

	
	The following lemma is a consequence of Proposition \ref{prop: orthogonality preserving}, whose proof is given in Appendix~\ref{proof of lem: uniform bound}.
	\begin{lemma}\label{lem: uniform bound}
		If $U(t)$ is the solution of \eqref{equ: evolution problem}, then the eigenvalues of $\langle \G U(t), U(t) \rangle$ are bounded for any $t\in [0,T)$, that is,
		\begin{align*}
			\lambda(\langle \G U(t), U(t) \rangle) \in [C_1, C_2] ,\qquad \forall\, t \in [0,T).
		\end{align*}
		where $C_1$ and $C_2$ are constants independent of $t$.
	\end{lemma}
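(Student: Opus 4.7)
The plan is to characterize the eigenvalues of $\langle \G U(t), U(t) \rangle$ via the Rayleigh quotient and then combine the two conclusions of Proposition~\ref{prop: orthogonality preserving}. From the listed properties of $\G$, the bilinear form $(u,v)\mapsto (\G u, v)$ is symmetric, so $\langle \G U(t), U(t)\rangle$ is a symmetric $N \times N$ matrix. Its eigenvalues therefore lie in the range of $\bm{\alpha}\mapsto \bm{\alpha}^{\top}\langle \G U(t), U(t)\rangle \bm{\alpha}$ over unit vectors $\bm{\alpha}\in\R^N$. Setting $v = U(t)\bm{\alpha}\in H_0^1(\Omega)$, the identities preceding Lemma~\ref{lemma:B_n exists} give
\begin{align*}
\bm{\alpha}^{\top}\langle \G U(t), U(t)\rangle \bm{\alpha} \;=\; (\G v, v) \;=\; (\G v, \G v)_a \;=\; \|\G v\|_a^{2}.
\end{align*}
Thus the whole question reduces to uniformly controlling $\|\G v\|_a^{2}$ from above and below whenever $v = U(t)\bm{\alpha}$ with $\|\bm{\alpha}\|=1$.

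For the upper bound, I would invoke the orthogonality statement $\langle U(t), U(t)\rangle = I_N$ from Proposition~\ref{prop: orthogonality preserving} to deduce $\|v\|^{2} = \bm{\alpha}^{\top}\bm{\alpha} = 1$, and then use the continuity estimate $\|\G v\|_a \leqslant C\|v\|$ recalled just before Lemma~\ref{lemma:B_n exists}. This immediately yields $\|\G v\|_a^{2}\leqslant C^{2}$, giving a time-independent constant $C_{2}$.

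The lower bound is the main obstacle and is where both conclusions of Proposition~\ref{prop: orthogonality preserving} must be used simultaneously. The key is to reverse the use of $\G$: from $(\G v, v)_a = (v,v) = 1$ and Cauchy--Schwarz in the $a$-inner product, one obtains $1 \leqslant \|\G v\|_a\,\|v\|_a$, so
\begin{align*}
\|\G v\|_a^{2} \;\geqslant\; \frac{1}{\|v\|_a^{2}}.
\end{align*}
It then remains to bound $\|v\|_a$ uniformly in $t$. Since $\langle U(t), U(t)\rangle_a$ is symmetric positive semi-definite,
\begin{align*}
\|v\|_a^{2} \;=\; \bm{\alpha}^{\top}\langle U(t), U(t)\rangle_a \bm{\alpha} \;\leqslant\; \lambda_{\max}\bigl(\langle U(t), U(t)\rangle_a\bigr) \;\leqslant\; \trace\bigl(\langle U(t), U(t)\rangle_a\bigr) \;=\; 2E(U(t)).
\end{align*}
Now the energy-decay part of Proposition~\ref{prop: orthogonality preserving} provides $E(U(t))\leqslant E(U^{0})$ for all $t\in[0,T)$, giving a uniform bound $\|v\|_a^{2}\leqslant 2E(U^{0})$ and therefore $C_{1} = 1/(2E(U^{0}))$.

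The subtle point, and the only nontrivial step in the argument, is recognizing the identity $(v,v)=(\G v,v)_a$ which converts the $L^{2}$-normalization of $v$ (supplied by orthogonality preservation) into a lower bound on $\|\G v\|_a$, modulated by $\|v\|_a$; the latter is then tamed precisely by the monotone energy decay. Once these two pieces are in place, both bounds are manifestly independent of $t$, completing the proof.
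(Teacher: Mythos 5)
Your proof is correct and follows essentially the same strategy as the paper's: cast the eigenvalues as Rayleigh quotients $\bm{\alpha}^{\top}\langle \G U,U\rangle\bm{\alpha}=\|\G v\|_a^2$ with $v=U\bm\alpha$, use the orthogonality-preservation part of Proposition~\ref{prop: orthogonality preserving} together with $\|\G v\|_a\leqslant C\|v\|$ for the upper bound, and use the identity $(\G v,v)_a=(v,v)=1$, Cauchy--Schwarz, and the energy-decay part to bound $\|v\|_a$ for the lower bound. The only cosmetic difference is that you isolate the inequality $\|\G v\|_a^2\geqslant 1/\|v\|_a^2$ before inserting the energy bound, while the paper chains the inequalities directly, but the underlying ideas are identical.
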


	Building on the local well-posedness and utilizing the uniform bound provided by Lemma \ref{lem: uniform bound}, we can extend the solution globally in time. This gives the following global well-posedness for \eqref{equ: evolution problem}, aligning in spirit with \cite[Theorem 3.2]{henning2020sobolev}.
	\begin{theorem}\label{thm:global well-posedness}
		The model \eqref{equ: evolution problem} admits a unique global solution
		\begin{align*}
			U \in C^1([0,\infty); [H_0^1(\Omega)]^N).
		\end{align*}
	\end{theorem}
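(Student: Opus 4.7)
The plan is to upgrade the local-in-time solution provided by the preceding local well-posedness lemma to a global one via the standard continuation argument for ODEs in Banach spaces, using the energy identity of Proposition~\ref{prop: orthogonality preserving} as the a priori bound that rules out finite-time blowup of $\|U(t)\|_a$.

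First, I would denote by $T^* \in (0, +\infty]$ the maximal existence time of the local solution. Applying Proposition~\ref{prop: orthogonality preserving} on $[0, T^*)$, I obtain $\langle U(t), U(t) \rangle = I_N$ together with $E(U(t)) \leqslant E(U^0)$. Since $E(U) = \tfrac12 \|U\|_a^2$, this yields the uniform a priori bound $\|U(t)\|_a \leqslant \|U^0\|_a =: M_0$ throughout the maximal interval. Next, I would invoke Lemma~\ref{lem: Local Lipschitz property} with $M = M_0$ to obtain a Lipschitz constant $C_{M_0}$ for the map $V \mapsto \mathcal{L}_V V$ that is uniform over the ball $\{ V \in [H_0^1(\Omega)]^N : \|V\|_a \leqslant M_0 \}$. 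The Picard–Lindelöf theorem then supplies a minimal existence time $\delta = \delta(M_0) > 0$ for every Cauchy problem whose initial datum lies in that ball. Assuming for contradiction that $T^* < \infty$, I would choose $\tau \in (T^* - \delta, T^*)$ and restart the evolution equation from $U(\tau)$: the restarted solution extends the original past $T^*$, contradicting maximality. Hence $T^* = +\infty$.

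Finally, the $C^1$-regularity of the extended solution comes for free: continuity of $U$ with values in $[H_0^1(\Omega)]^N$ together with Lemma~\ref{lem: Local Lipschitz property} ensure that $t \mapsto -\mathcal{L}_{U(t)} U(t)$ is continuous, so $U' \in C^0([0, +\infty); [H_0^1(\Omega)]^N)$. Uniqueness is inherited from local uniqueness via a standard gluing argument on overlapping intervals. The only non-automatic step, and hence the main obstacle, is securing a uniform $a$-bound on the maximal interval so that the Picard existence time does not shrink to zero as $t \to T^*$; this is precisely what Proposition~\ref{prop: orthogonality preserving} delivers, thanks to the fortuitous identity $E(U) = \tfrac12 \|U\|_a^2$. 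Lemma~\ref{lem: uniform bound}, providing two-sided bounds on $\lambda(\langle \G U(t), U(t) \rangle)$ along the flow, supplies a complementary structural estimate that will be essential for the convergence analysis of subsequent sections.
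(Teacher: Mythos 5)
Your proof is correct, and it rests on the same two pillars the paper uses—the a priori $H_0^1$-bound that Proposition~\ref{prop: orthogonality preserving} yields via $E(U(t)) = \tfrac12\|U(t)\|_a^2 \leqslant E(U^0)$, and the local well-posedness furnished by Lemma~\ref{lem: Local Lipschitz property}—but the continuation step is implemented differently. You invoke the classical uniform-existence-time argument: since the solution stays in a fixed ball $\{\|V\|_a \leqslant M_0\}$ on which the vector field $V \mapsto -\mathcal{L}_V V$ is Lipschitz (Lemma~\ref{lem: Local Lipschitz property}) and bounded (Lemma~\ref{lem: bound property of L_U} plus $\|U\| = \sqrt{N}$), the Picard existence time has a uniform positive lower bound $\delta(M_0)$, so restarting from a time within $\delta$ of $T^*$ immediately extends past $T^*$. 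The paper instead derives the explicit Cauchy-in-time estimate $\|U(t_2) - U(t_1)\|_a^2 \leqslant C(t_2 - t_1)\big(E(U(t_1)) - E(U(t_2))\big)$ from the energy dissipation identity, extracts a weak limit $U_T$ of $U(t^n)$ along $t^n \to T$, upgrades to strong convergence via a $\limsup$-$\liminf$ norm argument, and uses the uniform bound on $\|U'(t)\|_a$ to conclude that the full family $U(t)$ converges to $U_T$ as $t \to T$, which is what contradicts maximality. Both routes are valid; yours is shorter and more standard, while the paper's version produces the additional information that $U(t)$ actually has a strong $[H_0^1(\Omega)]^N$-limit at the putative blow-up time, aligning with the cited treatment in Henning--Peterseim. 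One small polish to consider: when invoking Picard--Lindelöf for a uniform $\delta$, take the Lipschitz and boundedness constants on a slightly larger ball (say radius $2M_0$) so that the local solution cannot escape the region of validity before $\delta$; with the energy dissipation keeping $\|U(t)\|_a \leqslant M_0$ this is purely cosmetic, but it makes the uniformity of $\delta$ manifest.
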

	\begin{proof}
		Assume that $T$ is finite and maximal in the sense that the problem is no longer well-posed for $t \geqslant T$. Then the energy reduction guarantees that $E_T  \stackrel{\Delta}{=}\lim\limits_{t \to T} E(U(t))$ exists. Let $t_1, t_2 \in [0,T)$ be arbitrary with $t_1 \leqslant t_2$. Then we have 
		\begin{align*}
			\| U(t_2) - U(t_1)\|_a^2 & = \left\| \int_{t_1}^{t_2} U'(t) \ \text{d}t\right\|_a^2 \leqslant \left( \int_{t_1}^{t_2} \|U'(t)\|_a \ \text{d}t \right)^2 \\
			& \leqslant (t_2 - t_1) \int_{t_1}^{t_2} \|U'(t)\|_a^2 \ \text{d}t = (t_2 - t_1) \int_{t_1}^{t_2} \text{tr}(\langle \L_UU,  \L_UU\rangle_a) \ \text{d}t, 
		\end{align*}
		which, combined with the definition of $\mathcal{T}$, leads to
		\begin{equation*}
			\begin{aligned}
				\| U(t_2) - U(t_1)\|_a^2& \leqslant (t_2 - t_1) \int_{t_1}^{t_2} \text{tr}(\langle \G U, U\rangle \langle \mathcal{T}(U) ,  \mathcal{T}(U) \rangle_a\langle \G U, U\rangle) \ \text{d}t \\
				& = (t_2 - t_1) \int_{t_1}^{t_2} \text{tr}(\langle \G U, U\rangle \langle \G U, U\rangle^{\frac12} \langle \mathcal{T}(U) ,  \mathcal{T}(U) \rangle_a\langle \G U, U\rangle^{\frac12}) \ \text{d}t \\
				& \leqslant C (t_2 - t_1) \int_{t_1}^{t_2} \text{tr}( \langle \G U, U\rangle^{\frac12} \langle \mathcal{T}(U) ,  \mathcal{T}(U) \rangle_a\langle \G U, U\rangle^{\frac12}) \ \text{d}t. 
			\end{aligned}
		\end{equation*}
		Consequently, we obtain
		\begin{equation*}
			\begin{aligned}
				\|U(t_2)-U(t_1)\|_a^2	& \leqslant C (t_2 - t_1) \int_{t_1}^{t_2} \text{tr}( \langle \mathcal{T}(U) ,  \mathcal{T}(U) \rangle_a\langle \G U, U\rangle) \ \text{d}t \\
                & =  C (t_2 - t_1) \int_{t_1}^{t_2}\frac{\text{d}E(U(t))}{\text{d}t}\ \text{d}t
				\\& = C (t_2 - t_1) (E(U(t_1)) - E(U(t_2))).
			\end{aligned}
		\end{equation*}
		Hence, there exists a sequence $\{t^n\}$ with $t^n \to T$ and a function $U_T \in [H_0^1(\Omega)]^N$ so that $U(t^n) \rightharpoonup U_T$ weakly in $[H_0^1(\Omega)]^N$. This implies 
		\begin{align*}
			\|U(t^n)\|_a & \leqslant \|U_T\|_a + \| U(t^n) - U_T\|_a  \leqslant \|U_T\|_a + \liminf_{m \to \infty} \| U(t^n) - U(t^m)\|_a \\
			& \leqslant \|U_T\|_a + C \liminf_{m \to \infty} \sqrt{(t^m - t^n) (E(U(t^n)) - E(U(t^m))) } \\
			& \leqslant  \|U_T\|_a + C \sqrt{(T  - t^n) (E(U(t^n)) - E_T) },
		\end{align*}
		which yields
		\begin{align*}
			\limsup_{n \to \infty} \|U(t^n)\|_a \leqslant \|U_T\|_a.
		\end{align*}
		Together with $\|U_T\|_a \leqslant \liminf_{n \to \infty} \|U(t^n)\|_a$, we have $\lim_{n \to \infty } \|U(t^n)\|_a = \|U_T\|_a$, which implies $U(t^n) \to U_T$ strongly in $[H_0^1(\Omega)]^N$. The boundedness of $\|U'(t)\|_a$ (which can be derived from Proposition \ref{prop: orthogonality preserving}) implies $U(t) \to U_T$ as $t \to T$. This contradicts the assumed maximality of $T$.
	\end{proof}
	
	\subsection{Asymptotic behavior}
	
	In this subsection, we investigate the asymptotic behavior of the solution $U(t)$ to the model \eqref{equ: evolution problem}. 
	
	Proposition \ref{prop: orthogonality preserving} implies the following sequential convergence.
	\begin{lemma}\label{lemma: sequence U converge}
		Suppose $U(t)$ is the solution of (\ref{equ: evolution problem}). There exists a sequence $\{t^n\}$ with $t^n \to \infty$ and $\bar{U} \in \mathcal{M}^N$ such that 
		\begin{equation*}
			\|U(t^n) -  \bar{U} \|_a \to 0 \qquad \text{as $n \to \infty$},
		\end{equation*}
		and $\bar{U} \in \mathcal{M}^N$ is a solution of \eqref{equ: model problem 1} as well as a constrained critical point of the energy $E(\cdot)$.
	\end{lemma}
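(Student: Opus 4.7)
The plan is to exploit the monotone energy decay from Proposition \ref{prop: orthogonality preserving} to extract a sequence along which a suitable residual vanishes, and then to pass to the limit using the compactness of $\G$. First, because $E(U(t))$ is monotonically non-increasing along the flow and bounded below (for instance by $E(U^*)$), the limit $E_\infty := \lim_{t\to\infty} E(U(t))$ exists. Next, I would recycle the dissipation identity derived inside the proof of Proposition \ref{prop: orthogonality preserving},
\[
-\frac{\mathrm{d}E(U(t))}{\mathrm{d}t} \;=\; \text{tr}\bigl(\langle \T(U),\T(U)\rangle_a \langle \G U,U\rangle\bigr) \;\geqslant\; 0,
\]
and integrate it on $[0,\infty)$ to obtain $\int_0^\infty \text{tr}(\langle \T(U),\T(U)\rangle_a \langle \G U,U\rangle)\,\d t \leqslant E(U^0) - E_\infty < \infty$. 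This forces the existence of a sequence $t^n \to \infty$ along which the integrand tends to $0$, and Lemma \ref{lem: uniform bound} upgrades this (via the positive lower bound on the eigenvalues of $\langle \G U(t),U(t)\rangle$) to $\|\T(U(t^n))\|_a \to 0$.

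Next I would run a standard compactness argument: the uniform bound $\|U(t)\|_a^2 = 2 E(U(t)) \leqslant 2E(U^0)$ lets me extract a subsequence, still denoted $\{t^n\}$, and $\bar U\in [H_0^1(\Omega)]^N$ with $U(t^n) \rightharpoonup \bar U$ weakly in $[H_0^1(\Omega)]^N$. The compact embedding $H_0^1(\Omega)\hookrightarrow L^2(\Omega)$ upgrades this to strong $L^2$ convergence, which can be passed to the identity $\langle U(t^n), U(t^n)\rangle = I_N$ to yield $\bar U\in \M^N$.

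To identify $\bar U$ as an eigenfunction matrix and promote the weak $[H_0^1(\Omega)]^N$ convergence to strong, I would use that $\G$ is compact from $L^2(\Omega)$ into $H_0^1(\Omega)$, so $\G U(t^n) \to \G\bar U$ strongly in $[H_0^1(\Omega)]^N$. Combined with continuity of matrix inversion---guaranteed by the uniform lower bound on $\lambda(\langle \G U(t^n), U(t^n)\rangle)$ from Lemma \ref{lem: uniform bound}---this yields the strong convergence $\G U(t^n)\langle \G U(t^n), U(t^n)\rangle^{-1} \to \G\bar U \langle \G\bar U,\bar U\rangle^{-1}$ in $[H_0^1(\Omega)]^N$. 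Since $\langle U(t^n), U(t^n)\rangle = I_N$ reduces $\T(U(t^n))$ to $U(t^n) - \G U(t^n)\langle \G U(t^n), U(t^n)\rangle^{-1}$, the previously established $\|\T(U(t^n))\|_a \to 0$ forces $U(t^n) \to \G\bar U \langle \G\bar U,\bar U\rangle^{-1}$ strongly in $[H_0^1(\Omega)]^N$, and uniqueness of the weak limit identifies $\bar U = \G\bar U\langle \G\bar U,\bar U\rangle^{-1}$. Multiplying by $\langle \G\bar U,\bar U\rangle$ and applying $(-\Delta + \mathcal{V})$ yields $-\Delta\bar U + \mathcal{V}\bar U = \bar U \Lambda$ with $\Lambda = \langle \G\bar U,\bar U\rangle^{-1}$, confirming that $\bar U$ solves \eqref{equ: model problem 1} and is a constrained critical point of $E$.

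The main obstacle I anticipate is the simultaneous upgrade of weak $H_0^1$ convergence to strong \emph{and} the identification of $\bar U$ with the nonlinear fixed-point relation $\bar U = \G\bar U \langle \G\bar U,\bar U\rangle^{-1}$. Both steps hinge crucially on Lemma \ref{lem: uniform bound}: without the uniform positive lower bound on the eigenvalues of $\langle \G U(t),U(t)\rangle$, the matrix inverse could not be safely passed to the limit, nor could the possibly degenerate weight in the dissipation integrand be absorbed to extract the vanishing of $\|\T(U(t^n))\|_a$.
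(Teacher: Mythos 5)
Your proof is correct and follows the same general strategy as the paper: integrate the energy dissipation to extract a sequence with vanishing residual, pass to a weak $[H_0^1]^N$ limit and a strong $[L^2]^N$ limit, and then upgrade to strong $[H_0^1]^N$ convergence while identifying $\bar{U}$ as a critical point.

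The one place where your route genuinely differs is the final upgrade to strong convergence and identification of $\bar{U}$. The paper works with $U'(t^n)\to 0$, passes the full right-hand side of the ODE to a \emph{weak} limit to get $\L_{\bar U}\bar U = 0$, and then recovers $\|U(t^n)\|_a\to\|\bar U\|_a$ through a separate inner-product computation to conclude strong convergence. You instead exploit $\|\T(U(t^n))\|_a\to 0$ (equivalent to $U'(t^n)\to 0$ by Lemma~\ref{lem: uniform bound}), write $U(t^n)=\T(U(t^n))+\G U(t^n)\langle\G U(t^n),U(t^n)\rangle^{-1}$, and observe that the second term converges strongly in $[H_0^1]^N$; this yields the strong convergence of $U(t^n)$ and the fixed-point identity $\bar U=\G\bar U\langle\G\bar U,\bar U\rangle^{-1}$ in a single step, via uniqueness of the weak limit. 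Both arguments rest on the same ingredients (Lemma~\ref{lem: uniform bound} for the invertibility of $\langle\G U,U\rangle$ and compactness of the embedding $H_0^1\hookrightarrow L^2$), but your version avoids the somewhat opaque norm-convergence computation in the paper. One small correction: what you actually need is not compactness of $\G$ but merely its boundedness from $L^2$ to $H_0^1$, combined with the strong $L^2$ convergence of $U(t^n)$; the compactness is already supplied by the Rellich embedding.
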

	\begin{proof}
    The proof is provided in Appendix~\ref{proof of lemma: sequence U converge}. 
    \end{proof}
 \begin{remark}
     Combining Lemma \ref{lemma: sequence U converge} and Proposition \ref{prop: orthogonality preserving}, we immediately obtain the following result:
		There holds $E(U(t)) \xrightarrow{t \to \infty} E^*$, where $E^* = E(\bar{U})$ is the energy of some constrained critical point $\bar{U} \in \mathcal{M}^N$ mentioned in Lemma \ref{lemma: sequence U converge}.
        That is, no additional conditions are needed for the model to evolve to a steady state.
 \end{remark}

We define the ground state energy $E_{\text{GS}} = \frac12\sum_{i = 1}^N \lambda_i$ (i.e., $E(U^*)$) and the first excited state energy $E_{\text{ES}} = \frac12 \left( \lambda_{N+1} - \lambda_N + \sum_{i = 1}^N \lambda_i \right)$.  
	Under suitable initial conditions, we can derive the following two lemmas.
    	\begin{lemma}\label{lemma:convergence of equivalence}
		Suppose $U(t)$ is the solution of (\ref{equ: evolution problem}). If the initial value satisfies $E_{\text{GS}} \leqslant E(U^0) < E_{\text{ES}}$, then
		\begin{align*}
			\| [U(t)] - [U^*] \|_a \to 0 \qquad \text{as $t \to \infty$},
		\end{align*}
		where $U^*$ is the solution of \eqref{equ: model problem 2}.
	\end{lemma}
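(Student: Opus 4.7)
The strategy is to combine the energy dissipation established in Proposition~\ref{prop: orthogonality preserving} with the sequential compactness from Lemma~\ref{lemma: sequence U converge} to pin down the asymptotic energy, then upgrade subsequential convergence to convergence of the full trajectory via a standard subsequence-of-subsequence argument.

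First I would identify the possible limit energies. Any constrained critical point $\bar U\in\mathcal{M}^N$ of $E$ satisfies an Euler--Lagrange equation of the form $-\Delta\bar U+\mathcal{V}\bar U=\bar U\Lambda$ with $\Lambda$ a symmetric $N\times N$ matrix; diagonalising $\Lambda$ by an orthogonal rotation shows that, up to such a rotation, the columns of $\bar U$ consist of $N$ orthonormal eigenfunctions of $-\Delta+\mathcal{V}$, so $E(\bar U)=\tfrac12\sum_{i\in I}\lambda_i$ for some index set $I\subset\mathbb{N}_+$ with $|I|=N$. Consequently, the set of critical values is discrete, its minimum is $E_{\text{GS}}$, and the next-smallest value is at least $E_{\text{ES}}$ (attained by swapping $\lambda_N$ for $\lambda_{N+1}$). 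By Proposition~\ref{prop: orthogonality preserving} the monotone limit $E_\infty:=\lim_{t\to\infty}E(U(t))$ exists, and Lemma~\ref{lemma: sequence U converge} forces $E_\infty=E(\bar U)$ for some critical point $\bar U$; since $E_\infty\leqslant E(U^0)<E_{\text{ES}}$, the only consistent possibility is $E_\infty=E_{\text{GS}}$.

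Next I would show that any sequence $s^n\to\infty$ admits a subsequence $\{s^{n_k}\}$ along which $U(s^{n_k})$ converges in $a$-norm to an element of $[U^*]$. Since $\|U(s^n)\|_a^2=2E(U(s^n))\leqslant 2E(U^0)$, one may extract a weak limit $U(s^{n_k})\rightharpoonup\tilde U$ in $[H_0^1(\Omega)]^N$. The compact embedding $H_0^1(\Omega)\hookrightarrow L^2(\Omega)$ upgrades this to strong convergence in $[L^2(\Omega)]^N$, so $\langle\tilde U,\tilde U\rangle=I_N$ and $\tilde U\in\mathcal{M}^N$. Weak lower semicontinuity of the $a$-norm gives $E(\tilde U)\leqslant\liminf_k E(U(s^{n_k}))=E_{\text{GS}}$, while $\tilde U\in\mathcal{M}^N$ implies $E(\tilde U)\geqslant E_{\text{GS}}$; hence $\tilde U$ is a ground state and $\tilde U\in[U^*]$. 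Moreover $\|U(s^{n_k})\|_a\to\sqrt{2E_{\text{GS}}}=\|\tilde U\|_a$, which combined with weak convergence in the Hilbert space $([H_0^1(\Omega)]^N,(\cdot,\cdot)_a)$ yields strong convergence in $a$-norm. Thus $\|[U(s^{n_k})]-[U^*]\|_a\leqslant\|U(s^{n_k})-\tilde U\|_a\to 0$.

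Since every subsequence of $\{U(s^n)\}$ possesses a further subsequence along which $\|[U(s^{n_{k_j}})]-[U^*]\|_a\to 0$, the full sequence satisfies $\|[U(s^n)]-[U^*]\|_a\to 0$; as $\{s^n\}$ was arbitrary, we conclude $\|[U(t)]-[U^*]\|_a\to 0$ as $t\to\infty$. The principal obstacle is the first step, namely excluding convergence to a non-ground-state critical point: this relies crucially on the discrete structure of the critical values (through the spectral gap $\lambda_{N+1}>\lambda_N$ assumed for problem \eqref{equ: model problem 1}) together with the energy threshold $E(U^0)<E_{\text{ES}}$ used to squeeze $E_\infty$ onto the ground-state level.
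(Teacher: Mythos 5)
Your proof is correct and takes essentially the same route as the paper's: both rely on the compactness argument of Lemma~\ref{lemma: sequence U converge} together with the energy threshold $E(U^0)<E_{\text{ES}}$ and the gap between $E_{\text{GS}}$ and the next critical value to pin the limiting energy at $E_{\text{GS}}$, then upgrade weak to strong $a$-convergence of subsequences onto $[U^*]$. The only difference is presentational — the paper argues by contradiction from a hypothetical sequence bounded away from $[U^*]$, whereas you use the equivalent subsequence-of-subsequences formulation, and you spell out the discrete structure of the critical values slightly more explicitly than the paper does.
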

\begin{proof}
The proof is given in Appendix~\ref{proof of lemma:convergence of equivalence}. 
\end{proof}
\begin{lemma}\label{lemma: derivate of U(t) tends to 0}
		Suppose $U(t)$ is the solution of (\ref{equ: evolution problem}). If the initial value satisfies $E_{\text{GS}} \leqslant E(U^0) < E_{\text{ES}}$, then
		\begin{align*}
			\|U'(t)\|_a \to 0 \qquad \text{as $t \to \infty$}.
		\end{align*}
	\end{lemma}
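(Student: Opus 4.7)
The plan is to combine the convergence of equivalence classes from Lemma~\ref{lemma:convergence of equivalence} with the local Lipschitz property of $U \mapsto \mathcal{L}_U U$ established in Lemma~\ref{lem: Local Lipschitz property}, after first observing that the right-hand side of \eqref{equ: evolution problem} vanishes on the entire orbit $[U^*]$, not just at $U^*$ itself.

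The first step is a short algebraic identity: for every $Q \in \mathcal{O}^N$, $\mathcal{L}_{U^*Q}(U^*Q) = 0$. Indeed, since $(U^*,\Lambda^*)$ solves \eqref{equ: model problem 1}, applying $\mathcal{G}$ gives $\mathcal{G}U^* = U^*(\Lambda^*)^{-1}$, hence $\mathcal{G}(U^*Q) = U^*(\Lambda^*)^{-1}Q$, $\langle \mathcal{G}(U^*Q), U^*Q \rangle = Q^\top (\Lambda^*)^{-1} Q$, and $\langle U^*Q, U^*Q \rangle = I_N$. Substitution into the definition \eqref{definition:operator L_U} yields exact cancellation of the two terms $U^*Q \cdot Q^\top (\Lambda^*)^{-1} Q$ and $U^*(\Lambda^*)^{-1}Q \cdot I_N$.

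The second step invokes Lemma~\ref{lemma:convergence of equivalence}, which under the hypothesis $E_{\text{GS}} \leqslant E(U^0) < E_{\text{ES}}$ delivers $\|[U(t)] - [U^*]\|_a \to 0$. By compactness of $\mathcal{O}^N$, the infimum in the definition of $\|[U(t)] - [U^*]\|_a$ is attained at some $Q(t) \in \mathcal{O}^N$, so $\|U(t) - U^*Q(t)\|_a \to 0$. To apply the local Lipschitz estimate I need a uniform $[H_0^1(\Omega)]^N$-bound on both $U(t)$ and $U^*Q(t)$: the former follows from the energy dissipation of Proposition~\ref{prop: orthogonality preserving} via $\|U(t)\|_a^2 = 2E(U(t)) \leqslant 2E(U^0)$, and the latter from the trace identity $\|U^*Q(t)\|_a^2 = \mathrm{tr}(Q(t)^\top \langle U^*, U^*\rangle_a Q(t)) = \|U^*\|_a^2$. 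Fixing such a uniform $M$ and using step one to replace $0$ by $\mathcal{L}_{U^*Q(t)}(U^*Q(t))$, Lemma~\ref{lem: Local Lipschitz property} gives
$$\|U'(t)\|_a = \|\mathcal{L}_{U(t)}U(t) - \mathcal{L}_{U^*Q(t)}(U^*Q(t))\|_a \leqslant C_M \|U(t) - U^*Q(t)\|_a \longrightarrow 0,$$
which is the claim.

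I do not anticipate a serious technical obstacle, since the argument mainly repackages previously established lemmas. The only spot deserving care is the first-step cancellation: one must contract the matrix factors $Q^\top(\Lambda^*)^{-1}Q$ and $(\Lambda^*)^{-1}Q$ in the correct order after multiplication by $U^*Q$ and $U^*$ respectively, and this relies crucially on $\langle U^*, U^* \rangle = I_N$ so that $(\Lambda^*)^{-1}$ commutes past the identity in the middle.
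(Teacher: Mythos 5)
Your proof is correct and follows essentially the same strategy as the paper: both hinge on Lemma~\ref{lemma:convergence of equivalence} providing an orthogonal $Q(t)$ with vanishing distance to the ground-state orbit, together with the observation that the right-hand side of \eqref{equ: evolution problem} vanishes on the entire orbit $[U^*]$. The only cosmetic difference is that you right-multiply $U^*$ by $Q(t)$ and explicitly invoke the local Lipschitz estimate of Lemma~\ref{lem: Local Lipschitz property}, whereas the paper right-multiplies $U(t)$ by $Q(t)$, uses the equivariance $\mathcal{L}_{U}(U)Q = \mathcal{L}_{UQ}(UQ)$, and passes to the limit directly by continuity of the nonlinear right-hand side.
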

	\begin{proof}
    The proof is deferred to Appendix~\ref{proof of lemma: derivate of U(t) tends to 0}. 
    \end{proof}

	Building on Lemmas \ref{lemma:convergence of equivalence} and \ref{lemma: derivate of U(t) tends to 0}, we establish the exponential convergence of both the solution and the energy in the following theorem.
	\begin{theorem}\label{thm: convergence to the ground state}
		Suppose $U(t)$ is the solution of (\ref{equ: evolution problem}). If the initial value satisfies $E_{\text{GS}} \leqslant E(U^0) < E_{\text{ES}}$, then for any small constant $\epsilon\in (0, \frac{1}{\lambda_N} - \frac{1}{\lambda_{N+1}})$ , there exist a finite time $T_{\epsilon}>0$ and a constant $C_{\epsilon}>0$ such that 
		\begin{align}\label{equ: energy exponential convergence}
			E(U(t)) - E_{\text{GS}} \leqslant C_{\epsilon} \exp\left( -  2\left( \frac{1}{\lambda_N} - \frac{1}{\lambda_{N+1}} - \epsilon \right) t\right), \quad \forall t \geqslant T_{\epsilon},
		\end{align}
		and there exists a $Q^*\in \mathcal{O}^N$ dependent on $t$ such that
		\begin{align}\label{equ: function exponential convergence}
			\| U(t) - U^*Q^*\|_a \leqslant C_{\epsilon} \exp\left( -  \left( \frac{1}{\lambda_N} - \frac{1}{\lambda_{N+1}} - \epsilon \right) t\right), \quad \forall t \geqslant T_{\epsilon},
		\end{align}
        where $U^*$ is the solution of \eqref{equ: model problem 2}.
	\end{theorem}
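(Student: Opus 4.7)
The plan is to reduce the theorem to a Gronwall-type differential inequality for the energy excess $E(U(t)) - E_{\text{GS}}$ valid in a neighborhood of the orbit $[U^*]$, and then transfer the resulting exponential decay to the function-level bound \eqref{equ: function exponential convergence} via a local coercivity estimate. By Lemma \ref{lemma:convergence of equivalence}, $\|[U(t)] - [U^*]\|_a \to 0$; by compactness of $\mathcal{O}^N$ the infimum defining this distance is attained at some $Q^*(t) \in \mathcal{O}^N$, so for any $\delta > 0$ there is $T_\delta$ with $\|U(t) - U^*Q^*(t)\|_a < \delta$ for all $t \geq T_\delta$. After post-composing by an orthogonal matrix, I may choose the representative of $[U^*]$ whose columns coincide with the $L^2$-orthonormal eigenfunctions $e_1, \ldots, e_N$ of $-\Delta + \mathcal{V}$ associated with $\lambda_1, \ldots, \lambda_N$, and let $\{e_j\}_{j \geq 1}$ denote the full eigenbasis.

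Next, I would write $U(t) = U^*Q^*(t) + W(t)$ and expand each column in the eigenbasis as $W_k = \sum_j b_{kj}(t) e_j$. Because $Q^*(t)$ minimizes $\|U(t) - U^*Q\|_a$ over $\mathcal{O}^N$, the first-order variation in antisymmetric infinitesimal rotations of $Q$ vanishes, and together with the orthonormality $\langle U, U\rangle = I_N$ this forces the parallel coefficients $b_{kl}$ (with $k,l \leq N$) to be of second order in the transverse coefficients $\{b_{kj}\}_{k \leq N,\, j > N}$. A careful Taylor expansion then gives
\begin{align*}
E(U) - E_{\text{GS}} &= \frac{1}{2}\sum_{k \leq N,\, j > N} b_{kj}^2 (\lambda_j - \lambda_k) + O(\|W\|_a^3),\\
\mathcal{T}(U) &= W^\perp - \mathcal{G}W^\perp\, \Lambda^* + O(\|W\|_a^2),\\
\langle \mathcal{G}U, U\rangle &= \Lambda^{*-1} + O(\|W\|_a^2),
\end{align*}
where $W^\perp$ is the projection of $W$ onto $\spanned(e_{N+1}, e_{N+2}, \ldots)$. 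Substituting into the identity $-dE/dt = \text{tr}(\langle \mathcal{T}(U), \mathcal{T}(U)\rangle_a \langle \mathcal{G}U, U\rangle)$ from the proof of Proposition \ref{prop: orthogonality preserving} and simplifying the matrix trace yields
\begin{align*}
-\frac{dE}{dt} = \sum_{k \leq N,\, j > N} b_{kj}^2\, \frac{(\lambda_j - \lambda_k)^2}{\lambda_j \lambda_k} + O(\|W\|_a^3).
\end{align*}

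The sharp spectral-gap factor then drops out of the elementary identity
\begin{align*}
\frac{(\lambda_j - \lambda_k)^2}{\lambda_j \lambda_k} = (\lambda_j - \lambda_k)\Bigl(\frac{1}{\lambda_k} - \frac{1}{\lambda_j}\Bigr) \geq (\lambda_j - \lambda_k)\Bigl(\frac{1}{\lambda_N} - \frac{1}{\lambda_{N+1}}\Bigr), \qquad k \leq N < j,
\end{align*}
which upgrades the previous line to $-dE/dt \geq 2\bigl(\frac{1}{\lambda_N} - \frac{1}{\lambda_{N+1}}\bigr)(E - E_{\text{GS}}) + O(\|W\|_a^3)$. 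The coercivity
\begin{align*}
E(U) - E_{\text{GS}} \geq \frac{1}{2}\bigl(1 - \lambda_N/\lambda_{N+1}\bigr)\|W\|_a^2 + O(\|W\|_a^3),
\end{align*}
which follows from $\lambda_j - \lambda_k \geq \lambda_j(1 - \lambda_N/\lambda_{N+1})$ for $k \leq N < j$, lets me absorb the cubic remainder into $\epsilon(E - E_{\text{GS}})$ once $\|W\|_a$ is small enough, i.e.\ for $t \geq T_\epsilon$ with $T_\epsilon$ suitably chosen. Gronwall's inequality then delivers \eqref{equ: energy exponential convergence}, and taking square roots in the same coercivity bound (using $\|U(t) - U^*Q^*(t)\|_a = \|W(t)\|_a$) yields \eqref{equ: function exponential convergence}.

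The main obstacle is carrying out the Taylor expansions uniformly and gauge-invariantly, with explicit control over the higher-order remainders. Although the optimizing $Q^*(t)$ varies with $t$, only gauge-invariant quantities (namely $E$, the weighted norm $\text{tr}(\langle \mathcal{T}(U), \mathcal{T}(U)\rangle_a \langle \mathcal{G}U, U\rangle)$, and the spectrum of $\langle \mathcal{G}U, U\rangle$) enter the final differential inequality, so the analysis descends cleanly to the quotient $\mathcal{M}^N/\mathcal{O}^N$; any remaining ambiguity from degeneracies within $\{\lambda_1, \ldots, \lambda_N\}$ is absorbed by the freedom in choosing the representative of $[U^*]$, and the non-degeneracy assumption $\lambda_N < \lambda_{N+1}$ is precisely what ensures $\frac{1}{\lambda_N} - \frac{1}{\lambda_{N+1}} > 0$.
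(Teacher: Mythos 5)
Your proposal is correct, and it takes a genuinely different route from the paper.

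The paper works with the auxiliary quantity $g(t) = \tfrac12\|U'(t)\|_a^2$ and establishes a Gronwall inequality $g'(t)\leqslant -2\bigl(\tfrac{1}{\lambda_N}-\tfrac{1}{\lambda_{N+1}}-\epsilon\bigr)g(t)$ by differentiating $g$ once more, splitting the result into $I_1$ and $I_2$, and bounding $I_1$ through two spectral estimates: a lower bound $\lambda_{\min}(\langle\G U,U\rangle)\to \tfrac{1}{\lambda_N}$ (from Lemma~\ref{lemma:convergence of equivalence}), and a $\liminf$ argument showing $\|U'\|_a^2/\|U'\|^2\to$ something $\geqslant\lambda_{N+1}$, since the limit direction of $U'$ is forced into the orthogonal complement of $\spanned(U^*)$; the perturbation $I_2$ is absorbed into $\epsilon$ using $\|\mathcal{T}(U)\|_a\to 0$ from Lemma~\ref{lemma: derivate of U(t) tends to 0}. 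The energy and function bounds \eqref{equ: energy exponential convergence}--\eqref{equ: function exponential convergence} are then recovered from the decay of $g$ by integrating in time. You instead derive a Gronwall inequality for $E(U(t))-E_{\text{GS}}$ directly: you gauge-fix via the nearest-point projection $Q^*(t)$, expand $U=U^*Q^*+W$ in the eigenbasis, and observe that the energy excess and the dissipation $-dE/dt = \text{tr}(\langle\mathcal{T}(U),\mathcal{T}(U)\rangle_a\langle\G U,U\rangle)$ are, to leading order, the same weighted $\ell^2$-sum over the transverse coefficients $b_{kj}$ $(k\leqslant N<j)$, with weights $\lambda_j-\lambda_k$ and $(\lambda_j-\lambda_k)^2/(\lambda_j\lambda_k)$ respectively; the elementary pointwise identity $\tfrac{(\lambda_j-\lambda_k)^2}{\lambda_j\lambda_k}=(\lambda_j-\lambda_k)\bigl(\tfrac{1}{\lambda_k}-\tfrac{1}{\lambda_j}\bigr)\geqslant(\lambda_j-\lambda_k)\bigl(\tfrac{1}{\lambda_N}-\tfrac{1}{\lambda_{N+1}}\bigr)$ then gives the sharp rate, and the coercivity of the same quadratic form absorbs the cubic remainder and furnishes the transfer to \eqref{equ: function exponential convergence}. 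Each approach has advantages: the paper's avoids any eigenbasis expansion and any gauge-fixing, at the cost of the somewhat delicate $C_{\text{inf}}$ argument and of bounding $I_2$; yours makes the origin of the rate $2\bigl(\tfrac{1}{\lambda_N}-\tfrac{1}{\lambda_{N+1}}\bigr)$ completely explicit and gets the energy and function estimates simultaneously from one coercivity bound, at the cost of bookkeeping higher-order remainders and arguing (as you do) that all relevant quantities are gauge-invariant so that the possible non-uniqueness/non-smoothness of $Q^*(t)$ is harmless. Both rely on Lemma~\ref{lemma:convergence of equivalence} to enter the small-$\|W\|_a$ regime. Your expansion of $E(U)-E_{\text{GS}}$ is in fact exact (not merely to leading order) once one uses $\langle U^*,W\rangle_{\text{sym}}=-\tfrac12\langle W,W\rangle$ from orthonormality, which slightly simplifies the remainder tracking; it is worth noting this if you write the argument out in full.
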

	\begin{proof}
		Denote $g(t) = \frac12 \|U'(t)\|_a^2$. Then,
		\begin{align*}
			g'(t) & = \text{tr}(\langle U'(t), U''(t) \rangle_a)= \text{tr}\Big(\left\langle U'(t), \big( -\mathcal{T}(U)  \langle \G U, U \rangle \big)' \right\rangle_a \Big) \\
			& = \underbrace{ - \text{tr}\Big(\left\langle U'(t), \big(\mathcal{T}(U)\big)'  \rangle_a\right\langle \G U, U \rangle \Big)}_{=: I_1} \underbrace{- \text{tr}\Big(\langle U'(t), \mathcal{T}(U)   \rangle_a \big(\langle \G U, U \rangle\big)'\Big)}_{=:I_2}.
		\end{align*}
		In the following, estimates for $I_1$ and $I_2$ will be provided. 
        
        Since the derivative of $\mathcal{T}(U)$ satisfies
		\begin{align*}
			\big(\mathcal{T}(U)\big)' = U' - \G U' \langle \G U, U \rangle^{-1} \langle U, U \rangle - \G U\big(\langle \G U, U \rangle^{-1} \langle U, U \rangle\big)',
		\end{align*}
		and by leveraging the key relation (\ref{equ: 1}), we further obtain
		\begin{align*}
			I_1 = - \text{tr}(\langle U'(t), U'(t)  \rangle_a\langle \G U, U \rangle  ) + \|U'\|^2.
		\end{align*}
		
		Note that 
		\begin{align*}
			\text{tr}(\langle U'(t), U'(t)  \rangle_a\langle \G U, U \rangle  ) \geqslant \lambda_{\text{min}}(\langle \G U, U \rangle) \|U'(t)\|_a^2
		\end{align*}
		and for all $t \geqslant 0$, there exists a $Q(t) \in \mathcal{O}^N$ such that
		\begin{align*}
			Q(t)^{\top} \langle \G U, U \rangle Q(t) \to \langle \G U^{*}, U^* \rangle \qquad \text{as $t \to \infty$}.
		\end{align*} 
		Since $U^*$ is the ground state, we have 
		\begin{align*}
			\lambda_{\text{min}}(\langle \G U^*, U^* \rangle) = \frac{1}{\lambda_N}.
		\end{align*}
		Furthermore, for all $\epsilon \in (0, \frac{1}{\lambda_N} - \frac{1}{\lambda_{N+1}})$ there exists a time $T_{\epsilon}$ such that for $t \geqslant T_{\epsilon}$,
		\begin{align*}
			\lambda_{\text{min}}(\langle \G U, U \rangle) \geqslant \frac{1}{\lambda_N} - \epsilon.
		\end{align*}
		
		To estimate $\|U'\|_a^2$, denote $C_{\text{inf}} \stackrel{\Delta}{=} \liminf_{t \to \infty} \frac{\|U'\|_a^2}{\|U'\|^2}$.
		Then, there exists a sequence $\{t^n\}$ such that 
		\begin{align*}
			C_{\text{inf}} = \lim_{n \to \infty}\frac{\|U'(t^n)\|_a^2}{\|U'(t^n)\|^2}.
		\end{align*}
		Let $Z^n  \stackrel{\Delta}{=} U'(t^n)/\|U'(t^n)\|$. Since $\{Z^n\}$ is a bounded sequence in $[H_0^1(\Omega)]^N$, there exists $\hat{Z} \in [H_0^1(\Omega)]^N$ such that $Z^n \rightharpoonup \hat{Z}$ weakly in $[H_0^1(\Omega)]^N$. Using the fact that $U(t^n)Q(t^n) \to U^*$ strongly in $[H_0^1(\Omega)]^N$ and $\langle U'(t^n), U(t^n) \rangle = 0$, we obtain that $\langle \hat{Z}, U^* \rangle = 0$. This implies that for $i = 1, \cdots, N$,
		\begin{align*}
			\| [\hat{Z}]_i\|_a^2 \geqslant \lambda_{N+1} \| [\hat{Z}]_i\|^2,
		\end{align*}
		which yields $\|\hat{Z}\|_a^2 \geqslant \lambda_{N+1} \|\hat{Z}\|^2 = \lambda_{N+1}$. The weak convergence of $\left\{Z^n\right\}$ then implies
		\begin{align*}
			C_{\text{inf}} = \lim_{n \to \infty} \|Z^n\|_a^2 \geqslant \|\hat{Z}\|_a^2 \geqslant \lambda_{N+1}.
		\end{align*}
		
		Therefore, for $t \geqslant T_{\epsilon}$, $I_1$ can be estimated as
		\begin{align*}
			I_1 & \leqslant - \left( \frac{1}{\lambda_N} - \epsilon \right) \|U'(t)\|_a^2 + \frac{1}{\lambda_{N+1}}  \|U'(t)\|_a^2 
		  = - 2 \left( \frac{1}{\lambda_N} - \frac{1}{\lambda_{N+1}} - \epsilon \right) g(t).
		\end{align*}
		
		For $I_2$, we first note that $\|\mathcal{T}(U) \|_a = \|U'\langle \G U, U \rangle^{-1}\|_a \to 0$. Furthermore, since
		\begin{align*}
			[\langle \G U, U \rangle]' & = \langle U', \G U \rangle + \langle \G U,  U' \rangle,
		\end{align*}
		it follows that
		\begin{align*}
			|[\langle \G U, U \rangle]'| \leqslant C \|U'\|_a\|U\| \leqslant C \|U'\|_a.
		\end{align*}
		Consequently, for $t \geqslant T_{\epsilon}$, $I_2$ can be estimated as
		\begin{align*}
			|I_2| \leqslant C \|U'\|_a \|\mathcal{T}(U) \|_a \|U'\|_a \leqslant 2 \epsilon g(t).
		\end{align*}
		
		Combining the estimates of $I_1$ and $I_2$ yields
		\begin{align*}
			g'(t) \leqslant - 2 \left( \frac{1}{\lambda_N} - \frac{1}{\lambda_{N+1}} - \epsilon \right)g(t), \quad \forall t \geqslant T_{\epsilon}.
		\end{align*}
		 Applying the Gronwall's inequality leads to
		\begin{equation}\label{equ: estimate of g}
			g(t) \leqslant C_{\epsilon} \exp\left( -  2\left( \frac{1}{\lambda_N} - \frac{1}{\lambda_{N+1}} - \epsilon \right) t\right), \quad  \forall t \geqslant T_{\epsilon},
		\end{equation}
        where $C_{\epsilon}>0$ is a constant dependent on $\epsilon$.
		
		Finally, \eqref{equ: energy exponential convergence} directly follows from \eqref{equ: estimate of g} and Proposition \ref{prop: orthogonality preserving}, while \eqref{equ: function exponential convergence} directly follows from \eqref{equ: estimate of g} and Lemma \ref{lemma: sequence U converge}.
	\end{proof}
	
	\section{Time discretization}\label{section:Time discretization}
	
	In this section, we propose an orthogonality-preserving numerical method for the model \eqref{equ: evolution problem}. The proposed scheme is designed to simulate the behavior of continuous solutions, and is proven to preserve orthogonality of the solution throughout the time evolution.
	
	\subsection{Numerical scheme}
	
	Let $\left\{t_n: n=0,1,2 \cdots\right\} \subset[0,+\infty)$ be discrete points such that
	$$
	0=t_0<t_1<t_2<\cdots<t_n<\cdots,
	$$
	and $\lim _{n \rightarrow+\infty} t_n=+\infty$. Set
	$$
	\tau_n=t_{n+1}-t_n,
	$$
	and consider the following scheme: Given $U^n$ with $\langle U^n,U^n \rangle  = I_N$, find $U^{n+1}$ such that
	\begin{equation}
		\label{equ: numerical scheme}
		\begin{aligned}
			\frac{U^{n+1} - U^n}{ \tau_n }  = - \mathcal{L}_{U^n} \frac{U^{n+1} + U^n}{2},
		\end{aligned}
	\end{equation}
	where $U^{n + \frac12} = (U^{n+1} + U^n)/2$.
	
	We denote $A^n =  \left\langle  \mathcal{G} U^n, U^{n + \frac12} \right\rangle$ and $B^n  = \left\langle U^n , U^{n + \frac12} \right\rangle$. Thus, $U^{n+1}$ can be obtained by 
	\begin{align}\tag{\ref{equ: numerical scheme}$*$}	
		U^{n+1} = U^n - \tau_n U^n A^n + \tau_n \mathcal{G} U^n B^n.
	\end{align}

	The following theorem gives the well-posedness of the numerical scheme \eqref{equ: numerical scheme}.
	\begin{theorem}\label{thm: orthogonality of Un}
		The numerical scheme \eqref{equ: numerical scheme} is well-posed, i.e., $U^{n+1}$ is well-defined. Moreover, 
		\begin{align*}
			\langle U^{n+1}, U^{n+1} \rangle = \langle U^{n}, U^{n} \rangle = I_N.
		\end{align*}
	\end{theorem}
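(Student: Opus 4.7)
The plan is to split the proof into two independent parts: (i) establishing existence and uniqueness of $U^{n+1}$ (well-posedness), and (ii) showing orthogonality is preserved by induction on $n$.

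For well-posedness, I would start from the equivalent form $(\ref{equ: numerical scheme}*)$, which expresses $U^{n+1} = U^n - \tau_n U^n A^n + \tau_n \mathcal{G} U^n B^n$ where $A^n, B^n \in \mathbb{R}^{N\times N}$ depend implicitly on $U^{n+1}$ through $U^{n+1/2}$. Substituting this ansatz into the definitions $A^n = \langle \mathcal{G}U^n, U^{n+1/2}\rangle$ and $B^n = \langle U^n, U^{n+1/2}\rangle$ and using $\langle U^n, U^n\rangle = I_N$ together with the self-adjointness of $\mathcal{G}$ yields a closed linear system in the matrix pair $(A^n, B^n)$:
\begin{align*}
\bigl(I_N + \tfrac{\tau_n}{2}\langle \mathcal{G}U^n, U^n\rangle\bigr) A^n - \tfrac{\tau_n}{2}\langle \mathcal{G}U^n, \mathcal{G}U^n\rangle B^n &= \langle \mathcal{G}U^n, U^n\rangle, \\
\tfrac{\tau_n}{2} A^n + \bigl(I_N - \tfrac{\tau_n}{2}\langle \mathcal{G}U^n, U^n\rangle\bigr) B^n &= I_N.
\end{align*}
Eliminating $A^n$ reduces this to a single equation $\bigl(I_N + \tfrac{\tau_n^2}{4}\bigl(\langle \mathcal{G}U^n, \mathcal{G}U^n\rangle - \langle \mathcal{G}U^n, U^n\rangle^2\bigr)\bigr) B^n = I_N$. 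By Lemma~\ref{lemma:B_n exists}, the bracketed matrix is symmetric positive semidefinite, so the coefficient matrix is symmetric positive definite and hence invertible. This gives a unique $B^n$, then $A^n$, and therefore a unique $U^{n+1}$ belonging to $[H_0^1(\Omega)]^N$.

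For orthogonality preservation, I would proceed by induction, assuming $\langle U^n, U^n\rangle = I_N$ and showing $\langle U^{n+1}, U^{n+1}\rangle = I_N$. The key algebraic identity is the polarization formula
\begin{equation*}
\langle U^{n+1}, U^{n+1}\rangle - \langle U^n, U^n\rangle = \langle U^{n+1}-U^n, U^{n+1/2}\rangle + \langle U^{n+1/2}, U^{n+1}-U^n\rangle,
\end{equation*}
which follows from expanding $U^{n+1/2} = (U^{n+1}+U^n)/2$. Substituting the scheme $U^{n+1}-U^n = -\tau_n \mathcal{L}_{U^n} U^{n+1/2}$ gives
\begin{equation*}
\langle U^{n+1}, U^{n+1}\rangle - \langle U^n, U^n\rangle = -\tau_n\bigl(\langle \mathcal{L}_{U^n} U^{n+1/2}, U^{n+1/2}\rangle + \langle U^{n+1/2}, \mathcal{L}_{U^n} U^{n+1/2}\rangle\bigr),
\end{equation*}
which vanishes by the skew-symmetry asserted in Lemma~\ref{lem: skew-symmetry}. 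The induction base is the assumption $\langle U^0, U^0\rangle = I_N$, completing the proof.

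The main obstacle is Step~1: the implicit scheme is posed on the infinite-dimensional space $[H_0^1(\Omega)]^N$, yet the solvability must be reduced to a finite-dimensional question to be concretely verifiable. The critical observation making this reduction clean is that $\mathcal{L}_{U^n}$ takes values in $\spanned(U^n, \mathcal{G}U^n)$, so the unknown is captured entirely by the two $N \times N$ matrices $A^n, B^n$; once that is recognized, Lemma~\ref{lemma:B_n exists} delivers invertibility without any CFL-type restriction on $\tau_n$. The orthogonality step itself is a short consequence of the midpoint structure combined with the skew-symmetry of $\mathcal{L}_{U^n}$ already proved in the continuous setting.
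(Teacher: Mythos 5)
Your proof is correct and follows essentially the same route as the paper: well-posedness is reduced to inverting $I_N + \frac{\tau_n^2}{4}\bigl(\langle \mathcal{G}U^n,\mathcal{G}U^n\rangle - \langle \mathcal{G}U^n,U^n\rangle^2\bigr)$ via Lemma~\ref{lemma:B_n exists}, and orthogonality follows from the midpoint structure and Lemma~\ref{lem: skew-symmetry}. Your polarization identity makes the cancellation explicit, whereas the paper extracts the conclusion by noting $\langle U^{n+1},U^{n+1}\rangle - \langle U^n,U^n\rangle$ is simultaneously symmetric and skew-symmetric; this is a cosmetic difference, not a different method.
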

	\begin{proof}
		The existence of $B_n$ depends on the invertibility of the matrix 
		$$
		I_N + \frac{\tau_n^2}{4} \langle  \mathcal{G}U^n, \mathcal{G} U^n \rangle -  \frac{\tau_n^2}{4} \langle  \mathcal{G}U^n,  U^n \rangle \langle \mathcal{G}U^n,  U^n \rangle.
		$$
		From Lemma \ref{lemma:B_n exists}, we directly establish the well-posedness of the numerical scheme \eqref{equ: numerical scheme}.
		
		It remains to prove that $\langle U^{n+1}, U^{n+1} \rangle = I_N$. Note that
		\begin{align*}
			\langle U^{n+1} + U^{n}, U^{n+1} - U^{n} \rangle = \langle U^{n+1}, U^{n+1} \rangle  - \langle U^{n+1}, U^{n} \rangle + \langle U^{n}, U^{n+1} \rangle - \langle U^{n}, U^{n} \rangle
		\end{align*}
		is skew-symmetric. Therefore, $\langle U^{n+1}, U^{n+1} \rangle - \langle U^{n}, U^{n} \rangle$ is also skew-symmetric. On the other hand, $\langle U^{n+1}, U^{n+1} \rangle - \langle U^{n}, U^{n} \rangle$ is symmetric. Consequently, $\langle U^{n+1}, U^{n+1} \rangle - \langle U^{n}, U^{n} \rangle = 0$, which completes the proof.
	\end{proof}

	Rewrite (\ref{equ: numerical scheme}) and $U^{n + \frac12} = (U^{n+1} + U^n)/2$ into two equations:
	\begin{equation*}
		\left\{
		\begin{aligned}
			&\frac{U^{n+\frac12} - U^n}{ \tau_n/2 } = - U^n A^n+ \mathcal{G} U^nB^n,\\
			&\frac{ U^{n+1}-U^{n+\frac12}}{ \tau_n/2 } = - U^n A^n+ \mathcal{G} U^nB^n.
		\end{aligned}
		\right.
	\end{equation*}
	From the first equation, we obtain
	\begin{align*}
		U^{n+\frac12} = U^n - \frac{\tau_n}{2} U^n A^n + \frac{\tau_n}{2} \mathcal{G} U^n B^n.
	\end{align*}
	Substituting this expression into the definitions of $A^n$ and $B^n$ yields
	\begin{align*}
		A^n & = \langle \mathcal{G}U^n, U^n \rangle - \frac{\tau_n}{2}\langle \mathcal{G}U^n, U^n \rangle A^n + \frac{\tau_n}{2} \langle \mathcal{G}U^n, \mathcal{G} U^n \rangle B^n, \\
		B^n & = \langle U^n, U^n \rangle - \frac{\tau_n}{2}\langle U^n, U^n \rangle A^n + \frac{\tau_n}{2} \langle \mathcal{G}U^n, U^n \rangle B^n.
	\end{align*}
	Rearranging these equations leads to
	\begin{align*}
		\frac{\tau_n}{2} A^n & = I_N - B^n + \frac{\tau_n}{2} \langle \mathcal{G}U^n, U^n \rangle B^n, \\
		I_N & = \left( I_N + \frac{\tau_n^2}{4} \langle \mathcal{G}U^n, \mathcal{G} U^n \rangle - \frac{\tau_n^2}{4} \langle \mathcal{G}U^n, U^n \rangle \langle \mathcal{G}U^n, U^n \rangle \right) B^n.
	\end{align*}
	
	Therefore, $U^{n+1}$ can be produced through the following four sub-steps.
	\begin{itemize}
		\item \textbf{Step 1}: Compute $N$ source problems independently 
		\begin{align*}
			( - \Delta + \mathcal{V} ) \mathcal{G} U^n = U^n.
		\end{align*}
		\item \textbf{Step 2}: Compute $B^n$ from 
		\begin{align*}
			I_N  = \left( I_N + \frac{\tau_n^2}{4} \langle  \mathcal{G}U^n, \mathcal{G} U^n \rangle -  \frac{\tau_n^2}{4} \langle  \mathcal{G}U^n,  U^n \rangle \langle \mathcal{G}U^n,  U^n \rangle \right) B^n.
		\end{align*}
		\item \textbf{Step 3}: Compute $A^n$ by
		\begin{align*}
			\frac{\tau_n}{2} A^n  = I_N - B^n +  \frac{\tau_n}{2} \langle \mathcal{G}U^n, U^n \rangle B^n.
		\end{align*}
		\item \textbf{Step 4}: Update the iterate
		\begin{align*}
			U^{n+1} = U^n - \tau_n U^n A^n + \tau_n \mathcal{G} U^n B^n.
		\end{align*}
	\end{itemize}
	
	The complete iteration process is summarized in Algorithm \ref{alg:Discretization scheme}.
	\begin{algorithm}[H]
		\label{alg:Discretization scheme}
		\caption{}
		\begin{algorithmic}[1]
			\STATE Given tolerance $\epsilon>0$, bounds
			$\tau_{\min},\tau_{\max}>0$, and initial data
			$U^{0}\in\mathcal{M}^{N}$; set $n=0$ and
			$\mathrm{err}_{E}^0=|E(U^{0})|$;
			\WHILE{$\mathrm{err}_{E}^n>\epsilon$}
			\STATE Choose a time step $\tau_n\in[\tau_{\min},\tau_{\max}]$;
			\STATE Execute \textbf{Step 1 $\sim$ Step 4} to obtain $U^{n+1}$;
			\STATE Compute
			$\mathrm{err}_{E}^n=
			|E(U^{n+1})-E(U^n)|\ /\ |E(U^n)|$,
			and set $n= n+1$;
			\ENDWHILE
		\end{algorithmic}
	\end{algorithm}
   This algorithm avoids the need for implicit solves, rendering each iteration straightforward and parallelizable. Additionally, the subsequent analysis in the infinite-dimensional space demonstrates that the time step is not subject to any CFL conditions.
	
	\subsection{Convergence}
	
	In this subsection, we will show the convergence of the numerical solutions. 
	
	We begin by establishing the energy decay property.
	\begin{theorem}\label{thm: Energy E(Un) decay}
		For $\tau_n \leqslant \tau^*$, here $\tau^*$ only depends on $E(U^0)$, the numerical scheme \eqref{equ: numerical scheme} is energy dissipative, that is, for all $n \in \mathbb{N}_+$,
		\begin{align*}
			E(U^{n+1}) \leqslant E(U^n) \leqslant E(U^0).
		\end{align*}
		In particular,
		\begin{align*}
			E(U^n) - E(U^{n+1}) \geqslant C \tau_n \| \mathcal{L}_{U^n}U^{n}\|_a^2,
		\end{align*}
		where $C>0$ is a constant independent of $n\in \mathbb{N}_+$.
	\end{theorem}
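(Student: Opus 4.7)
The plan is to isolate the dominant dissipation from higher-order corrections in $E(U^n) - E(U^{n+1}) = \tau_n (\mathcal{L}_{U^n} U^{n+\frac12}, U^{n+\frac12})_a$. Using the four-step decomposition of the algorithm, I would first derive the compact representation
\[
U^{n+\frac12} = \bigl( U^n - \tfrac{\tau_n}{2} P \bigr) B^n, \qquad B^n = \bigl( I_N + \tfrac{\tau_n^2}{4} K \bigr)^{-1},
\]
with $P = \mathcal{L}_{U^n} U^n$ and $K = \langle \mathcal{G} U^n, \mathcal{G} U^n \rangle - \langle \mathcal{G} U^n, U^n \rangle^2$ the positive semidefinite matrix from Lemma \ref{lemma:B_n exists}. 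A short calculation using $\langle U^n, P \rangle = 0$ gives the key identity $\mathcal{L}_{U^n} P = -U^n K$, and together with the linearity $\mathcal{L}_{U^n}(VQ) = (\mathcal{L}_{U^n} V) Q$ this delivers $\mathcal{L}_{U^n} U^{n+\frac12} = (P + \tfrac{\tau_n}{2} U^n K) B^n$. Writing $A = \langle \mathcal{G} U^n, U^n \rangle$ and $G = \langle U^n, U^n \rangle_a$, the target expands to $\tau_n \text{tr}((B^n)^2 M)$ with
\[
M = \langle P, U^n \rangle_a - \tfrac{\tau_n}{2} \langle P, P \rangle_a + \tfrac{\tau_n}{2} K G - \tfrac{\tau_n^2}{4} K \langle U^n, P \rangle_a.
\]

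Three observations then drive the estimate. First, coercivity of the leading term: expanding $U^n$ in the eigenbasis $\{w_k\}$ of $-\Delta + \mathcal{V}$ gives $G = C \Lambda C^\top$ and $A = C \Lambda^{-1} C^\top$ with $C C^\top = I_N$, which realizes the $2 \times 2$ block matrix with diagonal blocks $G, A$ and off-diagonal blocks $I_N$ as the conjugation by $\diag(C,C)$ of an evidently positive semidefinite infinite block matrix; the Schur complement of its $G$-block yields $A \succeq G^{-1}$, equivalently $A^{1/2} G A^{1/2} \succeq I_N$, so that $(P, U^n)_a = \text{tr}(A^{1/2} G A^{1/2} - I_N)$ and $\|P\|_a^2 = \text{tr}(A(A^{1/2} G A^{1/2} - I_N)) \leq \lambda_{\max}(A) (P, U^n)_a$, giving $(P, U^n)_a \geq \lambda_1 \|P\|_a^2$ via the min--max bound $\lambda_{\max}(A) \leq 1/\lambda_1$. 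Second, $\tfrac{\tau_n}{2} \text{tr}(KG) \geq 0$ since both factors are positive semidefinite, so this term only helps. Third and crucially, a direct computation shows $\langle P, P \rangle = K$, so $\|K\|_F \leq \text{tr}(K) = \|P\|^2 \leq C \|P\|_a^2$; this forces every $K$-containing correction---the $\tfrac{\tau_n^2}{4} K \langle U^n, P \rangle_a$ term in $M$ and the perturbation $(B^n)^2 - I_N = O(\tau_n^2 K)$---to scale like $O(\tau_n^2 \|P\|_a^3)$ rather than the naive $O(\tau_n^2 \|P\|_a)$. Assembling these, together with the a priori bound $\|P\|_a \leq C(E(U^0))$ from Lemma \ref{lem: bound property of L_U}, I obtain for $\tau_n$ below a threshold $\tau^*$ depending only on $E(U^0)$
\[
\bigl( \mathcal{L}_{U^n} U^{n+\frac12}, U^{n+\frac12} \bigr)_a \geq \bigl( \lambda_1 - \tfrac{\tau_n}{2} - C \tau_n^2 \|P\|_a \bigr) \|P\|_a^2 \geq \tfrac{\lambda_1}{4} \|P\|_a^2,
\]
which delivers the announced step-level inequality.

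The global assertion $E(U^{n+1}) \leq E(U^n) \leq E(U^0)$ will then follow by induction: assuming $E(U^k) \leq E(U^0)$ for $k \leq n$, the bound $\|U^n\|_a^2 = 2 E(U^n) \leq 2 E(U^0)$ keeps all constants (in particular $\tau^*$) independent of $n$, and the step-level inequality propagates the bound to $E(U^{n+1})$. The main obstacle I anticipate is the tight interplay between the coercivity $(P, U^n)_a \geq \lambda_1 \|P\|_a^2$ (which demands the Schur-complement lemma) and the sharpened bound $\|K\|_F = O(\|P\|_a^2)$ arising from $\langle P, P \rangle = K$: without the latter, the $\tfrac{\tau_n^2}{4} K \langle U^n, P \rangle_a$ correction would scale only as $O(\tau_n^2 \|P\|_a)$ and could overwhelm the leading $\|P\|_a^2$ whenever $\|P\|_a$ is small, preventing a uniform choice of $\tau^*$.
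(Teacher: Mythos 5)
Your proposal is correct and would yield the theorem, but it follows a genuinely different route from the paper. The paper writes $E(U^n)-E(U^{n+1}) = \tau_n\,\mathrm{tr}\bigl(\langle \L_{U^n}U^{n+\frac12},\,U^{n+\frac12}\rangle_a\bigr)$ and splits both arguments by adding and subtracting $\L_{U^n}U^n$ and $U^n$, producing four cross terms $I_1,\dots,I_4$; the dominant term $I_4 = (\L_{U^n}U^n, U^n)_a$ is rewritten as $\mathrm{tr}\bigl(\langle \L_{U^n}U^n,\L_{U^n}U^n\rangle_a\,\langle\G U^n,U^n\rangle^{-1}\bigr)$ using the $a$-orthogonality $\langle\L_{U^n}U^n,\G U^n\rangle_a = \langle\L_{U^n}U^n,U^n\rangle = 0$ from Lemma~\ref{lem: skew-symmetry}, while $I_1,I_2,I_3$ (and the further splitting of $I_2$ into $E_1,E_2$) are each bounded by $C\tau_n\|\L_{U^n}U^n\|_a^2$ via the local Lipschitz estimate of Lemma~\ref{lem: Local Lipschitz property} together with $\|U^{n+\frac12}-U^n\|_a\leqslant C\tau_n\|\L_{U^n}U^n\|_a$. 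You instead invert the algorithmic structure to write $U^{n+\frac12}$ and $\L_{U^n}U^{n+\frac12}$ in closed form as $(U^n - \tfrac{\tau_n}{2}P)B^n$ and $(P + \tfrac{\tau_n}{2}U^nK)B^n$, and then estimate the single trace directly. Your two structural observations---the Schur-complement inequality $A\succeq G^{-1}$ (equivalently the Gram positivity of $(U^n,\G U^n)$ in the $a$-inner product, which recovers exactly the paper's $I_4 = \mathrm{tr}(\langle P,P\rangle_a A^{-1}) \geqslant \lambda_1\|P\|_a^2$ by another route) and the identity $\langle P,P\rangle = K$ forcing $\|K\|_F = O(\|P\|_a^2)$---are genuine pieces of structure that the paper never isolates. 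Your route avoids the Lipschitz lemma and arguably makes the source of dissipation more transparent, at the cost of additional matrix bookkeeping that your sketch glosses over: $M$ is not symmetric (so $\mathrm{tr}((B^n)^2 M)$ must be symmetrized, and the leading part $AG-I_N$ has to be handled under symmetrization against $(B^n)^2$), and while $B^n$ commutes with $K$ it does not commute with $G$ or $A$, so the sign of the perturbed $KG$-term requires the argument $\mathrm{tr}((B^n)^2 KG)=\mathrm{tr}(K^{\frac12}(B^n)^2K^{\frac12}G)\geqslant 0$ rather than a bare appeal to $\mathrm{tr}(KG)\geqslant 0$. These are fixable and the route is sound, but the paper's additive decomposition is shorter because it outsources all the matrix algebra to a single Lipschitz bound.
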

	\begin{proof}
		First, we observe that the conclusion holds for $n = 0$. Now, suppose the conclusion is true for $n$. We will show that it also holds for $n+1$.
		
		Begin by noting that
		\begin{align*}
			2 \big(E(U^{n}) - E(U^{n+1})\big) & = \text{tr}(\langle U^n, U^n \rangle_a) - \text{tr}(\langle U^{n+1}, U^{n+1} \rangle_a) \\
			& = \text{tr}(\langle U^n - U^{n+1}, U^n + U^{n+1} \rangle_a ).
		\end{align*}
		This can be rewritten as 
		\begin{align*}
			E(U^{n}) - E(U^{n+1}) & = \text{tr}(\langle U^n - U^{n+1}, U^{n+\frac12} \rangle_a )  = \tau_n \text{tr}(\langle \mathcal{L}_{U^n}U^{n+\frac12}, U^{n+\frac12} \rangle_a ) \\
			& = \tau_n \text{tr}(\langle \mathcal{L}_{U^n}U^{n+\frac12} - \mathcal{L}_{U^n}U^{n} + \mathcal{L}_{U^n}U^{n}, U^{n+\frac12}  - U^n + U^n\rangle_a ). 
		\end{align*}
		As a result, we can divide $E(U^n)-E(U^{n+1})$ as follows
		\begin{equation*}
			\begin{aligned}
				E(U^n)-E(U^{n+1})& = \tau_n \underbrace{\text{tr}(\langle \mathcal{L}_{U^n}U^{n+\frac12} - \mathcal{L}_{U^n}U^{n} , U^{n+\frac12}  - U^n \rangle_a )}_{=:I_1} \\
				& \quad + \tau_n \underbrace{ \text{tr}(\langle \mathcal{L}_{U^n}U^{n+\frac12} - \mathcal{L}_{U^n}U^{n} ,  U^n\rangle_a )}_{=:I_2} \\
				& \quad + \tau_n \underbrace{ \text{tr}( \langle\mathcal{L}_{U^n}U^{n}, U^{n+\frac12}  - U^n \rangle_a )}_{=:I_3}  + \tau_n \underbrace{\text{tr}(\langle  \mathcal{L}_{U^n}U^{n},   U^n\rangle_a )}_{=:I_4}.
			\end{aligned}
		\end{equation*}
		Next, we estimate each of the terms $I_1, I_2, I_3,$ and $I_4$. First, observe that
		\begin{align*}
			\| \mathcal{L}_{U^n}U^{n+\frac12} \|_a & \leqslant \| \mathcal{L}_{U^n}U^{n+\frac12} - \mathcal{L}_{U^n}U^{n} \|_a + \| \mathcal{L}_{U^n}U^{n} \|_a \\
			& \leqslant C \| U^{n+\frac12} - U^n \|_a +  \| \mathcal{L}_{U^n}U^{n} \|_a \\
			& \leqslant C \tau_n \| \mathcal{L}_{U^n}U^{n+\frac12}\|_a +  \| \mathcal{L}_{U^n}U^{n} \|_a,
		\end{align*}
		which implies $\| \mathcal{L}_{U^n}U^{n+\frac12} \|_a \leqslant C \| \mathcal{L}_{U^n}U^{n} \|_a$. Therefore, for $I_1$, it follows that
		\begin{align*}
			|I_1| & \leqslant C \| \mathcal{L}_{U^n}U^{n+\frac12} - \mathcal{L}_{U^n}U^{n} \|_a \| U^{n+\frac12} - U^n \|_a   \leqslant C \| U^{n+\frac12} - U^n \|_a^2 \\
			& \leqslant C \tau_n^2 \| \mathcal{L}_{U^n}U^{n+\frac12}\|_a^2 \leqslant C \tau_n^2 \| \mathcal{L}_{U^n}U^{n}\|_a^2.
		\end{align*}
		Similarly, for $I_3$, we obtain 
		\begin{align*}
			|I_3| & \leqslant C \| \mathcal{L}_{U^n}U^{n}\|_a \| U^{n+\frac12} - U^n \|_a  \leqslant C \tau_n \| \mathcal{L}_{U^n}U^{n}\|_a^2.
		\end{align*}
		And $I_4$ is given as
		\begin{align*}
			I_4 & = \text{tr}(\langle  \mathcal{L}_{U^n}U^{n},   U^n - \G U^n \langle \G U^n, U^n \rangle^{-1} \rangle_a + \langle  \mathcal{L}_{U^n}U^{n},   \G U^n \langle \G U^n, U^n \rangle^{-1} \rangle_a ) \\
			& = \text{tr}(\langle  \mathcal{L}_{U^n}U^{n},   U^n - \G U^n \langle \G U^n, U^n \rangle^{-1} \rangle_a  ) \\
			& = \text{tr}(\langle  \mathcal{L}_{U^n}U^{n},   \mathcal{L}_{U^n}U^{n}  \rangle_a  \langle \G U^n, U^n \rangle^{-1}).
		\end{align*}
		To estimate $I_2$, we decompose it as
		\begin{align*}
			I_2 & = \text{tr}(\langle \mathcal{L}_{U^n}U^{n+\frac12} - \mathcal{L}_{U^n}U^{n} ,  U^n\rangle_a ) \\
			& = \underbrace{\text{tr}(\langle \mathcal{L}_{U^n}U^{n+\frac12} - \mathcal{L}_{U^n}U^{n} ,  U^n - \G U^n \langle \G U^n, U^n \rangle^{-1} \rangle_a )}_{=:E_1} \\
			& \quad + \underbrace{\text{tr}(\langle \mathcal{L}_{U^n}U^{n+\frac12} - \mathcal{L}_{U^n}U^{n} ,   \G U^n \langle \G U^n, U^n \rangle^{-1} \rangle_a )}_{=: E_2}.
		\end{align*}
		Note that 
		\begin{align*}
			\langle  \mathcal{L}_{U^n}U^{n+1}, U^n \rangle = \langle U^{n+1}, \G U^n \rangle I_N - \langle U^{n+1}, U^n \rangle \langle \G U^n , U^n \rangle
		\end{align*}
		and 
		\begin{align*}
			\langle  U^{n+1}, \mathcal{L}_{U^n} U^n \rangle = \langle U^{n+1},  U^n \rangle \langle \G U^n , U^n \rangle - \langle U^{n+1}, \G U^n \rangle I_N ,
		\end{align*}
		which implies
		\begin{align*}
			\langle  \mathcal{L}_{U^n}U^{n+1}, U^n \rangle = - \langle  U^{n+1}, \mathcal{L}_{U^n} U^n \rangle.
		\end{align*}
		Then, for $E_1$, we have 
		\begin{align*}
			|E_1| & \leqslant C \| \mathcal{L}_{U^n}U^{n+\frac12} - \mathcal{L}_{U^n}U^{n}\|_a \| \mathcal{L}_{U^n}U^{n}\|_a \\
			& \leqslant \| U^{n+\frac12} - U^{n}\|_a\| \mathcal{L}_{U^n}U^{n}\|_a \\
			& \leqslant C \tau_n\| \mathcal{L}_{U^n}U^{n+\frac12}\|_a \| \mathcal{L}_{U^n}U^{n}\|_a \leqslant C \tau_n \| \mathcal{L}_{U^n}U^{n}\|_a^2,
		\end{align*}
		and for $E_2$, we obtain 
		\begin{align*}
			E_2 & = \text{tr}(\langle \mathcal{L}_{U^n}U^{n+\frac12} - \mathcal{L}_{U^n}U^{n} ,   \G U^n \langle \G U^n, U^n \rangle^{-1} \rangle_a ) \\
			& = \text{tr}(\langle \mathcal{L}_{U^n}U^{n+\frac12} - \mathcal{L}_{U^n}U^{n} ,    U^n  \rangle\langle \G U^n, U^n \rangle^{-1} ) \\
			& = \text{tr}(-\langle U^{n+\frac12} - U^{n} ,    \mathcal{L}_{U^n} U^n  \rangle\langle \G U^n, U^n \rangle^{-1} ),
		\end{align*}
		which implies 
		\begin{align*}
			|E_2| & \leqslant C \|U^{n+\frac12} - U^{n}\|_a  \| \mathcal{L}_{U^n}U^{n}\|_a \\
			& \leqslant C \tau_n \| \mathcal{L}_{U^n}U^{n+\frac12}\|_a \| \mathcal{L}_{U^n}U^{n}\|_a \leqslant C \tau_n \| \mathcal{L}_{U^n}U^{n}\|_a^2.
		\end{align*}
		Therefore, we have $I_1 \leqslant C \tau_n \| \mathcal{L}_{U^n}U^{n}\|_a^2$.
        
		Combining the estimates for $I_1, I_2, I_3$ and $I_4$, we conclude
		\begin{align*}
			E(U^n) - E(U^{n+1}) \geqslant \tau_n \text{tr}(\langle  \mathcal{L}_{U^n}U^{n},   \mathcal{L}_{U^n}U^{n}  \rangle_a  \langle \G U^n, U^n \rangle^{-1}) - C \tau_n^2 \| \mathcal{L}_{U^n}U^{n}\|_a^2.
		\end{align*}
		Hence, for $\tau_n \leqslant \tau^*$, where $\tau^*$ depends only on $E(U^0)$, there holds
		\begin{align*}
			E(U^n) - E(U^{n+1}) \geqslant C \tau_n \| \mathcal{L}_{U^n}U^{n}\|_a^2.
		\end{align*}
		Consequently,
		\begin{align*}
			E(U^{n+1}) \leqslant E(U^n) \leqslant E(U^0),
		\end{align*}
		which completes the proof by induction.
	\end{proof}
	
	Let us assume that the time step satisfies: 
	\begin{align*}
		\tau_{\text{min}} \leqslant \tau_n \leqslant \tau_{\text{max}},\qquad \forall n \geqslant 0,
	\end{align*}
	where $	0<\tau_{\text{min}} \leqslant \tau_{\text{max}}\leqslant \tau^*$.
	
	\begin{lemma}\label{lem: converge to barU}
		Any weak limit $\bar{U}$ of $\{U^n\}$ is a solution of \eqref{equ: model problem 0}. Moreover, there exists a subsequence of $\{U^n\}$ such that $\bar{U}$ is its strong limit. 
	\end{lemma}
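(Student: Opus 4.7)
The plan is to extract from the discrete energy estimate in Theorem~\ref{thm: Energy E(Un) decay} that the residual $\mathcal{L}_{U^n}U^n$ vanishes along a subsequence, and then to pass to the limit in the identity
\begin{equation*}
\mathcal{L}_{U^n}U^n \;=\; U^n\langle\G U^n, U^n\rangle - \G U^n,
\end{equation*}
which is simply the definition of $\mathcal{L}_{U^n}U^n$ together with $\langle U^n,U^n\rangle = I_N$. Summing the bound $E(U^n) - E(U^{n+1}) \geqslant C\tau_n\|\mathcal{L}_{U^n}U^n\|_a^2$ and using that $\{E(U^n)\}$ is monotonically decreasing and bounded below (for instance by $E_{\text{GS}}$) yields $\sum_n \tau_n\|\mathcal{L}_{U^n}U^n\|_a^2<\infty$. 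Combined with $\tau_n \geqslant \tau_{\min}>0$, this forces $\|\mathcal{L}_{U^n}U^n\|_a\to 0$.

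Since $\|U^n\|_a^2 = 2E(U^n)\leqslant 2E(U^0)$ is uniformly bounded, any weak limit $\bar{U}$ of $\{U^n\}$ arises as a weak $[H_0^1(\Omega)]^N$ limit along some subsequence $\{U^{n_k}\}$. Rellich compactness then gives $U^{n_k}\to\bar{U}$ strongly in $[L^2(\Omega)]^N$, and the continuity $\|\G u\|_a\leqslant C\|u\|$ upgrades this to $\G U^{n_k}\to\G \bar{U}$ strongly in $[H_0^1(\Omega)]^N$. The orthogonality $\langle U^{n_k}, U^{n_k}\rangle = I_N$ passes through the strong $L^2$ convergence, so $\bar{U}\in\mathcal{M}^N$, and the matrices $\langle\G U^{n_k}, U^{n_k}\rangle$ converge to $\langle\G\bar{U},\bar{U}\rangle$. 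The limit matrix is symmetric positive definite: for any nonzero $\bm{\alpha}\in\R^N$,
\begin{equation*}
\bm{\alpha}^\top\langle\G\bar{U},\bar{U}\rangle\bm{\alpha} \;=\; (\G(\bar{U}\bm{\alpha}), \bar{U}\bm{\alpha}) \;=\; \|\G(\bar{U}\bm{\alpha})\|_a^2 > 0,
\end{equation*}
since $\bar{U}\bm{\alpha}\neq 0$ because the columns of $\bar{U}$ are $L^2$-orthonormal. In particular $\langle\G\bar{U},\bar{U}\rangle$ is invertible.

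Passing to the limit in the identity above yields $\bar{U}\langle\G\bar{U},\bar{U}\rangle = \G\bar{U}$. Applying $(-\Delta+\mathcal{V})$ and inverting the matrix factor gives $(-\Delta+\mathcal{V})\bar{U} = \bar{U}\langle\G\bar{U},\bar{U}\rangle^{-1}$, and diagonalising the symmetric positive definite matrix $\langle\G\bar{U},\bar{U}\rangle^{-1}$ by an orthogonal similarity transformation recasts the identity in the form of \eqref{equ: model problem 0}, so $\bar{U}$ (up to a choice of orthogonal representative in its equivalence class) solves the eigenvalue problem.

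For the strong convergence along $\{U^{n_k}\}$, I would rewrite the identity as
\begin{equation*}
U^{n_k} \;=\; \bigl(\G U^{n_k} + \mathcal{L}_{U^{n_k}}U^{n_k}\bigr)\langle\G U^{n_k}, U^{n_k}\rangle^{-1},
\end{equation*}
where the inverse exists for large $k$ because the matrices converge to an invertible limit, and then pass to the limit using $\G U^{n_k}\to\G\bar{U}$ strongly in $[H_0^1(\Omega)]^N$ together with $\|\mathcal{L}_{U^{n_k}}U^{n_k}\|_a\to 0$. I expect the most delicate step to be exactly this strong-convergence upgrade: the weak convergence of $\{U^{n_k}\}$ alone is not enough, and the mechanism is the ``division'' by $\langle\G U^{n_k}, U^{n_k}\rangle$, which is legitimate only because $\bar{U}\in\mathcal{M}^N$ forces the limit matrix to be invertible while the regularising operator $\G$ converts strong $L^2$ convergence into strong $H_0^1$ convergence.
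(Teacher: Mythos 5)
Your proof is correct and follows the same skeleton as the paper's for the first claim: the telescoping sum of the energy-decay estimate of Theorem~\ref{thm: Energy E(Un) decay}, together with $\tau_n\geqslant\tau_{\min}>0$, yields $\|\mathcal{L}_{U^n}U^n\|_a\to 0$; uniform boundedness plus Rellich compactness then lets you pass to the limit in the residual identity $\mathcal{L}_{U^n}U^n = U^n\langle\G U^n,U^n\rangle - \G U^n$, giving $\bar U\langle\G\bar U,\bar U\rangle = \G\bar U$ and hence the eigenvalue equation after an orthogonal change of basis. Where you genuinely diverge is in the strong-convergence upgrade. The paper argues indirectly: it observes that $\langle U^n,\mathcal{L}_{U^n}U^n\rangle_a\to 0 = \langle\bar U,\mathcal{L}_{\bar U}\bar U\rangle_a$, deduces $\|U^n\|_a\to\|\bar U\|_a$ (this hinges on the same invertibility of $\langle\G\bar U,\bar U\rangle$ that you verify explicitly, though the paper leaves it silent), and then invokes the standard fact that weak convergence plus convergence of norms implies strong convergence in a Hilbert space. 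You instead solve the residual identity for $U^{n_k}$, writing
\begin{equation*}
U^{n_k} = \bigl(\G U^{n_k} + \mathcal{L}_{U^{n_k}}U^{n_k}\bigr)\langle\G U^{n_k},U^{n_k}\rangle^{-1},
\end{equation*}
and pass to a direct strong $H_0^1$ limit term by term, using that $\G$ promotes strong $L^2$ convergence to strong $H_0^1$ convergence and that the matrix factor converges to an invertible limit. Your route is more constructive and keeps the role of the smoothing operator and of the nondegeneracy of $\langle\G\bar U,\bar U\rangle$ explicit; the paper's is marginally shorter once one accepts the Radon--Riesz argument. Both are complete and correct.
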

	\begin{proof}
    The proof has been deferred to Appendix~\ref{proof of lem: converge to barU}. 
    \end{proof}

	\begin{theorem}\label{thm: convergence of Un}
		If the initial value $U^0$ satisfies $E_{\text{GS}} \leqslant E(U^0) < E_{\text{ES}}$, then
		\begin{align*}
		    E(U^n)-E_{\text{GS}} &\rightarrow 0 \quad \text{ as } n \rightarrow \infty,\\
		\left\|[U^n] - [U^*] \right\|_a &\rightarrow 0\quad \text{ as } n \rightarrow \infty.
		\end{align*}
	\end{theorem}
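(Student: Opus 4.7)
The plan is to combine the monotone energy decay from Theorem~\ref{thm: Energy E(Un) decay} with the compactness/identification result in Lemma~\ref{lem: converge to barU}, and then rule out every limit except the ground state using the spectral gap hypothesis $E(U^0) < E_{\text{ES}}$.

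First I would establish that $\|\mathcal{L}_{U^n}U^n\|_a \to 0$. Theorem~\ref{thm: Energy E(Un) decay} yields $E(U^{n+1}) \leqslant E(U^n) \leqslant E(U^0)$, and since $E(U^n) \geqslant E_{\text{GS}}$ by the variational characterization \eqref{equ: model problem 2}, the sequence $\{E(U^n)\}$ is monotone and bounded below, hence converges to some $E^{\infty} \in [E_{\text{GS}}, E(U^0)]$. Telescoping the lower bound
\[
E(U^n) - E(U^{n+1}) \geqslant C\tau_n \|\mathcal{L}_{U^n}U^n\|_a^2
\]
and using $\tau_n \geqslant \tau_{\min} > 0$ yields $\sum_n \|\mathcal{L}_{U^n}U^n\|_a^2 < \infty$, which forces $\|\mathcal{L}_{U^n}U^n\|_a \to 0$. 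Combined with the uniform bound $\|U^n\|_a^2 = 2E(U^n) \leqslant 2E(U^0)$, this reproduces exactly the hypothesis used in Lemma~\ref{lem: converge to barU}.

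Next I would identify the limit. By Lemma~\ref{lem: converge to barU} there is a subsequence $U^{n_k} \to \bar{U}$ strongly in $[H_0^1(\Omega)]^N$ with $\bar{U}$ a solution of \eqref{equ: model problem 0}. By continuity of the energy and the fact that the full sequence $\{E(U^n)\}$ converges, $E(\bar{U}) = E^{\infty}$. The key remaining point is to upgrade this to $\bar{U} \in [U^*]$. Since $\bar{U}$ solves \eqref{equ: model problem 0}, each column of $\bar{U}$ is an eigenfunction for some eigenvalue of $-\Delta + \mathcal{V}$, and $E(\bar{U}) = \tfrac{1}{2}\sum_{i=1}^N \mu_i$ where the $\mu_i$ are the associated eigenvalues (counted with multiplicity). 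If these are \emph{not} exactly $\lambda_1, \dots, \lambda_N$, then at least one $\mu_i \geqslant \lambda_{N+1}$, and hence $E(\bar{U}) \geqslant \tfrac12(\lambda_1 + \cdots + \lambda_{N-1} + \lambda_{N+1}) = E_{\text{ES}}$. But the monotone energy decay and the initial-energy hypothesis give $E(\bar{U}) = E^{\infty} \leqslant E(U^0) < E_{\text{ES}}$, a contradiction. Therefore $E^{\infty} = E_{\text{GS}}$ and $\bar{U} \in [U^*]$, which proves the first assertion.

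Finally, I would upgrade subsequential convergence to convergence of the full sequence $\{[U^n]\}$ by a standard subsequence-of-subsequences argument. Suppose for contradiction that $\|[U^n] - [U^*]\|_a \not\to 0$; then there is $\delta > 0$ and a subsequence $\{U^{n_j}\}$ with $\|[U^{n_j}] - [U^*]\|_a \geqslant \delta$ for all $j$. Because $\{U^{n_j}\}$ still satisfies $\|\mathcal{L}_{U^{n_j}}U^{n_j}\|_a \to 0$ and has bounded $[H_0^1(\Omega)]^N$-norm, Lemma~\ref{lem: converge to barU} applies again and produces a sub-subsequence $U^{n_{j_\ell}} \to \tilde{U}$ strongly, with $\tilde{U}$ a solution of \eqref{equ: model problem 0} and $E(\tilde{U}) = E^{\infty} = E_{\text{GS}}$. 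By the same spectral-gap argument as above, $\tilde{U} \in [U^*]$, so $\|[U^{n_{j_\ell}}] - [U^*]\|_a \to 0$, contradicting the lower bound $\delta$. The main obstacle I anticipate is the clean spectral-gap step that forces the identified limit into $[U^*]$; everything else is a fairly direct combination of the already-proved energy dissipation estimate with the compactness lemma.
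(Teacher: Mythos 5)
Your proposal is correct and follows essentially the same route as the paper: monotone energy decay together with the compactness/identification Lemma~\ref{lem: converge to barU}, followed by a subsequence argument for convergence of $[U^n]$. The one place you add genuine content is the explicit spectral-gap step: the paper's proof simply asserts ``we obtain $E(U^n)\to E_{\text{GS}}$'' from Lemma~\ref{lem: converge to barU} and Theorem~\ref{thm: Energy E(Un) decay}, whereas you supply the missing link by observing that any solution of \eqref{equ: model problem 0} with energy below $E_{\text{ES}}$ must use exactly the first $N$ eigenvalues (so $E(\bar U)=E_{\text{GS}}$) — otherwise a $\mu_i\geqslant\lambda_{N+1}$ would push the energy up to at least $E_{\text{ES}}$. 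This is exactly the argument the paper implicitly relies on, so making it explicit strengthens the exposition. For the second claim, the paper instead goes through weak lower semi-continuity of $E$ plus convergence of $\|U^n\|_a$ to upgrade weak to strong convergence, while you re-invoke the strong subsequential convergence built into Lemma~\ref{lem: converge to barU}; both are routine and equivalent. One small precision worth noting (which mirrors a similar looseness in the paper's statement of Lemma~\ref{lem: converge to barU}): $\mathcal L_{\bar U}\bar U=0$ gives $(-\Delta+\mathcal V)\bar U=\bar U\,\langle\mathcal G\bar U,\bar U\rangle^{-1}$ with a symmetric, not necessarily diagonal, matrix on the right, so the individual columns of $\bar U$ are eigenfunctions only after an orthogonal rotation; this does not affect the energy $E(\bar U)=\tfrac12\sum_i\mu_i$ (the $\mu_i$ being the eigenvalues of $\langle\mathcal G\bar U,\bar U\rangle^{-1}$), so your conclusion stands.
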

	\begin{proof}
		By the conclusion of Lemma \ref{lem: converge to barU} and the energy decay property in Theorem \ref{thm: Energy E(Un) decay}, we obtain that $E(U^n) \to E_{\text{GS}}$, which proves the first claim.
		
		To prove the second claim, we adopt a proof by contradiction. Suppose there exists a sequence $\{U^n\}$ such that $\|U^n - U^* Q_n\|_a \geqslant \epsilon$ for some $\epsilon > 0$. For this sequence, we have $U^n \rightharpoonup \bar{U}$ for some $\bar{U} \in [H_0^1(\Omega)]^N$ with $\langle \bar{U},\bar{U} \rangle = I_N$. Then, by the lower semi-continuity of $E$, it follows that
		\begin{equation*}
			E(\bar{U}) \leqslant \liminf_{n \to \infty} E(U^n) = E_{\text{GS}}.
		\end{equation*}
		This implies that $\bar{U}$ is one of the ground states, i.e., $\bar{U}\in [U^*]$, and $\|U^n\|_a \to \|\bar{U}\|_a$. Together with the weak convergence, we obtain $\|U^n - \bar{U}\|_a \to 0$, which contradicts the assumption that $\|U^n - U^* Q_n\|_a \geqslant \epsilon$. Therefore, the second claim holds.
	\end{proof}
	
	\subsection{Convergence rate}
	
	In the preceding subsection, we established the convergence of the numerical solutions generated by our scheme (\ref{equ: numerical scheme}). In this subsection, we further demonstrate that the numerical solutions exhibit the property of exponential convergence. Moreover, we find a fact that is consistent with the theoretical result in the model (\ref{equ: evolution problem}): the convergence of the columns of the numerical solution is \emph{orbital-wise}. This means that each column of the numerical solution, representing an individual orbital, converges independently to its corresponding orbital in the ground state.

    For $ U = (u_1, u_2, \cdots,u_N) \in [H_0^1(\Omega)]^N$, we define $\spanned(U)$ as the smallest subspace of $H_0^1(\Omega) $ containing all columns of $U$, given by
    $$ \spanned(U)= \left\{ \sum_{i=1}^{N}\alpha_i u_i\,|\ \alpha_i \in \mathbb{R},\, i =1,2,\cdots,N \right\}.$$
	Let $\mathcal{P}: H_0^1(\Omega) \to \text{span}(U^*)$ be the $L^2$-projection and $\mathcal{P}_a: H_0^1(\Omega) \to \text{span}(U^*)$ be the $H_0^1$-projection, respectively, with
	$$
	\P U = (\P u_1, \cdots, \P u_N) \quad\text{and} \quad
	\P_a U = (\P_a u_1, \cdots, \P_a u_N).
	$$ 
	With these notations, we present the following result, and its proof is provided in Appendix \ref{proof of lem: same projection}.
	\begin{lemma}\label{lem: same projection}
		For any $U \in [H_0^1(\Omega)]^N$, there holds
		\begin{align*}
			\P U = \P_a U.
		\end{align*}
	\end{lemma}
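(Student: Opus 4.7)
The plan is to reduce the claim to the column-wise statement: for any single $u \in H_0^1(\Omega)$, the $L^2$-projection and the $a$-projection onto $\spanned(U^*)$ coincide. Since both $\P$ and $\P_a$ act column-wise on $U = (u_1,\dots,u_N) \in [H_0^1(\Omega)]^N$, the matrix version follows immediately from the scalar version. So I would concentrate entirely on the scalar case.

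The central observation is that $U^* = (u_1^*,\dots,u_N^*)$ is simultaneously orthonormal in $L^2(\Omega)$ and orthogonal in $(\cdot,\cdot)_a$, because of the eigenvalue identity \eqref{equ: model problem 1}. First I would record that for each $i$, the weak form $-\Delta u_i^* + \mathcal{V}u_i^* = \lambda_i u_i^*$ gives $a(u_i^*, v) = \lambda_i (u_i^*, v)$ for all $v \in H_0^1(\Omega)$. Taking $v = u_j^*$ and using $(u_i^*, u_j^*) = \delta_{ij}$ yields $a(u_i^*, u_j^*) = \lambda_i \delta_{ij}$, so $\{u_i^*\}$ is $a$-orthogonal with $\|u_i^*\|_a^2 = \lambda_i$.

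Now expand both projections explicitly in the basis $\{u_i^*\}$: the $L^2$-projection reads $\P u = \sum_{i=1}^N (u, u_i^*)\, u_i^*$, while the $a$-projection reads $\P_a u = \sum_{i=1}^N \lambda_i^{-1}\, a(u, u_i^*)\, u_i^*$. Applying $a(u, u_i^*) = a(u_i^*, u) = \lambda_i (u_i^*, u) = \lambda_i (u, u_i^*)$, the $\lambda_i$ in numerator and denominator cancel and the two expansions coincide. Extending to matrices by applying this identity to each column $u_k$ of $U$ concludes the proof.

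There is no real obstacle here; the only subtlety is noticing that the eigenfunction identity makes the two natural expansion coefficients agree up to the factor $\lambda_i$, which cancels. Since $\lambda_1 > 0$ by the assumption on $a(\cdot,\cdot)$, the division is legitimate and no further care is needed. The argument is essentially three lines once the basis representations are written down.
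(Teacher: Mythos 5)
Your proof is correct and is essentially the paper's argument, just written out component-wise in the eigenbasis $\{u_i^*\}$ rather than in the paper's compact matrix notation (where $\P U = U^*\langle U^*,U\rangle$ and $\P_a U = U^*\langle U^*,U^*\rangle_a^{-1}\langle U^*,U\rangle_a = U^*\Lambda^{-1}\Lambda\langle U^*,U\rangle$). The key observation in both is identical: the eigenvalue relation $a(u_i^*,v)=\lambda_i(u_i^*,v)$ makes the $\lambda_i$ factors cancel between the $a$-Gram matrix and the $a$-coefficients.
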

	
	We denote $\P_{\bot}U = U- \P U$.
	By the conclusion of the above lemma, we have
	\begin{align*}
		\langle \P U, \P_{\bot}U \rangle = \langle \P U, \P_{\bot}U \rangle_a = 0,
	\end{align*}
	which leads to the following lemma, with its proof given in Appendix \ref{proof of lemma: PGU=GPU}.
	\begin{lemma}\label{lemma: PGU=GPU}
		For any $U \in [H_0^1(\Omega)]^N$, there holds
		\begin{align*}
			\P_{\bot} (\G U) = \G (\P_{\bot}U).
		\end{align*}
	\end{lemma}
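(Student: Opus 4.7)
The plan is to exploit the fact that the columns of $U^*$ are eigenfunctions of the operator $-\Delta + \mathcal{V}$, so that $\spanned(U^*)$ is an invariant subspace of $\mathcal{G}=(-\Delta+\mathcal{V})^{-1}$, and then decompose $U$ and apply $\mathcal{G}$ componentwise. Concretely, I would first note that for each $i$, $\mathcal{G}u_i^* = \lambda_i^{-1} u_i^*$, so for any $v \in \spanned(U^*)$ we have $\mathcal{G}v \in \spanned(U^*)$.

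Next, I would split $U = \mathcal{P}U + \mathcal{P}_\bot U$ and apply $\mathcal{G}$ to get $\mathcal{G}U = \mathcal{G}(\mathcal{P}U) + \mathcal{G}(\mathcal{P}_\bot U)$. The first summand lies in $[\spanned(U^*)]^N$ by the invariance observation above, so it is killed by $\mathcal{P}_\bot$. It then suffices to show that $\mathcal{G}(\mathcal{P}_\bot U) \in [\spanned(U^*)^\bot]^N$, where the orthogonal complement can be taken either with respect to $(\cdot,\cdot)$ or $(\cdot,\cdot)_a$ thanks to Lemma \ref{lem: same projection}. For any column $w$ of $\mathcal{P}_\bot U$ and any $i$, the self-adjointness identity $(\mathcal{G}w, u_i^*) = (w, \mathcal{G}u_i^*)$ together with $\mathcal{G}u_i^* = \lambda_i^{-1} u_i^*$ gives
\begin{align*}
(\mathcal{G}w, u_i^*) = \lambda_i^{-1}(w, u_i^*) = 0,
\end{align*}
so $\mathcal{G}w \perp \spanned(U^*)$ in $L^2$, hence also in the $a$-inner product by Lemma \ref{lem: same projection}.

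Combining these two observations, I would conclude $\mathcal{P}(\mathcal{G}U) = \mathcal{G}(\mathcal{P}U)$ and therefore $\mathcal{P}_\bot(\mathcal{G}U) = \mathcal{G}U - \mathcal{G}(\mathcal{P}U) = \mathcal{G}(\mathcal{P}_\bot U)$, which is the claim. I do not anticipate a serious obstacle here: the argument is almost entirely structural once one recognizes that $\spanned(U^*)$ consists of eigenfunctions of $\mathcal{G}$. The only mild subtlety is being careful that the two-sided invariance (both $\mathcal{P}$ and $\mathcal{P}_\bot$ commute with $\mathcal{G}$) requires both the forward invariance and the adjoint invariance, which is why the self-adjointness property $(\mathcal{G}u, v) = (u, \mathcal{G}v)$ of $\mathcal{G}$ on $L^2$ recalled just before Lemma \ref{lemma:B_n exists} is needed.
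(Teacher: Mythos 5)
Your proof is correct and rests on the same two ingredients as the paper's: the eigenfunction identity $\G U^* = U^*\Lambda^{-1}$ and the $L^2$-self-adjointness of $\G$. The paper simply computes both $\P_\bot(\G U) = \G U - U^*\langle U^*, \G U\rangle = \G U - U^*\Lambda^{-1}\langle U^*, U\rangle$ and $\G(\P_\bot U) = \G U - \G U^*\langle U^*, U\rangle = \G U - U^*\Lambda^{-1}\langle U^*, U\rangle$ and observes they coincide; you package the same facts as a two-sided invariance argument ($\G$ maps $\spanned(U^*)$ into itself and, by self-adjointness, maps its orthogonal complement into itself), which is a more structural but mathematically equivalent presentation.
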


	\begin{lemma}\label{lem: exponential convergence of numerical scheme}
		Suppose the initial value $U^0$ satisfies $E_{\text{GS}} \leqslant E(U^0) < E_{\text{ES}}$. If $\tau_n \in [\tau_{\text{min}}, \tau_{\text{max}}]$, with $\tau_{\text{max}}$ sufficiently small, then there exist $\omega \in (0,1)$ and $n_0\in \mathbb{N}_+$ such that 
		\begin{align*}
			\|\P_{\bot}U^{n+1}\|_a \leqslant \omega \|\P_{\bot}U^n\|_a, \qquad \forall n \geqslant n_0.
		\end{align*}
	\end{lemma}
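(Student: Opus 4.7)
The plan is to propagate the projection $\P_\bot$ through the explicit update $U^{n+1} = U^n - \tau_n U^n A^n + \tau_n \G U^n B^n$ and then contract in the $a$-norm. Since post-multiplication by an $N\times N$ matrix commutes with the linear map $\P_\bot$, and $\P_\bot \G U^n = \G(\P_\bot U^n)$ by Lemma~\ref{lemma: PGU=GPU}, the scheme yields the recursion
\begin{align*}
\P_\bot U^{n+1} = (\P_\bot U^n)(I_N - \tau_n A^n) + \tau_n \G(\P_\bot U^n) B^n.
\end{align*}
Using the general bound $\|UM\|_a \leqslant \|M\|_2 \|U\|_a$ valid for $U \in [H_0^1(\Omega)]^N$ and $M \in \R^{N\times N}$ (which follows from $\|UM\|_a^2 = \trace(M^\top \langle U,U\rangle_a M)$), I get
\begin{align*}
\|\P_\bot U^{n+1}\|_a \leqslant \|I_N - \tau_n A^n\|_2 \|\P_\bot U^n\|_a + \tau_n \|B^n\|_2 \|\G(\P_\bot U^n)\|_a.
\end{align*}

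The second ingredient is a spectral bound on $\G$ restricted to $\spanned(U^*)^\bot$. Expanding any $v \in \spanned(U^*)^\bot \cap H_0^1(\Omega)$ as $v = \sum_{j > N} c_j \phi_j$ in the eigenbasis of $-\Delta+\V$, I find $\|\G v\|_a^2 = \sum_{j>N} c_j^2/\lambda_j \leqslant \lambda_{N+1}^{-2}\sum_{j>N} c_j^2 \lambda_j = \lambda_{N+1}^{-2}\|v\|_a^2$, so $\|\G(\P_\bot U^n)\|_a \leqslant \lambda_{N+1}^{-1}\|\P_\bot U^n\|_a$. Substituting back,
\begin{align*}
\|\P_\bot U^{n+1}\|_a \leqslant \Bigl(\|I_N - \tau_n A^n\|_2 + \tau_n \|B^n\|_2/\lambda_{N+1}\Bigr)\|\P_\bot U^n\|_a,
\end{align*}
so the entire task reduces to controlling the matrix norms $\|I_N - \tau_n A^n\|_2$ and $\|B^n\|_2$ for large $n$.

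For this last step I invoke Theorem~\ref{thm: convergence of Un} to get a sequence $Q_n \in \mathcal{O}^N$ with $U^n Q_n \to U^*$ in the $a$-norm (hence $\|\P_\bot U^n\|_a \to 0$), and Theorem~\ref{thm: Energy E(Un) decay} summed over $n$ to deduce $\sum_n \tau_n\|\L_{U^n}U^n\|_a^2 < \infty$ and therefore $\|\L_{U^n}U^n\|_a\to 0$. Since $U^{n+1/2} - U^n = -(\tau_n/2)\L_{U^n} U^{n+1/2}$ with $\|\L_{U^n}U^{n+1/2}\|_a \leqslant C\|\L_{U^n}U^n\|_a$, both $A^n - \langle \G U^n,U^n\rangle$ and $B^n - I_N$ vanish, while $Q_n^\top \langle \G U^n, U^n\rangle Q_n \to (\Lambda^*)^{-1} = \diag(1/\lambda_1,\ldots,1/\lambda_N)$. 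Thus the eigenvalues of the asymptotically symmetric $A^n$ approach $\{1/\lambda_i\}_{i=1}^N$, and if $\tau_{\max} < \lambda_1$, one obtains $\|I_N - \tau_n A^n\|_2 \leqslant 1 - \tau_n/\lambda_N + \varepsilon_n$ and $\|B^n\|_2 \leqslant 1 + \varepsilon_n$ for some $\varepsilon_n\to 0$. Combining,
\begin{align*}
\|\P_\bot U^{n+1}\|_a \leqslant \bigl(1 - \tau_n(\lambda_N^{-1} - \lambda_{N+1}^{-1}) + C\varepsilon_n\bigr)\|\P_\bot U^n\|_a,
\end{align*}
and choosing $n_0$ so that $C\varepsilon_n < \tau_{\min}(\lambda_N^{-1} - \lambda_{N+1}^{-1})/2$ for all $n\geqslant n_0$ produces the contraction factor $\omega = 1 - \tau_{\min}(\lambda_N^{-1} - \lambda_{N+1}^{-1})/2 \in (0,1)$. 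The main obstacle is precisely this last step: because $A^n$ is only asymptotically symmetric (since $U^{n+1/2} \neq U^n$) and the identification with $(\Lambda^*)^{-1}$ holds only up to an unknown orthogonal conjugation $Q_n$, the spectral-norm bound requires matrix perturbation arguments applied to the splitting $A^n = \tfrac12(A^n + (A^n)^\top) + \tfrac12(A^n - (A^n)^\top)$, where the symmetric part's eigenvalues converge to $\{\lambda_i^{-1}\}$ via $Q_n$-similarity and the skew part vanishes in norm.
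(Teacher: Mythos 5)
Your proposal is correct, and it takes a genuinely different route from the paper's. The paper expands $\|\P_{\bot}U^{n+1}\|_a^2$ directly in the $a$-inner product, isolates the linear-in-$\tau_n$ cross term $-2\tau_n(\P_{\bot}U^n,\P_{\bot}\L_{U^n}U^{n})_a$, and rewrites it via the identity $(\P_{\bot}U^n,\P_{\bot}\L_{U^n}U^n)_a = \operatorname{tr}\bigl(\langle\P_{\bot}U^n,\P_{\bot}U^n\rangle_a\langle\G U^n,U^n\rangle\bigr)-\|\P_{\bot}U^n\|^2$; the contraction then drops out immediately once $\lambda_{\min}(\langle\G U^n,U^n\rangle)\geqslant\lambda_N^{-1}-\epsilon$ for large $n$ and $\|\P_{\bot}U^n\|^2\leqslant\lambda_{N+1}^{-1}\|\P_{\bot}U^n\|_a^2$ are invoked, with the remaining terms absorbed into an $O(\tau_n^2)$ error. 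You instead factor the update as the linear recursion $\P_{\bot}U^{n+1}=(\P_{\bot}U^n)(I_N-\tau_n A^n)+\tau_n\G(\P_{\bot}U^n)B^n$ and apply $\|UM\|_a\leqslant\|M\|_2\|U\|_a$ together with the spectral bound $\|\G v\|_a\leqslant\lambda_{N+1}^{-1}\|v\|_a$ on $\spanned(U^*)^\perp$, reducing the problem to asymptotic spectral estimates for the coefficient matrices $A^n$ and $B^n$. Both give the same first-order contraction factor $1-\tau_n(\lambda_N^{-1}-\lambda_{N+1}^{-1})+o(1)$, but the paper's inner-product expansion sidesteps the non-symmetry of $A^n$ entirely (only the symmetric matrix $\langle\G U^n,U^n\rangle$ appears in its key estimate), whereas your route, as you flag yourself, requires a matrix perturbation step — a symmetric/skew splitting of $A^n$, Weyl-type continuity of eigenvalues for the symmetric part, and the observation that the skew part is $O(\|U^{n+1/2}-U^n\|_a)\to 0$ — to justify $\|I_N-\tau_n A^n\|_2\leqslant 1-\tau_n/\lambda_N+\varepsilon_n$. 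That extra bookkeeping is the price for the more transparent ``contractive linear iteration'' picture your argument offers; the paper trades that transparency for a shorter, more algebraic derivation. Filling in the perturbation step as you outlined would make your proof complete.
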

	\begin{proof}
    The proof is provided in Appendix~\ref{proof of lem: exponential convergence of numerical scheme}. \end{proof}

	Now, we introduce the notion of the distance between two spaces \cite{babuska1991eigenvalue,chatelin2011spectral}. For two finite-dimensional subspaces $M$ and $N$ of a Hilbert space $X$ with $\operatorname{dim}M =\operatorname{dim}N$ , we define
	\begin{align*}
		\delta_X(M,N) = \sup_{x \in M, \, \|x\|_X = 1} \text{dist}_X(x,N), \qquad \text{dist}_X(x,N) \stackrel{\Delta}{=} \inf_{y \in N} \|x - y\|_X
	\end{align*}
	as the distance between the spaces $M$ and $N$. Based on this definition, we denote the space distance in $H_0^1(\Omega)$ and $L^2(\Omega)$ as
	\begin{align*}
		\delta_{H_0^1}(M,N) = \sup_{x \in M,\, \|x\|_a = 1} \text{dist}_{H_0^1}(x,N), \quad \delta_{L^2}(M,N) = \sup_{x \in M,\, \|x\| = 1} \text{dist}_{L^2}(x,N) . 
	\end{align*}
	
	\begin{theorem}\label{thm: exponential convergence of space distance}
		Suppose the initial value $U^0$ satisfies $E_{\text{GS}} \leqslant E(U^0) < E_{\text{ES}}$. If $\tau_n \in [\tau_{\text{min}}, \tau_{\text{max}}]$, with $\tau_{\text{max}}$ sufficiently small, then there exist $C>0, c >0$ such that 
		\begin{align}
			{\delta}_{H_0^1}(\spanned(U^n),\spanned(U^*)) \leqslant C e^{-cn}.
		\end{align} 
	\end{theorem}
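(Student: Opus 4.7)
The plan is to reduce the space-distance estimate to the projection-error estimate furnished by Lemma \ref{lem: exponential convergence of numerical scheme}. The key observation is that every element of $\spanned(U^n)$ has the form $U^n\bm{\alpha}$ for some $\bm{\alpha}\in\R^N$, and that by Lemma \ref{lem: same projection}, $\mathcal{P}_{\bot}$ is simultaneously the $L^2$- and $H_0^1$-orthogonal projection onto $\spanned(U^*)^\perp$. In particular, for every $x\in H_0^1(\Omega)$,
\[
\text{dist}_{H_0^1}(x,\spanned(U^*)) = \|\mathcal{P}_{\bot} x\|_a.
\]

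First, I would write a generic element as $x = U^n\bm{\alpha}$ and use linearity of $\mathcal{P}_{\bot}$ together with the inner-matrix identity to get
\[
\|\mathcal{P}_{\bot}(U^n\bm{\alpha})\|_a^2 = \bm{\alpha}^\top \langle \mathcal{P}_{\bot} U^n, \mathcal{P}_{\bot} U^n \rangle_a \bm{\alpha} \leqslant \|\bm{\alpha}\|^2\,\text{tr}\langle \mathcal{P}_{\bot} U^n, \mathcal{P}_{\bot} U^n \rangle_a = \|\bm{\alpha}\|^2\,\|\mathcal{P}_{\bot} U^n\|_a^2.
\]
Next, the normalization $\|x\|_a = 1$ forces
\[
1 = \bm{\alpha}^\top \langle U^n, U^n \rangle_a \bm{\alpha} \geqslant \lambda_{\min}(\langle U^n, U^n \rangle_a)\,\|\bm{\alpha}\|^2,
\]
so that $\|\bm{\alpha}\|$ is controlled by the smallest eigenvalue of $\langle U^n, U^n\rangle_a$.

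The remaining ingredient is a uniform positive lower bound on $\lambda_{\min}(\langle U^n, U^n \rangle_a)$. Here I would use Theorem \ref{thm: convergence of Un}: since $\|[U^n]-[U^*]\|_a \to 0$, there exist $Q_n\in\mathcal{O}^N$ with $U^n Q_n \to U^*$ in $[H_0^1(\Omega)]^N$, and a direct computation gives $\langle U^*, U^* \rangle_a = \Lambda^*$. Thus the eigenvalues of $\langle U^n, U^n \rangle_a$ (which are invariant under the orthogonal transformation $Q_n$) converge to $\lambda_1,\ldots,\lambda_N > 0$, so there exist $c_0 > 0$ and $n_1 \in \mathbb{N}_+$ with $\lambda_{\min}(\langle U^n, U^n\rangle_a) \geqslant c_0$ for all $n\geqslant n_1$. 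Combining the three bounds yields
\[
\delta_{H_0^1}(\spanned(U^n),\spanned(U^*)) \leqslant \tfrac{1}{\sqrt{c_0}}\,\|\mathcal{P}_{\bot} U^n\|_a, \qquad n\geqslant n_1,
\]
and applying Lemma \ref{lem: exponential convergence of numerical scheme} iteratively gives $\|\mathcal{P}_{\bot} U^n\|_a \leqslant \omega^{n-n_0}\|\mathcal{P}_{\bot} U^{n_0}\|_a$, hence the claimed estimate with $c = -\ln\omega > 0$ after absorbing the finitely many initial terms into the constant $C$.

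The main technical point to be careful about is the lower bound on $\lambda_{\min}(\langle U^n, U^n\rangle_a)$, which is not recorded in an earlier lemma but is an immediate consequence of the already-established convergence of the equivalence classes together with the positive definiteness of $\Lambda^*$. I should also briefly remark that $\spanned(U^n)$ has dimension $N$ (so that $\delta_{H_0^1}$ applies to subspaces of matching dimension), which follows directly from $\langle U^n, U^n\rangle = I_N$ established in Theorem \ref{thm: orthogonality of Un}.
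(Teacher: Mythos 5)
Your proof is correct and follows the same basic reduction as the paper: both use $\text{dist}_{H_0^1}(u,\spanned(U^*))=\|\P_\bot u\|_a$, express $u=U^n\bm\alpha$, control $\|\bm\alpha\|$, and then invoke Lemma~\ref{lem: exponential convergence of numerical scheme} to get the exponential decay. The genuine difference is in how you control $\|\bm\alpha\|$: you work with the $a$-inner matrix $\langle U^n,U^n\rangle_a$ and establish a uniform lower bound on $\lambda_{\min}(\langle U^n,U^n\rangle_a)$ by appealing to Theorem~\ref{thm: convergence of Un} and the limit $\langle U^nQ_n,U^nQ_n\rangle_a\to\Lambda^*$. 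That argument is correct, but it invokes a heavier result than needed and yields a bound only for $n\geqslant n_1$. The paper instead uses the elementary observation that $\langle U^n,U^n\rangle=I_N$ (already guaranteed by Theorem~\ref{thm: orthogonality of Un}) gives $\|\bm\alpha\|^2=\|u\|^2\leqslant C\|u\|_a^2=C$ via the Poincar\'e-type embedding, which is uniform in $n$ and requires no limiting argument at all. Put differently, your step $1\geqslant\lambda_{\min}(\langle U^n,U^n\rangle_a)\|\bm\alpha\|^2$ can be sharpened to a free uniform bound by noting $\bm\alpha^\top\langle U^n,U^n\rangle_a\bm\alpha=\|U^n\bm\alpha\|_a^2\geqslant\lambda_1\|U^n\bm\alpha\|^2=\lambda_1\|\bm\alpha\|^2$, so $\lambda_{\min}(\langle U^n,U^n\rangle_a)\geqslant\lambda_1>0$ for every $n$ without any convergence input. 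Both routes close the proof, but the paper's is the more economical one; your version would be preferable only in settings where $L^2$-orthonormality is not exactly preserved.
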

	\begin{proof}
		For any $u \in \text{span}(U^n)$ with $\|u\|_a = 1$, we have
		$$
		\text{dist}_{H_0^1}(u,\text{span}(U^*) ) = \|\mathcal{P}_{\bot}u\|_a.
		$$
		Suppose $u = \sum_{i = 1}^N \alpha_i u_i^n$, then 
		\begin{align*}
			\sum_{i = 1}^N \alpha_i^2 = \|u\|^2 \leqslant C \|u\|_a^2 = C.
		\end{align*}
		From Lemma \ref{lem: exponential convergence of numerical scheme}, we know that there exist constants $C$ and $c$ such that 
		$$
		\|\P_{\bot}U^n\|_a \leqslant C e^{-cn}.
		$$
		Hence,
		\begin{align*}
			\|\P_{\bot} u\|_a  \leqslant  \sum_{i = 1}^N |\alpha_i|\|\P_{\bot}u_i^n\|_a  \leqslant  C \|\P_{\bot} U^n\|_a  \leqslant C e^{-cn}, 
		\end{align*}
		which implies 
		\begin{equation*}
			\begin{aligned}
				{\delta}_{H_0^1}(\text{span}(U^n),\text{span}(U^*)) &= \sup_{u \in \text{span}(U^n), \,\|u\|_a = 1}  \|\mathcal{P}_{\bot}u\|_a \leqslant C e^{-cn}.
			\end{aligned}
		\end{equation*}
		This completes the proof.
	\end{proof}
	
	Recall the distance between $[U]$ and $[V]$ in $L^2$ sense
	\begin{align}
		\| [U] - [V]\| = \min_{Q \in \mathcal{O}^{N\times N}}\|U - VQ\|.
	\end{align}
	With the notion of \emph{principal angle} \cite{edelman1998geometry}, we have 
	\begin{align}
		\| [U] - [V]\|^2 = \sum_{j=1}^N 4\sin^2\frac{\theta_j}{2},
	\end{align}
	where $0 \leqslant \theta_1\leqslant  \cdots \leqslant  \theta_N \leqslant  \frac{\pi}{2} $ are the principal angles. Using these angles, the corresponding \emph{space distance} can be formulated as 
	\begin{align}
		{\delta}_{L^2}(\text{span}(U),\text{span}(V)) = \max_{k} \sin\theta_k = \sin\theta_N.
	\end{align}
	Following the proof of Theorem \ref{thm: exponential convergence of space distance}, we obtain 
	\begin{align*}
		{\delta}_{L^2}(\text{span}(U^n),\text{span}(U^*)) \leqslant C \|\P_{\bot}U^n\| \leqslant C \|\P_{\bot}U^n\|_a \leqslant C e^{-cn}.
	\end{align*}
	We immediately arrive at the following result, with the proof provided in Appendix \ref{proof of lemma: [Un] tends to [U_real]}.
	\begin{lemma}\label{lemma: [Un] tends to [U_real]}
		Under the assumption of Theorem \ref{thm: exponential convergence of space distance}, there holds 
		\begin{align}
			\| [U^n] - [U^*]\|_a \leqslant C e^{-cn}.
		\end{align}
	\end{lemma}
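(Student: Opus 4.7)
The plan is to exhibit a single orthogonal matrix $Q^* \in \mathcal{O}^N$, constructed from the polar decomposition of $B \stackrel{\Delta}{=} \langle U^*, U^n \rangle$, which realizes the infimum defining $\|[U^n] - [U^*]\|_a$ up to a constant. The three ingredients I will rely on are the $a$-orthogonal splitting $U^n = \mathcal{P}U^n + \mathcal{P}_\bot U^n$ given by Lemma \ref{lem: same projection}, the exponential decay $\|\mathcal{P}_\bot U^n\|_a \leqslant C e^{-cn}$ from Lemma \ref{lem: exponential convergence of numerical scheme}, and the identity $\langle U^*, U^* \rangle_a = \Lambda^*$, which follows because the columns of $U^*$ are the first $N$ eigenfunctions and in particular yields $\langle U^*, U^* \rangle_a \leqslant \lambda_N I_N$ in the semidefinite order.

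First I would write $\mathcal{P}U^n = U^* B$, so that $B = \langle U^*, U^n \rangle$. The constraint $\langle U^n, U^n \rangle = I_N$, together with the $L^2$-orthogonal splitting, yields $B^{\top} B = I_N - \langle \mathcal{P}_\bot U^n, \mathcal{P}_\bot U^n \rangle$, and hence
\[
\|B^{\top} B - I_N\|_F \leqslant C \|\mathcal{P}_\bot U^n\|^2 \leqslant C e^{-2cn}.
\]
In particular, $B$ is invertible for $n$ sufficiently large, so it admits a polar decomposition $B = Q^* S$ with $Q^* \in \mathcal{O}^N$ and $S = (B^{\top}B)^{1/2}$ symmetric positive definite. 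Writing $U^n - U^* Q^* = U^* Q^* (S - I_N) + \mathcal{P}_\bot U^n$ and invoking the $a$-orthogonality from Lemma \ref{lem: same projection}, I obtain
\[
\|U^n - U^* Q^*\|_a^2 = \|U^* Q^* (S - I_N)\|_a^2 + \|\mathcal{P}_\bot U^n\|_a^2.
\]

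The second term is already of size $O(e^{-2cn})$. For the first term, using $\langle U^*, U^* \rangle_a = \Lambda^*$ together with the orthogonality of $Q^*$ gives $\|U^* Q^* (S - I_N)\|_a^2 \leqslant \lambda_N \|S - I_N\|_F^2$. The only genuinely technical step is an operator-square-root perturbation estimate: the commutative factorization $(S - I_N)(S + I_N) = B^{\top} B - I_N$, combined with $S \to I_N$ (so that $(S + I_N)^{-1}$ is uniformly bounded for $n$ large), transfers the quadratic bound to $\|S - I_N\|_F \leqslant C e^{-2cn}$. Assembling both contributions yields $\|U^n - U^* Q^*\|_a^2 \leqslant C e^{-2cn}$, and the desired inequality follows by taking square roots; the finitely many early indices where $B$ may fail to be invertible can be absorbed into the constant $C$.
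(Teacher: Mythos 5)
Your proof is correct, and it takes a genuinely different route from the paper's. The paper works first in $L^2$: it bounds the equivalence-class distance $\|[U^n]-[U^*]\|$ by the space distance $\delta_{L^2}(\spanned(U^n),\spanned(U^*))$ via the principal-angle inequality $\sin(\theta/2)\leqslant C\sin\theta$ on $[0,\pi/2]$, then transfers this bound to the $a$-norm by splitting $U^n - U^*Q$ into its components in and $a$-orthogonal to $\spanned(U^*)$, invoking the (non-constructive) minimizing matrices $Q_1^n$ (for $\|\cdot\|$) and $Q_2^n$ (for $\|\cdot\|_a$) and the equivalence of norms on the finite-dimensional space $\spanned(U^*)$. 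You instead exhibit the rotation explicitly as the orthogonal factor $Q^*$ of the polar decomposition of $B=\langle U^*,U^n\rangle$, decompose $U^n-U^*Q^* = U^*Q^*(S-I_N)+\P_\bot U^n$ $a$-orthogonally, and control the tangential piece through the commuting factorization $S-I_N=(B^\top B-I_N)(S+I_N)^{-1}$. (One can check that your $Q^*$ is in fact the paper's $Q_1^n$: it maximizes $\mathrm{tr}\!\left(\langle U^n,U^*\rangle Q\right)=\mathrm{tr}(SQ^{*\top}Q)$.) Your version is more self-contained — it sidesteps the principal-angle machinery entirely — and it additionally reveals that the in-subspace error $\|U^*Q^*(S-I_N)\|_a$ is $O(e^{-2cn})$, i.e.\ second-order small compared with $\|\P_\bot U^n\|_a$, which the paper's argument does not make visible. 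The price is the extra operator-square-root perturbation step, which the paper avoids by citing principal-angle identities instead.
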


	Finally, we obtain the exponential energy convergence and \emph{orbital-to-orbital} convergence, consistent with the theoretical results of model \eqref{equ: evolution problem}. The \emph{orbital-wise} convergence is a further reinforcement of the exponential convergence of the subspaces presented in Theorem \ref{thm: exponential convergence of space distance}. This property ensures that the individual characteristics and properties of each orbital are preserved throughout the convergence process, rather than being lost in a collective subspace convergence.
	
	\begin{theorem}\label{thm: exponential convergence of Un by orbtial}
		Under the assumption of Theorem \ref{thm: exponential convergence of space distance}, there exist $C >0$, $c >0$ and $Q^*\in \mathcal{O}^N$ such that 
		\begin{equation*}
			\begin{aligned}
				E(U^n) - E(U^*) \leqslant C e^{-2cn} \quad \text{and} \quad	\|U^n - U^* Q^*\|_a \leqslant Ce^{-cn}.
			\end{aligned}
		\end{equation*}
	\end{theorem}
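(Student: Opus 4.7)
The plan is to first establish the exponential energy decay, then leverage it to show that the per-step orthogonal matrices $Q_n$ obtained from Lemma~\ref{lemma: [Un] tends to [U_real]} form a Cauchy sequence, whose limit provides the single fixed rotation $Q^*$.

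\textbf{Energy bound.} Using Lemma~\ref{lem: same projection}, I decompose $U^n = \P U^n + \P_\bot U^n$ and write $\P U^n = U^* S_n$ for some $S_n \in \R^{N\times N}$. Since $U^*$ solves \eqref{equ: model problem 1}, one has $\langle U^*, V\rangle_a = \Lambda^*\langle U^*, V\rangle$, so that $\|U^n\|_a^2 = \text{tr}(S_n^\top \Lambda^* S_n) + \|\P_\bot U^n\|_a^2$. The constraint $\langle U^n, U^n\rangle = I_N$ forces $S_n^\top S_n = I_N - \langle\P_\bot U^n, \P_\bot U^n\rangle$, hence $|\text{tr}(\Lambda^*(S_n S_n^\top - I_N))|\leqslant C\|\P_\bot U^n\|^2$. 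Combining this with $2E(U^*) = \text{tr}(\Lambda^*)$ and Lemma~\ref{lem: exponential convergence of numerical scheme} yields $E(U^n) - E(U^*) \leqslant C\|\P_\bot U^n\|_a^2 \leqslant Ce^{-2cn}$, which is the first half of the theorem.

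\textbf{Uniform rotation.} From the scheme, $U^{n+1} - U^n = -\tau_n \L_{U^n}U^{n+\frac12}$, and the proof of Theorem~\ref{thm: Energy E(Un) decay} provides $\|\L_{U^n}U^{n+\frac12}\|_a \leqslant C\|\L_{U^n}U^n\|_a$. Combined with the descent inequality $E(U^n) - E(U^{n+1}) \geqslant C\tau_n\|\L_{U^n}U^n\|_a^2$ of the same theorem, this gives
\[\|U^{n+1}-U^n\|_a^2 \leqslant C\tau_n^2\|\L_{U^n}U^n\|_a^2 \leqslant C\tau_n(E(U^n)-E(U^{n+1})) \leqslant Ce^{-2cn}.\]
By Lemma~\ref{lemma: [Un] tends to [U_real]} I pick $Q_n \in \mathcal{O}^N$ with $\|U^n - U^* Q_n\|_a \leqslant Ce^{-cn}$; the triangle inequality then gives
\[\|U^*(Q_{n+1}-Q_n)\|_a \leqslant \|U^n - U^* Q_n\|_a + \|U^{n+1}-U^n\|_a + \|U^{n+1} - U^* Q_{n+1}\|_a \leqslant Ce^{-cn}.\]
Since $\|U^* R\|_a^2 = \text{tr}(R^\top \Lambda^* R)\geqslant \lambda_1 \|R\|_F^2$ for any $R \in \R^{N\times N}$, one obtains $\|Q_{n+1}-Q_n\|_F \leqslant Ce^{-cn}$, which is summable. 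Hence $\{Q_n\}$ is Cauchy in $\R^{N\times N}$; by closedness of $\mathcal{O}^N$ its limit $Q^*$ lies in $\mathcal{O}^N$ and $\|Q_n - Q^*\|_F \leqslant Ce^{-cn}$. Finally, $\|U^n - U^* Q^*\|_a \leqslant \|U^n - U^* Q_n\|_a + C\|Q_n - Q^*\|_F \leqslant Ce^{-cn}$.

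The main obstacle is the step-to-step bound $\|U^{n+1}-U^n\|_a \leqslant Ce^{-cn}$: a naive $O(\tau_n)$ estimate on $\|\L_{U^n}U^{n+\frac12}\|_a$ is not summable in $n$ and would only yield a rotation $Q_n$ depending on $n$, as in Lemma~\ref{lemma: [Un] tends to [U_real]}. It is precisely the coupling of the descent inequality from Theorem~\ref{thm: Energy E(Un) decay} with the exponential energy decay established in the first step that renders $\sum_n\|U^{n+1}-U^n\|_a$ finite and thereby promotes the sequence $\{Q_n\}$ of per-step rotations to a single fixed limit $Q^*$.
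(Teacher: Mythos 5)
Your proposal is correct, but it reaches both conclusions by a genuinely different route than the paper. The paper's engine is the single quantity $\|\L_{U^n}U^n\|_a$: using Lemma~\ref{lemma: [Un] tends to [U_real]} together with the local Lipschitz continuity of $V \mapsto \L_V V$ (and $\L_{\bar U Q^n}\bar U Q^n = 0$) it gets $\|\L_{U^n}U^n\|_a \leqslant Ce^{-cn}$, then bounds each energy decrement from above by $C(\tau_n+\tau_n^2)\|\L_{U^n}U^n\|_a^2$ and telescopes to obtain the energy rate, and simultaneously bounds $\|U^{n+1}-U^n\|_a \leqslant C\tau_n\|\L_{U^n}U^n\|_a$ so that the telescoping series converges strongly; $Q^*$ is then obtained by identifying the strong limit $\bar U$ as an element of $[U^*]$. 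You instead derive the energy rate directly from the orbital-complement decay of Lemma~\ref{lem: exponential convergence of numerical scheme} via the decomposition $U^n = U^*S_n + \P_{\bot}U^n$, which in fact gives the clean intermediate bound $E(U^n)-E(U^*) \leqslant C\|\P_{\bot}U^n\|_a^2$ (note only that your constraint identity controls $S_n^\top S_n$ while the trace involves $S_nS_n^\top$; this is harmless since the two matrices share the same spectrum, so $0 \leqslant \operatorname{tr}(\Lambda^*(I_N - S_nS_n^\top)) \leqslant \lambda_N\operatorname{tr}(I_N - S_n^\top S_n) = \lambda_N\|\P_{\bot}U^n\|^2$, but it deserves a sentence). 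For the second claim you obtain the step bound $\|U^{n+1}-U^n\|_a \leqslant Ce^{-cn}$ by coupling the descent (lower) inequality of Theorem~\ref{thm: Energy E(Un) decay} with the already-established exponential energy decay, rather than the paper's Lipschitz estimate, and you then promote the per-step rotations $Q_n$ of Lemma~\ref{lemma: [Un] tends to [U_real]} to a fixed $Q^*$ via a Cauchy argument using $\|U^*R\|_a^2 \geqslant \lambda_1\|R\|_F^2$. What your route buys is that it avoids both the Lipschitz step and the per-step upper bound on the energy decrement (which requires reopening the proof of Theorem~\ref{thm: Energy E(Un) decay}); what the paper's route buys is a more compact argument in which one estimate drives everything and the limit rotation comes for free from strong convergence of $U^n$, making your Cauchy-in-$Q_n$ step unnecessary. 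Both arguments yield the same rates under the same hypotheses.
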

	\begin{proof}
		We see from Lemma \ref{lemma: [Un] tends to [U_real]} that, for any iteration step $n$, there exists a $Q^n\in \mathcal{O}^N$ such that $\|U^n - \bar{U}Q^n\|_a \leqslant Ce^{-cn}$ and 
		\begin{align}
			\L_{\bar{U}Q^n}\bar{U}Q^n = 0.
		\end{align}
		Then by the local Lipschitz continuity of $\L$, we conclude that
		\begin{align*}
			\|\L_{U^n}U^n\|_a  = \|\L_{U^n}U^n -	\L_{\bar{U}Q^n}\bar{U}Q^n \|_a 
			\leqslant C \|U^n - \bar{U}Q^n\|_a \leqslant Ce^{-cn}.
		\end{align*}
		Hence,
		\begin{align*}
			E(U^n) - E(U^{n+1}) \leqslant C(\tau_n + \tau_n^2) \|\L_{U^n}U^n\|_a^2 \leqslant C \tau_n e^{-2cn}.
		\end{align*}
		Since $E(U^n) \to E(U^*)$, we obtain
		\begin{align*}
			E(U^n) - E(U^*) & = \sum_{k=n}^{\infty} \left( E(U^k) - E(U^{k+1}) \right) \\
			& \leqslant C \sum_{k=n}^{\infty} e^{-2ck} = C \frac{\tau_{\text{max}} e^{-2cn}}{1 - e^{-2c}} = C e^{-2cn}.
		\end{align*}
		
		The numerical scheme implies that 
		\begin{align}
			\|U^{n+1} - U^n\|_a \leqslant \tau_n\|\L_{U^n}U^{n+\frac12}\|_a \leqslant C \tau_n\|\L_{U^n}U^{n}\|_a \leqslant \tau_n Ce^{-cn}.
		\end{align}
		This implies the following series 
		\begin{align}
			U^0 + \sum_{n = 0}^{\infty} (U^{n+1} - U^n)
		\end{align}
		strongly converges in $[H_0^1(\Omega)]^N$, which is $U^n \to \bar{U}$ strongly in $[H_0^1(\Omega)]^N$ for some $\bar{U} \in [H_0^1(\Omega)]^N$. Furthermore, $\langle \bar{U}, \bar{U} \rangle = I_N$ and $E(\bar{U}) = E_{\text{GS}}$. We can conclude that there exists a $Q^*\in \mathcal{O}^N$ such that $\bar{U} = U^*Q^*$. Therefore,
		\begin{equation*}
			\begin{aligned}
				\| U^n - U^*Q^*\|_a &=	\|U^n - \bar{U}\|_a =\left\| \sum_{k = n}^{\infty} (U^k - U^{k+1})\right\|_a	\leqslant \sum_{k=n}^{\infty}\|U^k - U^{k+1}\|_a \\
				&\leqslant C \tau_n \sum_{k=n}^{\infty} e^{-ck} \leqslant C \frac{ \tau_{\text{max}} e^{-cn}}{1 - e^{-c}} = C e^{-cn}.
			\end{aligned}
		\end{equation*}
		The proof is now complete.
	\end{proof}
	
	\section{Numerical experiments}\label{section:Numerical experiments}
	
	In this section, we evaluate the performance and effectiveness of the proposed algorithm using two typical eigenvalue problems: the harmonic oscillator and the three-dimensional Schr\"odinger equation for the hydrogen atom. All numerical experiments are carried out on the LSSC-IV platform at the Academy of Mathematics and Systems Science, Chinese Academy of Sciences.
	
	In practice, the model (\ref{equ: evolution problem}) can be discretized using various methods, such as the plane wave method, the finite difference method, or the finite element method. In this paper, we employ the finite element method for the spatial discretization, and all the results presented below are obtained using quadratic finite elements.
	
	Consider an $N_g$-dimensional space $V_{N_g} \subset H_0^1\left(\Omega\right)$ spanned by $\phi_1, \phi_2, \ldots, \phi_{N_g}$, and let $\Phi=\left(\phi_1, \phi_2, \ldots, \phi_{N_g}\right)$. For any $U \in\left(V_{N_g}\right)^N$, there exists $C \in \mathbb{R}^{N_g \times N}$ such that
	$$
	U=\Phi C=\left(\sum_{j=1}^{N_g} c_{j 1} \phi_j, \sum_{j=1}^{N_g} c_{j 2} \phi_j, \ldots, \sum_{j=1}^{N_g} c_{j N} \phi_j\right).
	$$
	In all the numerical experiments, the reference solution $(U^*,\Lambda^*)$ is obtained by using the existing eigenvalue solver to solve the eigenvalue problem (\ref{equ: model problem 1}) in the $N_g$-dimensional finite element space $V_{N_g}$. Unless stated otherwise, the iterations are terminated when the relative energy error 
    $$
	\text{err}_E^n=\frac{E(U^n)-E(U^*)}{E(U^*)}
	$$ 
    drops below $10^{-10}$. Time discretization adopts a uniform step size $\tau$, and the initial value $U^0$ is selected as random data with mutually orthogonal columns.
	
	For the clarity of presenting the subsequent numerical results, we denote the relative error of the approximate solutions as
	$$
	\text{err}_U^n=\frac{\|U^{n}-U_{\text{end}}\|}{\|U_{\text{end}}\|},
	$$
	with $U_{\text{end}}$ being the solution obtained from the final iteration (i.e., when the stopping criterion is met). We compute $\text{err}_U^n$ to demonstrate whether the convergence of eigenvectors is \emph{orbital-wise}. 
     Additionally, we denote the relative error of the approximate eigenvalues as
	$$
	\text{err}_i =\frac{\left|\lambda_i-\lambda_i^*\right|}{\left|\lambda_i^*\right|}, \quad i = 1,2, \cdots, N,
	$$
	where $\lambda_i$ are the eigenvalues of the matrix $\langle \mathcal{G}U_{\text{end}}, U_{\text{end}} \rangle^{-1}$, and $\lambda_i^*$ are the diagonal elements of $\Lambda^*$.

	\subsection{Test problem~I: Two–dimensional harmonic oscillator equation}
	
	We consider the following 2D harmonic oscillator equation \cite{ReedSimonIV}: Find $(u,\lambda) \in  H^1(\R^2)\times \mathbb{R}$ such that 
	\begin{equation}\label{eq:2D harmonic oscillator equation}
		-\frac{1}{2} \Delta u+\frac{1}{2}|x|^2 u = \lambda u, \qquad \int_{\R^2} u^2 = 1.
	\end{equation}
	where $|x| = \sqrt{x_1^2+x_2^2}$. The eigenvalues of (\ref{eq:2D harmonic oscillator equation}) are $\lambda_{n_1, n_2}=\left(n_1+\frac{1}{2}\right)+\left(n_2+\frac{1}{2}\right),\quad n_1, n_2=0,1, \cdots,$ and the corresponding eigenfunctions are
	$$
	u_{n_1, n_2}\left(x\right)=\mathcal{H}_{n_1}\left(x_1\right) e^{-x_1^2 / 2} \mathcal{H}_{n_2}\left(x_2\right) e^{-x_2^2 / 2}, \quad n_1, n_2=0,1, \cdots,
	$$
	where $\mathcal{H}_n$ denotes the $n$-th Hermite polynomial.
	
	Since the solution of (\ref{eq:2D harmonic oscillator equation}) decays exponentially, we may solve it over some bounded domain $\Omega$. In the computation, we solve the following eigenvalue problem: find $(u,\lambda) \in  H_0^1(\Omega) \times\mathbb{R} $ such that 
	\begin{equation}
		-\frac{1}{2} \Delta u+\frac{1}{2}|x|^2 u =\lambda u, \qquad \int_{\Omega} u^2  = 1.
	\end{equation}
	where $\Omega=(-5.5,5.5)^2$. We calculate the approximation of the first $N$ smallest eigenvalues with $N=15$ and their corresponding eigenfunctions. That is, we implement the model on a fixed uniform finite element mesh with degrees of freedom $N_g = 39601$, and adopt a fixed time step $\tau = 0.05$ for temporal discretization. Reference solutions are computed using the \emph{eigs} solver from \emph{Arpack.jl}.

	\begin{figure}[htbp]
		\centering
		\subfloat[Convergence curves of the energy]{
			\label{fig:Energy decay}%
			\includegraphics[width=0.49\linewidth]{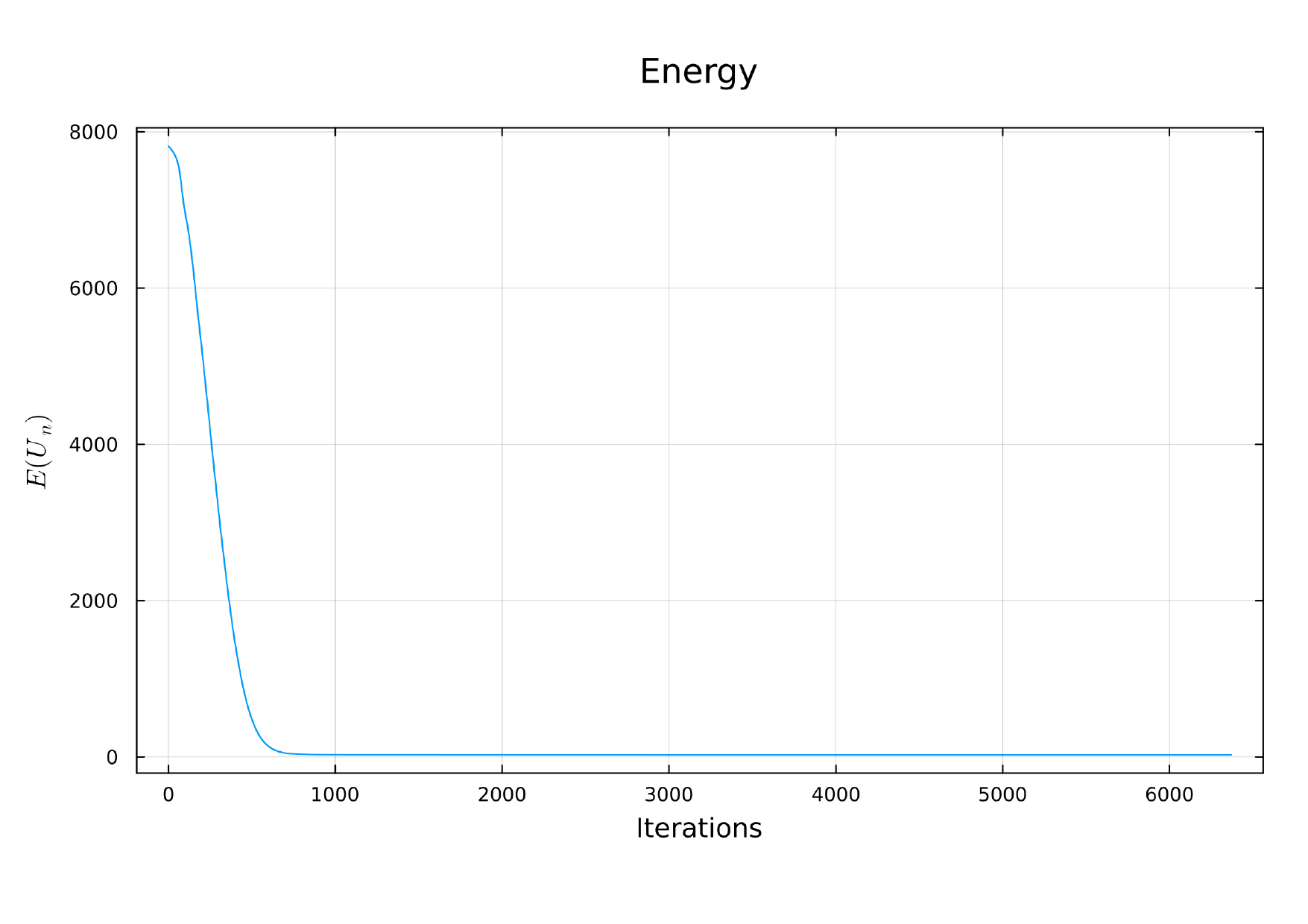}}
		\subfloat[Curves of orthogonality error]{%
			\label{fig:ortho err}%
			\includegraphics[width=0.49\linewidth]{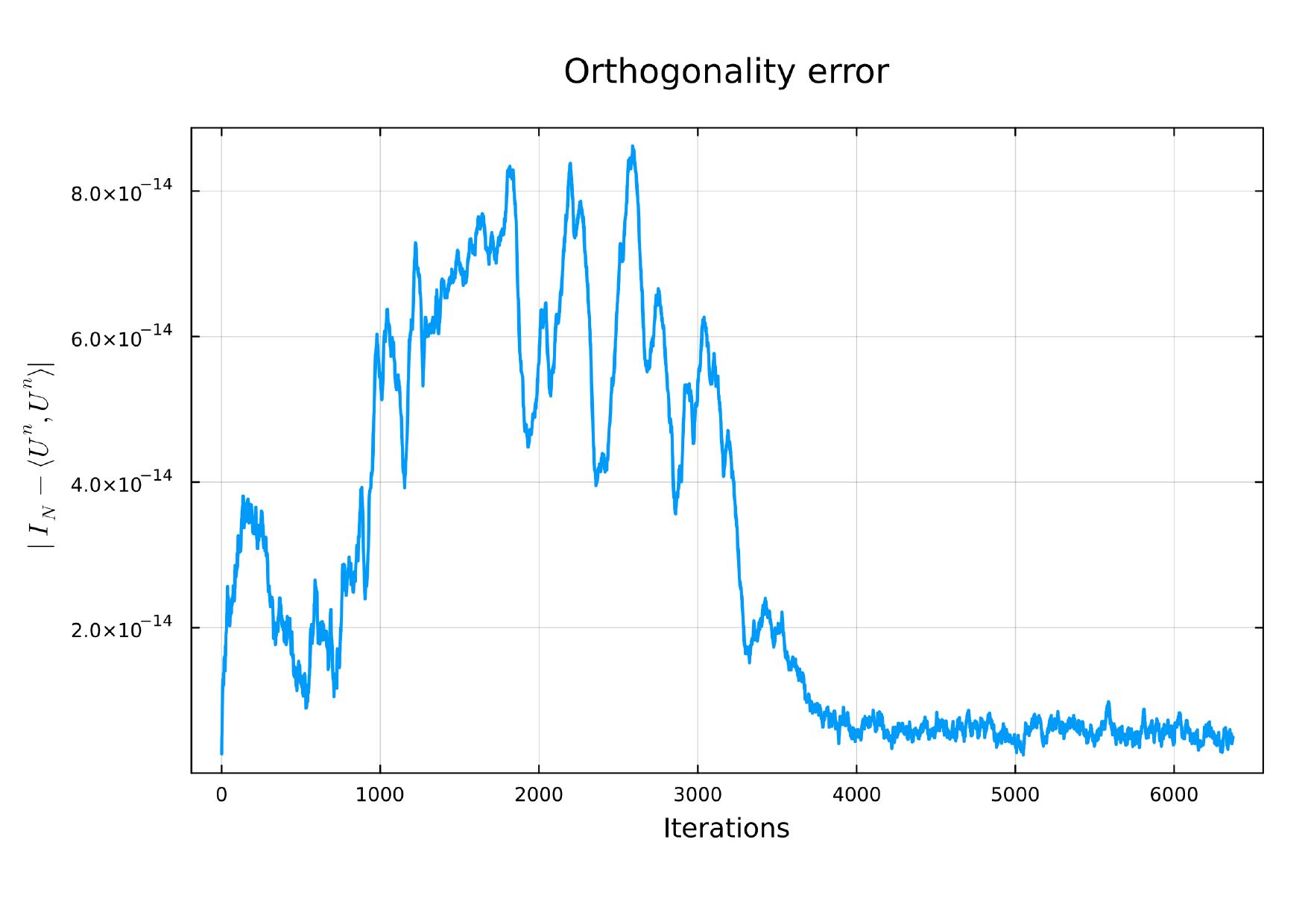}}
		
		\subfloat[Relative error curves of the energy]{%
			\label{fig:Energy exponential decay}%
			\includegraphics[width=0.49\linewidth]{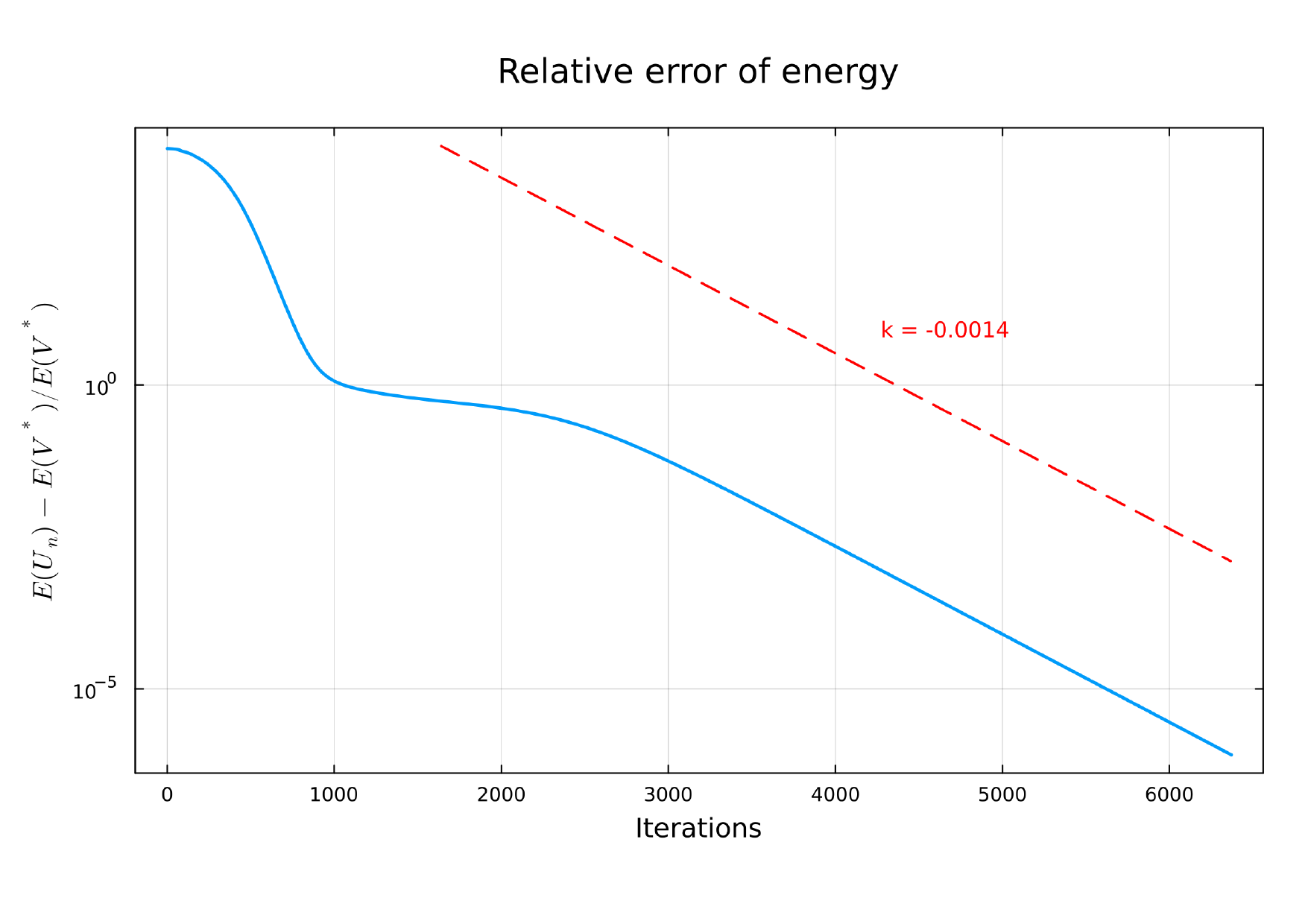}}
		\subfloat[Relative error curves of the eigenvectors]{%
			\label{fig:Eigenfunctions exponential convergence}%
			\includegraphics[width=0.49\linewidth]{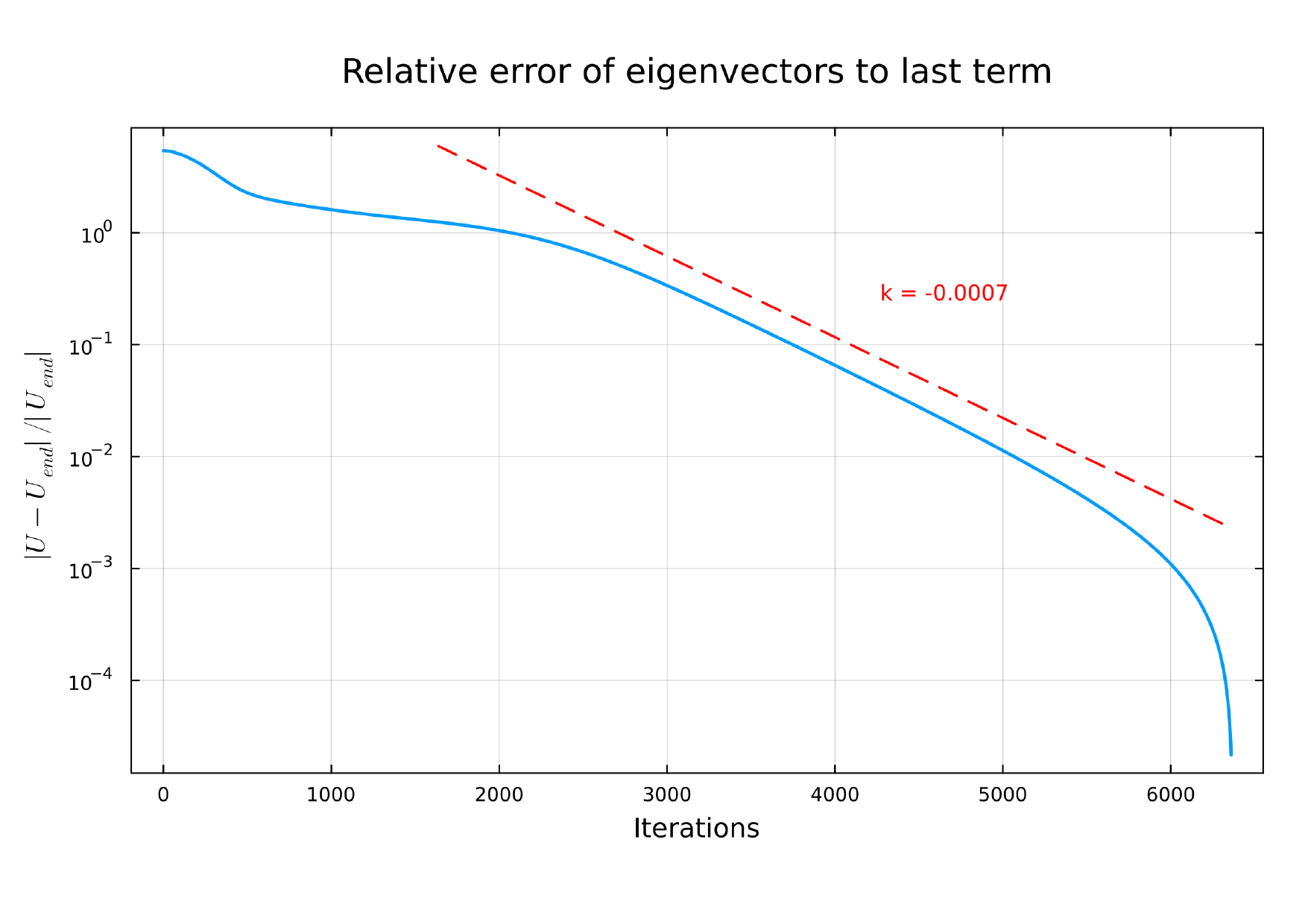}}
		\caption{Numerical results for test problem I}
	\end{figure}
	Figure~\ref{fig:Energy decay} displays the discrete energy $E(U^{n})$, which decreases monotonically with the iteration index $n$. The companion plot, Figure~\ref{fig:ortho err}, shows the orthogonality error $\bigl\|I_{N}-\langle U^{n},U^{n}\rangle\bigr\|$, demonstrating that the scheme preserves orthogonality.  Together, these observations corroborate Theorems~\ref{thm: Energy E(Un) decay} and~\ref{thm: orthogonality of Un}.
	
	The semi-log plots of the relative error of the energy $\text{err}_E^n$ (Figure~\ref{fig:Energy exponential decay}) and of the relative error of the solution $\text{err}_U^n$ (Figure~\ref{fig:Eigenfunctions exponential convergence}) exhibit a linear trend after a short transient period. The slope~$k$, extracted from the last $50\%$ of the samples, matches the asymptotic convergence rate predicted by Theorems~\ref{thm: exponential convergence of Un by orbtial}, thereby confirming the exponential convergence of both the discrete energy and the eigenvectors.
	At the same time, we can also find that the convergence of the eigenvectors is \emph{orbital-to-orbital}, and the exponential convergence rate of the energy is twice that of the eigenvectors.
	
	\begin{table}[htbp]
      \caption{Relative eigenvalue errors from different time step}
		\label{table:Mesh-independent time step}
		\tiny
		\centering
		\renewcommand{\arraystretch}{1.6}
		\begin{tabular}{|>{\centering\arraybackslash}m{0.5cm}|>{\centering\arraybackslash}m{1.77cm}|>{\centering\arraybackslash}m{1.65cm}|>{\centering\arraybackslash}m{1.65cm}|>{\centering\arraybackslash}m{1.53cm}|>{\centering\arraybackslash}m{1.53cm}|>{\centering\arraybackslash}m{1.53cm}|}
			\hline
			\multirow{2}{*}{\footnotesize $\text{err}_{i}$}&  $\bm{\tau = 0.01}$ & $\bm{\tau = 0.05}$ & $\bm{\tau = 0.1}$ & $\bm{\tau = 0.5}$ & $\bm{\tau = 1.0}$ & $\bm{\tau = 1.5}$ \\
			&  after 32425 steps &  after 6370 steps &  after 2866 steps &  after 681 steps &  after 301 steps &  after 209 steps \\
			\hline
			1  & 4.816e-13 & 1.776e-14 & 1.412e-13 & 6.051e-13 & 4.710e-13 & 7.194e-14 \\
			\hline
			2  & 4.128e-13 & 1.676e-13 & 1.750e-13 & 4.823e-13 & 2.343e-13 & 6.306e-14 \\
			\hline
			3  & 2.274e-13 & 2.764e-13 & 1.315e-13 & 4.106e-13 & 9.459e-14 & 1.517e-13 \\
			\hline
			4  & 1.048e-13 & 4.986e-13 & 2.747e-13 & 1.369e-13 & 1.085e-13 & 1.073e-13 \\
			\hline
			5  & 3.271e-14 & 1.172e-13 & 1.880e-14 & 5.921e-14 & 2.041e-13 & 4.367e-14 \\
			\hline
			6  & 8.275e-14 & 1.910e-14 & 1.782e-13 & 1.070e-13 & 1.211e-13 & 5.921e-16 \\
			\hline
			7  & 3.344e-13 & 1.878e-13 & 1.854e-13 & 2.838e-13 & 5.655e-13 & 3.142e-13 \\
			\hline
			8  & 1.004e-13 & 4.463e-14 & 1.432e-13 & 2.640e-13 & 2.098e-13 & 1.725e-13 \\
			\hline
			9  & 1.794e-13 & 2.720e-13 & 1.941e-13 & 5.185e-13 & 2.576e-13 & 3.078e-13 \\
			\hline
			10 & 1.401e-13 & 2.349e-13 & 1.554e-13 & 4.754e-13 & 6.928e-14 & 1.299e-13 \\
			\hline
			11 & 1.427e-08 & 1.454e-08 & 7.510e-08 & 1.280e-08 & 4.472e-09 & 9.254e-11 \\
			\hline
			12 & 3.331e-11 & 4.692e-09 & 1.676e-09 & 1.479e-10 & 4.612e-10 & 1.990e-11 \\
			\hline
			13 & 9.477e-07 & 2.238e-08 & 3.030e-08 & 6.185e-09 & 4.486e-09 & 2.803e-09 \\
			\hline
			14 & 5.834e-11 & 1.030e-09 & 3.247e-10 & 3.811e-12 & 2.057e-10 & 2.404e-10 \\
			\hline
			15 & 4.120e-07 & 2.537e-08 & 2.851e-08 & 6.790e-09 & 3.078e-09 & 5.074e-09 \\
			\hline
		\end{tabular}
	\end{table}
	Table~\ref{table:Mesh-independent time step} lists the relative errors of the computed eigenvalues $\text{err}_{i}, \ i = 1, 2,\cdots,N$, which are obtained on the fixed finite element mesh (fixed degree of freedom $N_g = 39601$) while the time step size $\tau$ is progressively increased. The accuracy remains essentially unchanged and no loss of stability is detected, thereby verifying that the admissible time step is \emph{mesh-independent}; in particular, the scheme is not subject to any CFL conditions. Moreover, a larger $\tau$ reduces the number of iterations and the wall-clock time required for convergence, yielding a significant overall speed-up of the algorithm.
	
	\subsection{Test problem~II: Three–dimensional Schr\"odinger equation for hydrogen atom}
	
	Consider the Schr\"odinger equation for hydrogen atoms \cite{greiner2011quantum}: Find $(u,\lambda) \in H^1(\R^3) \times \R$ such that 
	\begin{equation}\label{eq:3D hydrogen}
		\left(-\frac{1}{2} \Delta-\frac{1}{|x|}\right) u=\lambda u, \qquad \int_{\mathbb{R}^3}|u|^2  = 1.
	\end{equation}
	The eigenvalues of (\ref{eq:3D hydrogen}) are $\lambda_n=-\frac{1}{2 n^2}$ $(n=1,2, \cdots)$ and the multiplicity of $\lambda_n$ is $n^2$.
	
	Since the eigenvectors of \eqref{eq:3D hydrogen} decay exponentially, instead of (\ref{eq:3D hydrogen}), we may solve the following eigenvalue problem: Find $(u,\lambda) \in H_0^1(\Omega)\times \mathbb{R}$ such that 
	\begin{equation}\label{eq:3D bound hydrogen}
		\left(-\frac{1}{2} \Delta-\frac{1}{|x|}\right) u =\lambda u, \qquad \int_{\Omega} u^2 = 1,
	\end{equation}
	where $\Omega$ is some bounded domain in $\mathbb{R}^3$. In our computation, we choose $\Omega=(-20.0,20.0)^3$ and compute approximations of the first 2 smallest eigenvalues and their corresponding eigenvector space approximations. Since the multiplicity of the $n$-th smallest eigenvalue is $n^2$, for the discrete problem of (\ref{eq:3D bound hydrogen}), we calculate the first 5 smallest eigenvalues and their associated eigenvectors.  We adopt the adaptive finite element method \cite{dai2015convergence} to deal with the spatial discretization with degrees of freedom $N_g = 570662$, and use a fixed time step $\tau = 1.0$ for temporal discretization. The reference eigenvalues $\lambda^*_i$ and residual norms $r_i$ for $i = 1, 2, \cdots,N$ in Table \ref{table:reference eigenvalues} were computed using the \emph{eigsolve} solver from \emph{IterativeSolvers.jl}, where $r_i = \|\mathcal{H}u_i -\lambda^*_iu_i\|,\  i = 1, 2, \cdots,N$.
	
	\begin{table}[H]
		\centering
		\small
		\begin{minipage}[t]{0.48\linewidth}
			\centering
			\caption{Reference eigenvalues $\lambda_i^*$ and its residual norm $r_i$}
			\label{table:reference eigenvalues}
			\begin{tabular}{ccc}
				\hline
				$i$ & $\lambda_i^*$ & $r_i$ \\
				\hline
				1 & $-0.4999583481345601$ & $2.605\times10^{-9}$ \\
				2 & $-0.1249998780617823$ & $3.873\times10^{-7}$ \\
				3 & $-0.1249998492802271$ & $3.828\times10^{-7}$ \\
				4 & $-0.1249992663791233$ & $1.244\times10^{-4}$ \\
				5 & $-0.1249961959501441$ & $2.793\times10^{-6}$ \\
				\hline
				&\multicolumn{2}{l}{$E(U^*) = -0.49997676890291845$} \\
				\hline
			\end{tabular}
		\end{minipage}\hfill
		\begin{minipage}[t]{0.52\linewidth}
			\centering
			\caption{Approximate eigenvalues $\lambda_i$ and their relative errors $\text{err}_{i}$}
			\label{tab:compute 5 eigenvalues of hydrogen}
			\begin{tabular}{ccc}
				\hline
				$i$ & $\lambda_i$ & $\text{err}_{i}$ \\
				\hline
				1 & $-0.49995834814124984$ & $1.337\times10^{-11}$ \\
				2 & $-0.12499988213671587$ & $3.244\times10^{-6}$ \\
				3 & $-0.12499986154701304$ & $9.334\times10^{-5}$ \\
				4 & $-0.1249998518636215$ & $6.827\times10^{-7}$ \\
				5 & $-0.12499619600148248$ & $4.114\times10^{-9}$ \\
				\hline
				&\multicolumn{2}{l}{$E(U_\text{end})= -0.4999700698450414$} \\
				\hline
			\end{tabular}
		\end{minipage}
	\end{table}

	Similarly, for the Schr\"odinger equation for hydrogen atoms, the discrete energy $E(U^{n})$ decreases monotonically with the iterations (as shown in Figure~\ref{fig:Energy decay of hydrogen}), and the scheme preserves orthogonality (as shown in Figure~\ref{fig:ortho err of hydrogen}). These observations once again corroborate Theorems~\ref{thm: Energy E(Un) decay} and~\ref{thm: orthogonality of Un}.
	
	We also observe the exponential convergence of both the discrete energy (as shown in Figure~\ref{fig:Energy exponential decay of hydrogen}) and the eigenvectors (as shown in Figure~\ref{fig:Eigenfunctions exponential convergence of hydrogen}), where the convergence of the eigenvectors is \emph{orbital-to-orbital}.
	 \begin{figure}[H]
		\centering
		\subfloat[Convergence curves of the energy]{\label{fig:Energy decay of hydrogen}\includegraphics[width=0.49\linewidth]{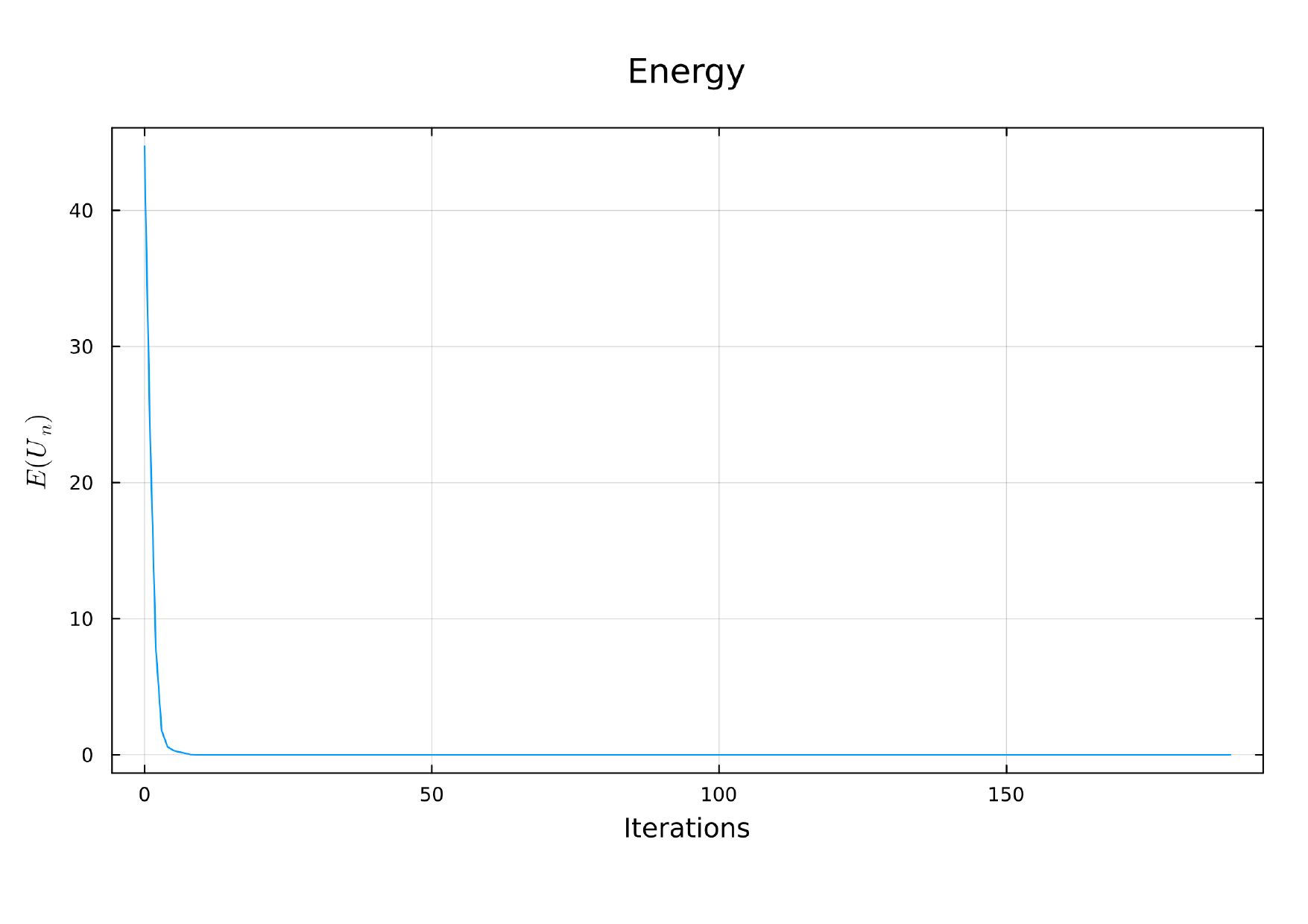}}
		\subfloat[Curves of orthogonality error]{\label{fig:ortho err of hydrogen}\includegraphics[width=0.49\linewidth]{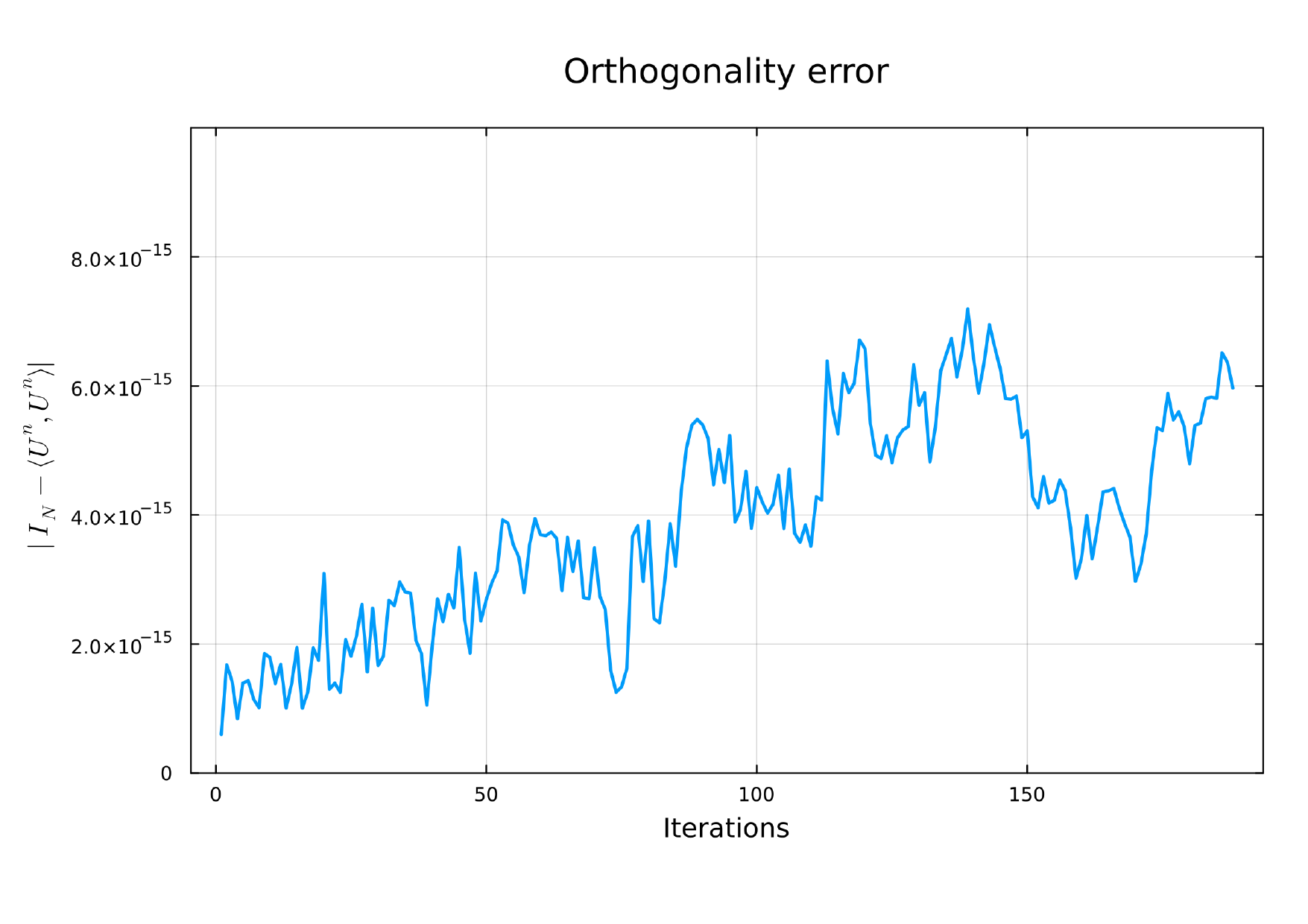}}
		
		\subfloat[Relative error curves of the energy]{\label{fig:Energy exponential decay of hydrogen}\includegraphics[width=0.49\linewidth]{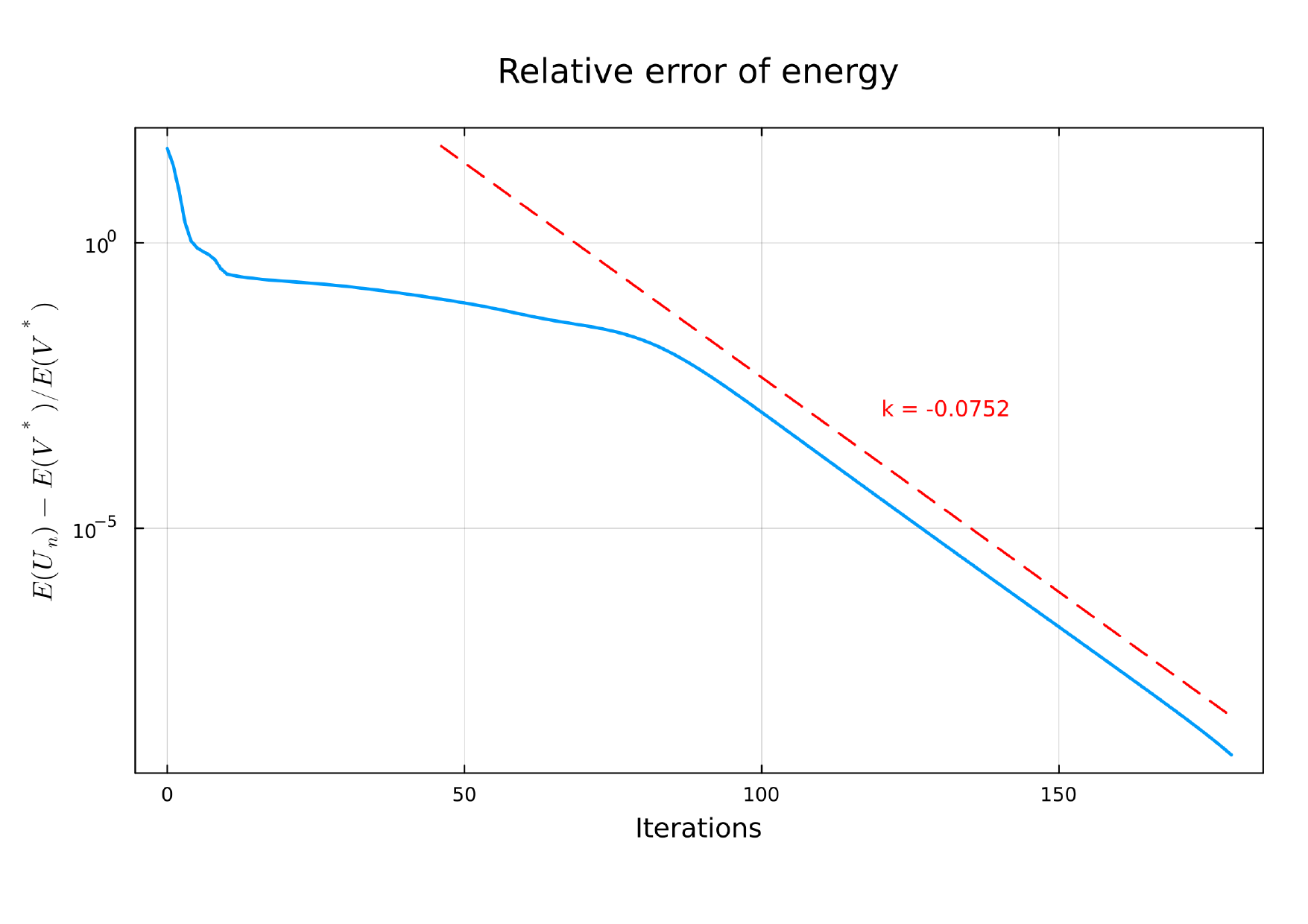}}
		\subfloat[Relative error curves of the eigenvectors]{\label{fig:Eigenfunctions exponential convergence of hydrogen}\includegraphics[width=0.49\linewidth]{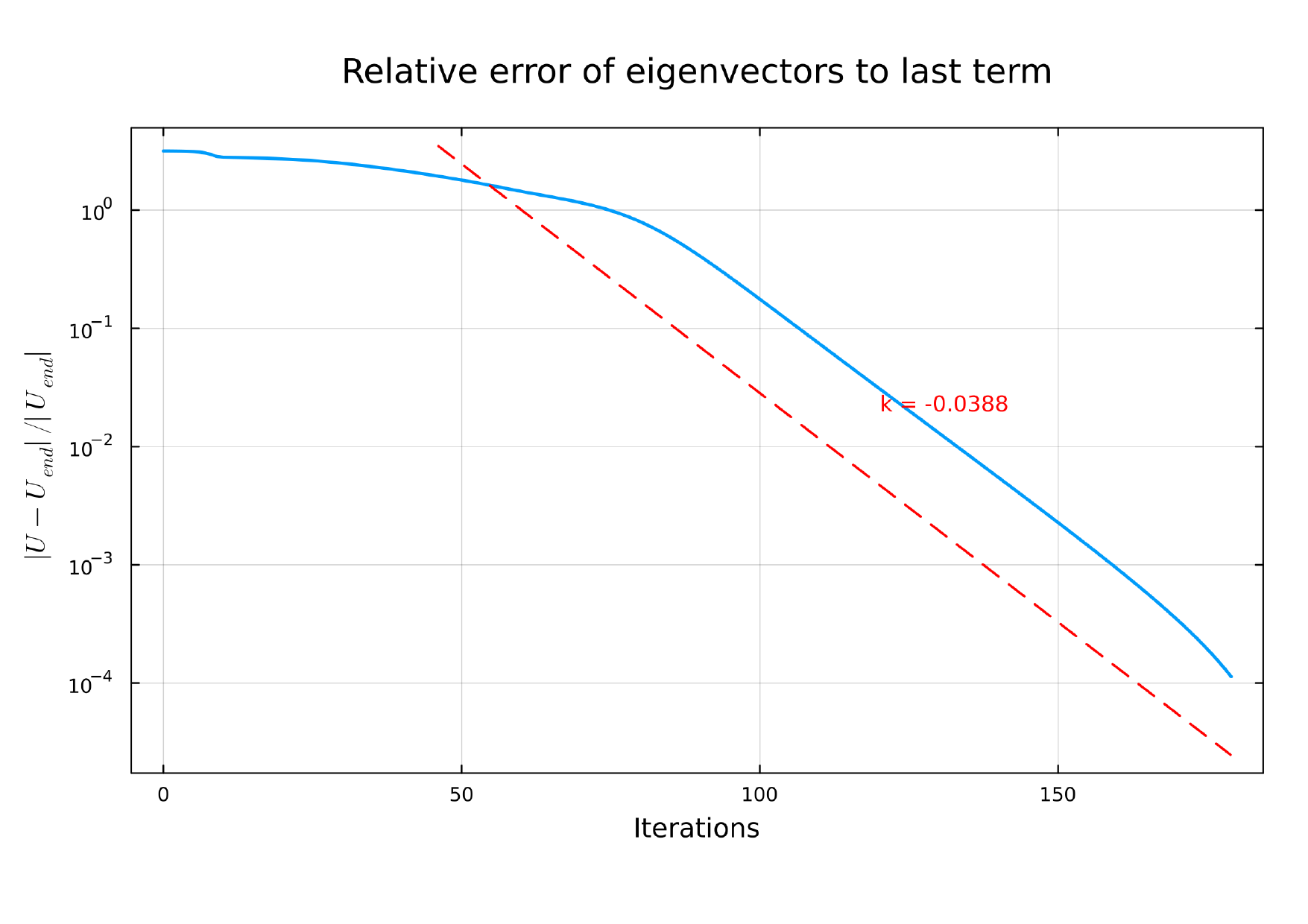}}
		\caption{Numerical results for test problem II}
	\end{figure}
	
	Table~\ref{tab:compute 5 eigenvalues of hydrogen} lists the relative errors of the first five eigenvalues, and Figure~\ref{fig:eigenvector of hydrogen} shows the 2D slice heatmaps of the numerical solution. Collectively, these findings qualitatively confirm the efficiency of our approach proposed in this paper, highlighting the robustness of the model when addressing large-scale three-dimensional problems.
	
	\begin{figure}[H]
		\centering
		\subfloat[eigenvector corresponding to $\lambda_1$: $u_1$]{\label{fig:H5:1}\includegraphics[width = 0.6\linewidth]{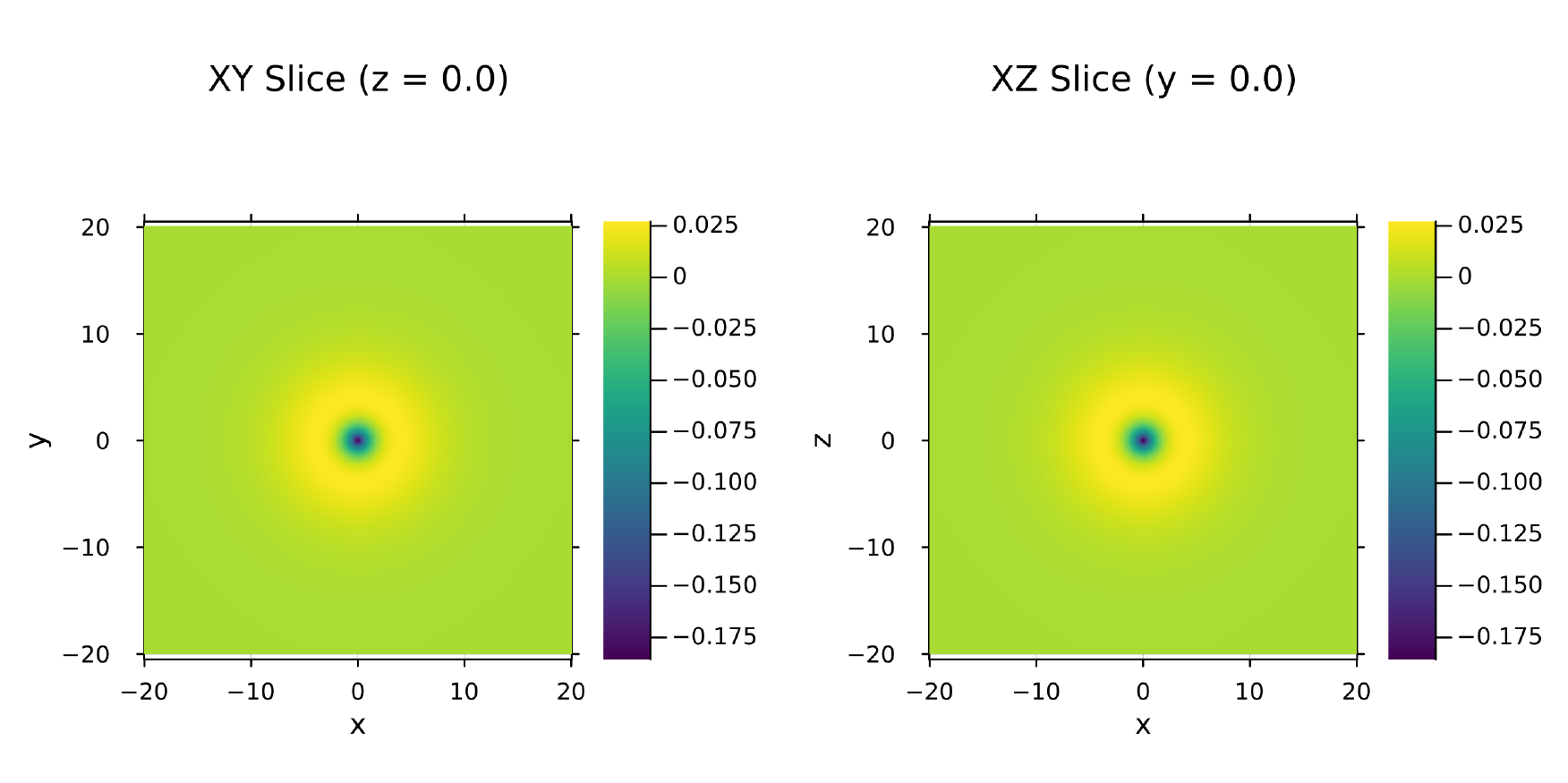}}
		
		\subfloat[eigenvector corresponding to $\lambda_2$: $u_2$]{\label{fig:H5:2}\includegraphics[width = 0.49\linewidth]{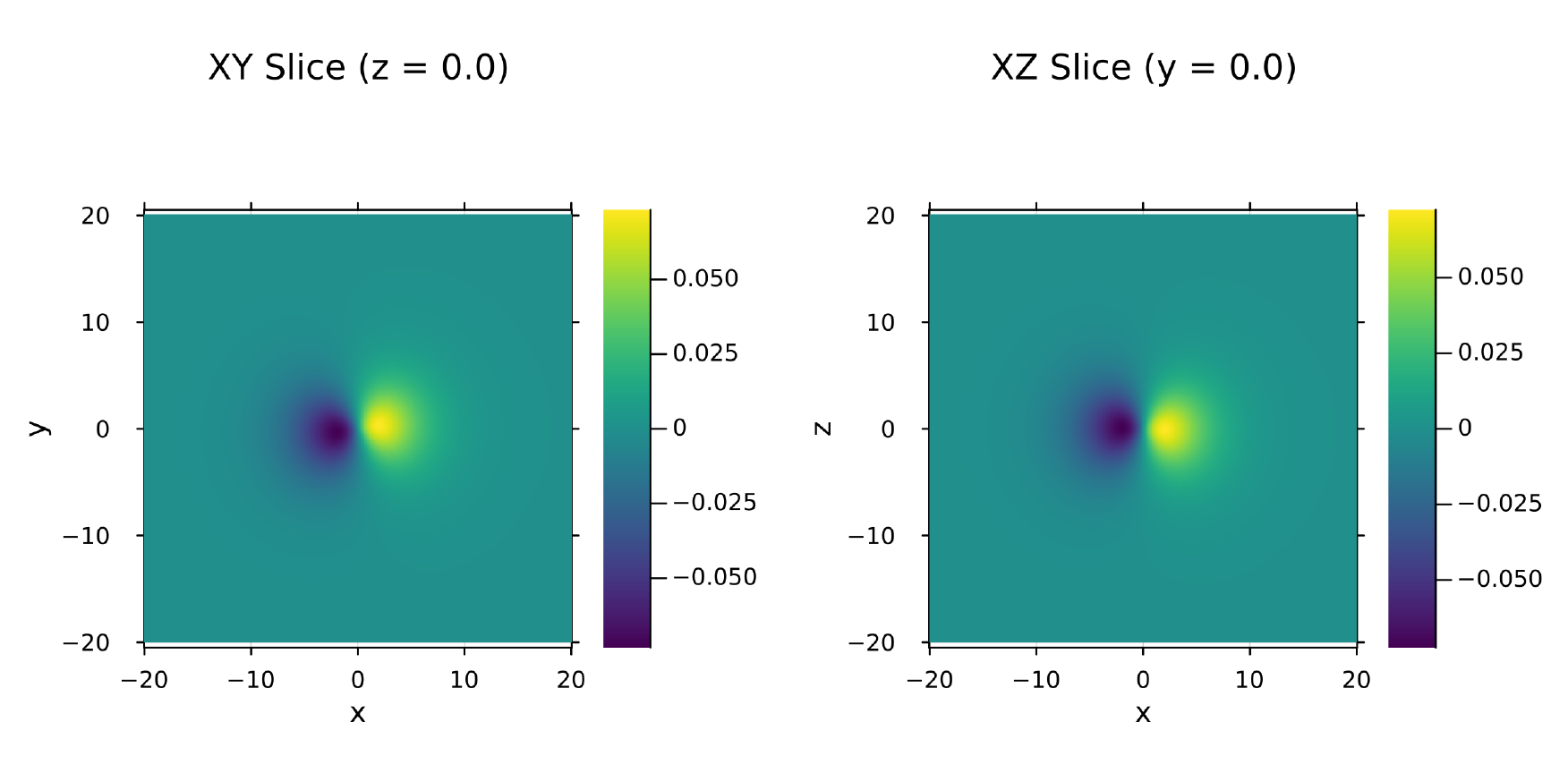}}
		\subfloat[eigenvector corresponding to $\lambda_2$: $u_3$]{\label{fig:H5:3}\includegraphics[width = 0.49\linewidth]{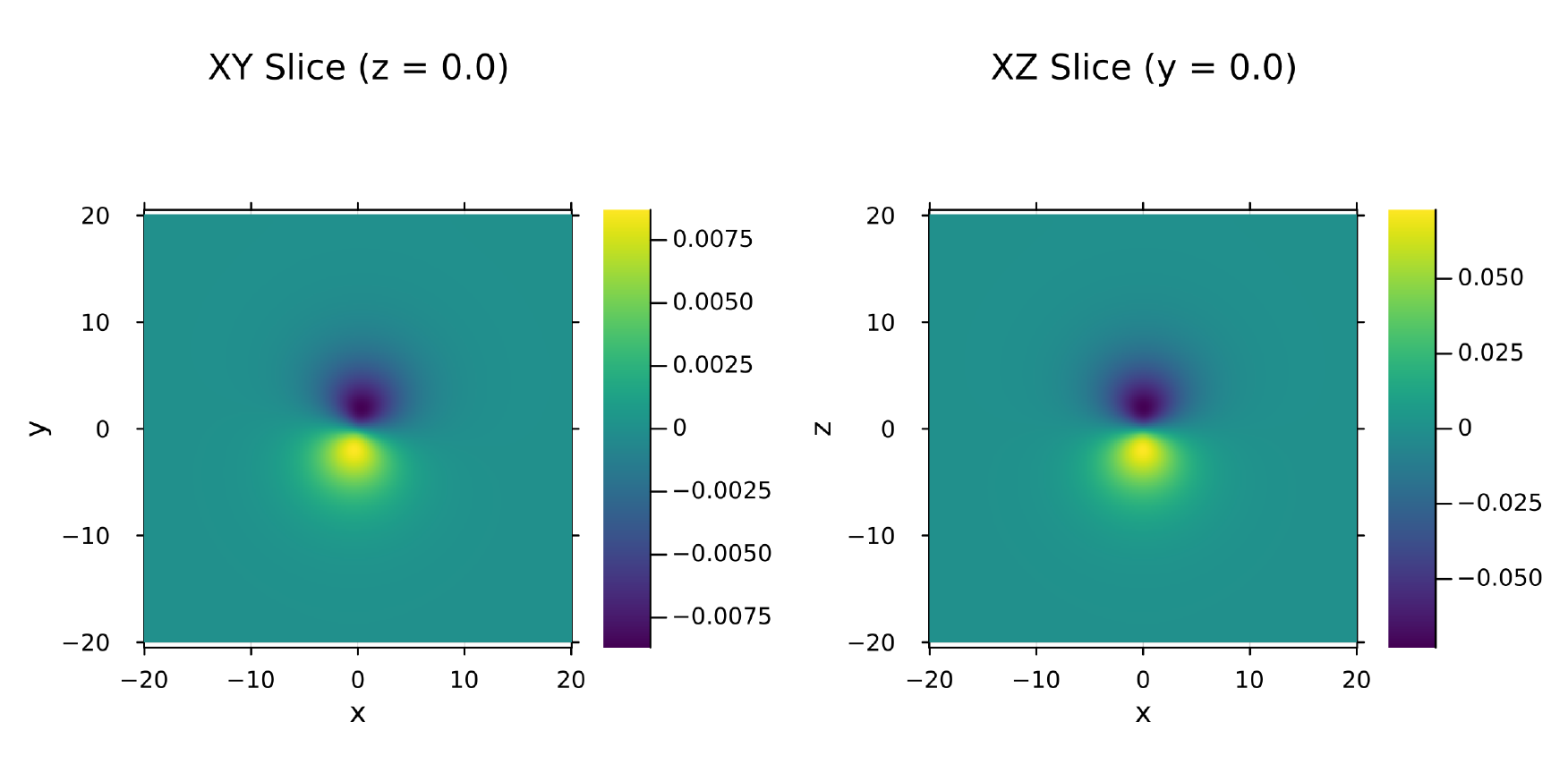}}
		
		\subfloat[eigenvector corresponding to $\lambda_2$: $u_4$]{\label{fig:H5:4}\includegraphics[width = 0.49\linewidth]{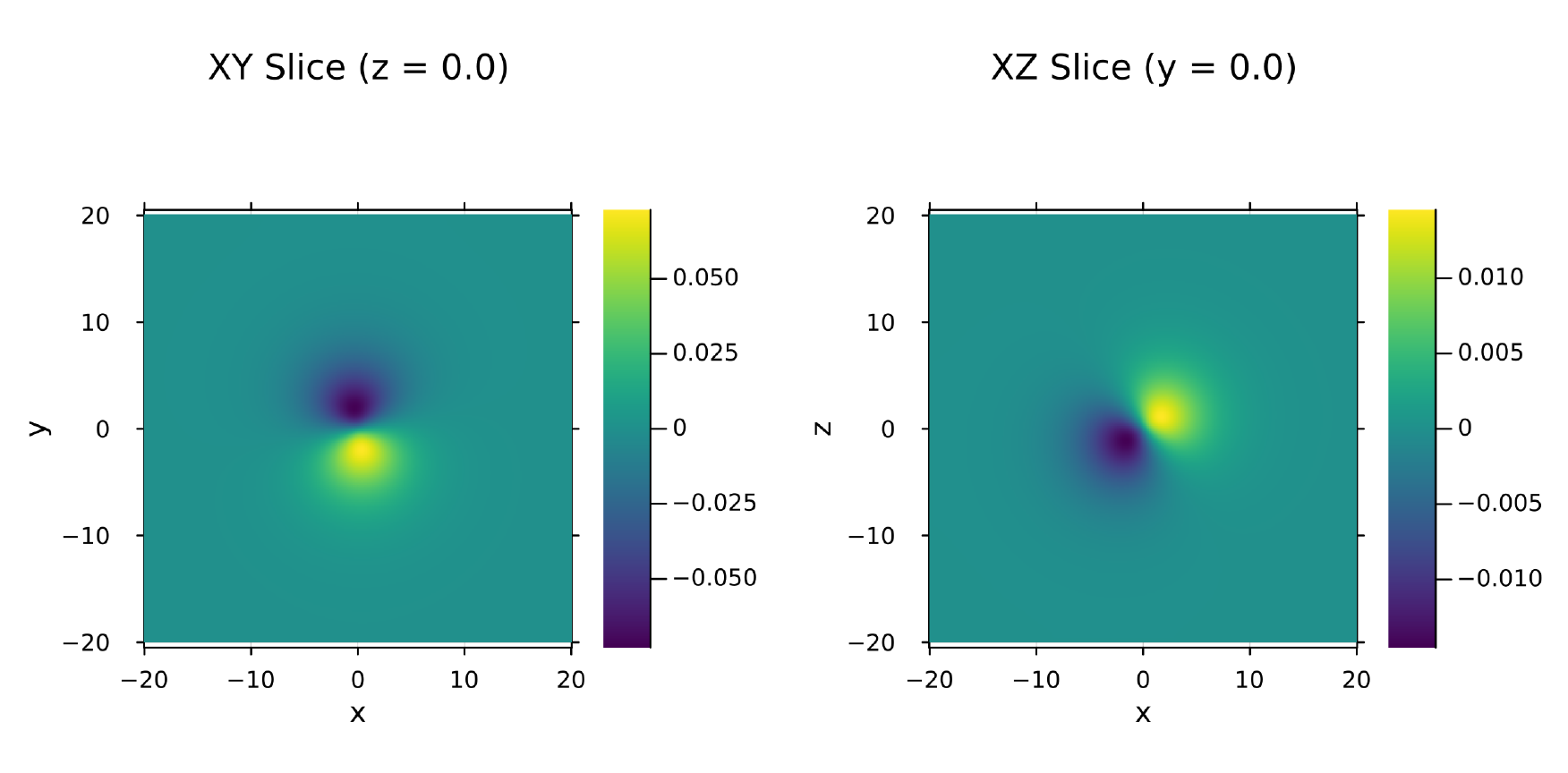}}
		\subfloat[eigenvector corresponding to $\lambda_2$: $u_5$]{\label{fig:H5:5}\includegraphics[width = 0.49\linewidth]{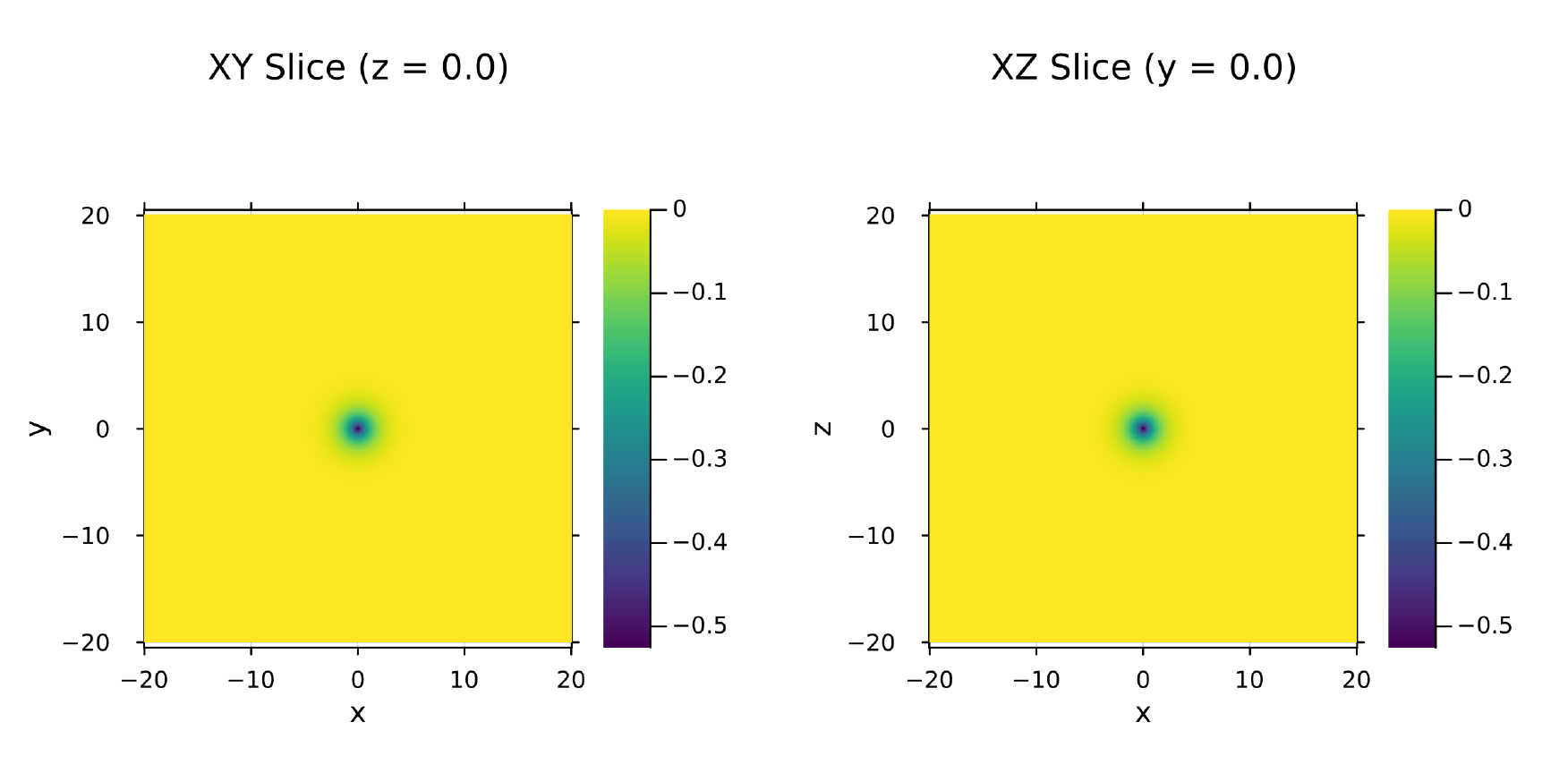}}
		
		\caption{Two-dimensional slice heatmap of five eigenvectors: (a) correspond to the first single eigenvalue $\lambda_1$; (b)--(e) correspond to the second eigenvalue $\lambda_2$ with quadruple degenerate.}
		\label{fig:eigenvector of hydrogen}
	\end{figure}
	
	The numerical experiments confirm several key aspects of the theoretical developments established in Section~\ref{section:Time discretization}. The results demonstrate strict energy dissipation and maintenance of orthogonality. Moreover, the numerical experiments exhibit exponential convergence of both the energy and eigenvectors, highlighting the efficiency of the algorithm. Notably, the time-step constraint is independent of the spatial discretization, allowing for flexible and efficient time-stepping schemes. Furthermore, the method possesses high accuracy and efficiency for realistic three-dimensional quantum models, underscoring its applicability to complex problems.
	
	\section{Conclusion}\label{section:Conclusion}
	To address the computational complexity and parallel scalability limitations caused by orthogonalization operations, particularly for eigenvalue problems requiring many eigenpairs, we have proposed an intrinsic orthogonality-preserving model structured as an evolution equation. Based on this model, we developed a numerical method that automatically preserves orthogonality and is energy-dissipative throughout the iteration process. We rigorously proved the convergence of the proposed model and numerical method. Numerical experiments validate the theoretical analyses and demonstrate the high efficiency of the proposed algorithm. 

The proposed method offers a promising approach for efficiently computing many eigenpairs for large scale eigenvalue problems with orthogonality restrictions. Our ongoing work focuses on developing more robust orthogonality-preserving schemes and on extending the approach to nonlinear settings; the latter demands sophisticated analysis and will be treated elsewhere.
	
	\appendix
	\section{Detailed proofs}\label{app:proofs}
	
	    \subsection{Proof of Lemma~\ref{lemma:B_n exists}}\label{proof of lemma:B_n exists}
		
		\begin{proof}
			Choose $\bm{\alpha}  \in \R^N$ with $|\bm{\alpha} | = 1$, and denote $u = \sum_{i = 1}^N \alpha_i u_i$. Then, 
			\begin{align*}
				\bm{\alpha} \big(\langle  \mathcal{G}U, \mathcal{G} U \rangle -   \langle  \mathcal{G}U,  U \rangle \langle \mathcal{G}U,  U \rangle\big)\bm{\alpha} ^{\top} 
				=  (\G u, \G u) - \sum_{i=1}^N (u_i, \G u)^2,
			\end{align*}
			which implies that
			\begin{align*}
				(\G u, \G u) - \sum_{i=1}^N (u_i, \G u)^2 \geqslant 0
			\end{align*}
			and completes the proof.
		\end{proof}
        
	\subsection{Proof of Lemma~\ref{lem: skew-symmetry}}\label{proof of lemma: skew-symmetry}
	\begin{proof}
		For any $V,W\in [H_0^1(\Omega)]^N$, the following equality holds:
		\begin{align*}
			\langle V, \mathcal{L}_U W \rangle + \langle \mathcal{L}_U V, W \rangle &  = \langle V, U \rangle \langle \G U,  W \rangle - \langle V, \G U \rangle\langle U, W \rangle \\
			& \quad + \langle V, \G U \rangle \langle U, W\rangle - \langle V, U \rangle \langle \G U , W \rangle \\
			& = 0.
		\end{align*}
		Furthermore, if $U \in \mathcal{M}^N$, then
		\begin{equation*}
			\begin{aligned}
				\langle U, \mathcal{L}_U U \rangle  &=  \langle U, U \rangle \langle \G U,  U \rangle - \langle U, \G U \rangle\langle U, U \rangle 
				\\& = \langle \G U,  U \rangle - \langle U, \G U \rangle = 0.
			\end{aligned}
		\end{equation*}
		Thus, the proof is complete.
	\end{proof}

	\subsection{Proof of Lemma~\ref{lem: bound property of L_U}}\label{proof of lem: bound property of L_U}
		\begin{proof}
			Note that the $i$-th component of $\mathcal{L}_UV$ is 
			\begin{align*}
				[\mathcal{L}_UV]_i = \sum_{j = 1}^N u_j (\G u_i, v_j) - \sum_{j = 1}^N \G u_j ( u_i, v_j).
			\end{align*}
			It follows that
			\begin{align*}
				\|[\mathcal{L}_UV]_i\|_a  \leqslant \sum_{j = 1}^N |(\G u_i, v_j)| \|u_j\|_a + \sum_{j = 1}^N |( u_i, v_j)| \|\G u_j\|_a  \leqslant C \|U\|_a^2 \sum_{j = 1}^N \|v_j\|.
			\end{align*}
			Consequently, the following inequality holds: 
			\begin{align*}
				\|[\mathcal{L}_UV]_i\|_a^2 \leqslant C N \|U\|_a^4 \|V\|^2.
			\end{align*}
			Summing over $i$ from $1$ to $N$ leads to 
			\begin{align*}
				\|\L_U V \|_a^2 \leqslant C \|U\|_a^4  N^2 \|V\|^2,
			\end{align*}
			which completes the proof.
		\end{proof}

		\subsection{Proof of Lemma~\ref{lem: Local Lipschitz property}}\label{proof of lem: Local Lipschitz property}
		\begin{proof}
			The $i$-th component of $\mathcal{L}_UU$ is given by
			\begin{align*}
				[\mathcal{L}_UU]_i = \sum_{j = 1}^N u_j (\G u_i, u_j) - \sum_{j = 1}^N \G u_j ( u_i, u_j).
			\end{align*}
			It follows that 
			\begin{align*}
				\| \mathcal{L}_U U - \mathcal{L}_V V \|_a^2 = \sum_{i = 1}^N \| [\mathcal{L}_UU]_i - [\mathcal{L}_VV]_i \|_a^2.
			\end{align*}
			The difference $[\mathcal{L}_UU]_i - [\mathcal{L}_VV]_i$ can be decomposed as follows:
			\begin{align*}
				[\mathcal{L}_UU]_i - [\mathcal{L}_VV]_i & = \sum_{j = 1}^N (u_j - v_j) (\G u_i, u_j) + \sum_{j = 1}^N v_j \big((\G u_i, u_j) - (\G v_i, v_j)\big) \\
				& \quad - \sum_{j = 1}^N ( \G u_j - \G v_j )(u_i,u_j) - \sum_{j=1}^N \G v_j \big( (u_i,u_j) - (v_i,v_j) \big).
			\end{align*}
			Observe that
			\begin{align*}
				|(\G u_i, u_j) - (\G v_i, v_j)| & \leqslant |(\G u_i - \G v_i, u_j)| + |(\G v_i, u_j - v_j)| \\
				& \leqslant \|\G u_i - \G v_i\|\|u_j\| + \|\G v_i\|\|u_j - v_j\| \\
				& \leqslant C_M \|u_i - v_i\| + C_M \|u_j - v_j\|,
			\end{align*}
			and similarly,
			\begin{align*}
				|( u_i, u_j) - ( v_i, v_j)| & \leqslant |( u_i -  v_i, u_j)| + |( v_i, u_j - v_j)| \\
				& \leqslant \| u_i -  v_i\|\|u_j\| + \| v_i\|\|u_j - v_j\| \\
				& \leqslant C_M \|u_i - v_i\| + C_M \|u_j - v_j\|.
			\end{align*}
			By combining these estimates, we obtain
			\begin{align*}
				\| [\mathcal{L}_UU]_i - [\mathcal{L}_VV]_i\|_a & \leqslant \sum_{j =1}^N C_M \|u_j - v_j\|_a + \sum_{j =1}^N C_M (\|u_i - v_i\| +  \|u_j - v_j\|) \\
				& \quad + \sum_{j =1}^N C_M \|u_j - v_j\|  + \sum_{j =1}^N C_M (\|u_i - v_i\| +  \|u_j - v_j\|) \\
				& \leqslant N C_M \|u_i - v_i\|_a + C_M \sum_{j =1}^N  \|u_j - v_j\|_a.
			\end{align*}
			Applying H\"older's inequality further leads to
			\begin{align*}
				\| [\mathcal{L}_UU]_i - [\mathcal{L}_VV]_i\|_a^2 \leqslant C_M N^2 \|u_i - v_i\|_a^2 + C_M N \sum_{j =1}^N  \|u_j - v_j\|_a^2.
			\end{align*}
			Summing $i$ from $1$ to $N$ yields
			\begin{align*}
				\| \mathcal{L}_U U - \mathcal{L}_V V \|_a^2 \leqslant C_M N^2 \|U - V\|_a^2,
			\end{align*}
			which completes the proof.
		\end{proof}

		\subsection{Proof of Lemma~\ref{lem: uniform bound}}\label{proof of lem: uniform bound}
		\begin{proof}
			Let $\bm{\alpha}  \in \R^N$ be an arbitrary vector with $|\bm{\alpha} | = 1$. By Proposition \ref{prop: orthogonality preserving}, we have for all $t \in [0,T)$, $\langle U(t), U(t) \rangle = I_N$, which implies $\bm{\alpha} \langle U(t), U(t) \rangle \bm{\alpha}^{\top} = 1$, or equivalently, $\|\sum_{i = 1}^N \alpha_i u_i(t)\| = 1$.
			Using this result, we can obtain an upper bound for $\bm{\alpha} \langle \G U(t), U(t) \rangle \bm{\alpha} ^{\top}$:
			\begin{align*}
				\bm{\alpha} \langle \G U(t), U(t) \rangle \bm{\alpha} ^{\top}  = \left\|\G \left( \sum_{i =1}^N \alpha_i u_i(t) \right)\right\|_a^2 \leqslant C \left\|\sum_{i = 1}^N \alpha_i u_i(t)\right\|^2 \leqslant C,
			\end{align*}
			where $C>0$ is independent of $t$.
			
			Similarly, a time-independent lower bound for $\bm{\alpha} \langle \G U(t), U(t) \rangle \bm{\alpha} ^{\top}$ can be derived:
			\begin{align*}
				1 = \left\|\sum_{i = 1}^N \alpha_i u_i(t)\right\|^2 & = \left(\sum_{i = 1}^N \alpha_i u_i(t), \sum_{i = 1}^N \alpha_i u_i(t)\right) = \left(\G (\sum_{i = 1}^N \alpha_i u_i(t)), \sum_{i = 1}^N \alpha_i u_i(t) \right)_a \\
				& \leqslant \left\| \G (\sum_{i = 1}^N \alpha_i u_i(t))\right\|_a \left\|\sum_{i = 1}^N \alpha_i u_i(t)\right\|_a \leqslant  \left\| \G (\sum_{i = 1}^N \alpha_i u_i(t))\right\|_a \|U(t)\|_a \\
				& \leqslant 2 E(U^0) \left\| \G (\sum_{i = 1}^N \alpha_i u_i(t))\right\|_a  = 2 E(U^0)\cdot \bm{\alpha} \langle \G U(t), U(t) \rangle \bm{\alpha}^{\top} .
			\end{align*}
			
			By combining the upper and lower bounds for $\bm{\alpha} \langle \G U(t), U(t) \rangle \bm{\alpha} ^{\top}$, it can be concluded that there exist constants $C_1$ and $C_2$ such that
			\begin{align*}
				\lambda(\langle \G U(t), U(t) \rangle) \in [C_1, C_2], \qquad \forall\, t \in [0,T),
			\end{align*}
			which completes the proof.
		\end{proof}

		\subsection{Proof of Lemma~\ref{lemma: sequence U converge}}\label{proof of lemma: sequence U converge}
		\begin{proof}
			Note that 
			\begin{align*}
				\int_0^{\infty} \|U'(t)\|_a^2 \ \text{d}t \leqslant - C \int_0^{\infty} E'(U(t)) \ \text{d}t  \leqslant  C E(U^0).
			\end{align*}
			This inequality implies the existence of a sequence $\{t^n\}$ such that $\|U'(t^n)\|_a \to 0$. 
			
			The boundedness of $U(t^n)$ guarantees that there exists $\bar{U} \in [H_0^1(\Omega)]^N$ such that $U(t^n) \rightharpoonup \bar{U}$ weakly in $[H_0^1(\Omega)]^N$. Moreover, strong convergence in $[L^2(\Omega)]^N$ shows that $\langle \bar{U}, \bar{U} \rangle = I_N$ and $\G U(t^n) \to \G \bar{U}$ strongly in $[H_0^1(\Omega)]^N$. Then we have 
			\begin{align*}
				- U(t^n) \langle \G U(t^n), U(t^n) \rangle + \G U(t^n) \langle  U(t^n), U(t^n) \rangle \rightharpoonup - \bar{U} \langle \G \bar{U}, \bar{U} \rangle + \G \bar{U} \langle  \bar{U}, \bar{U} \rangle
			\end{align*}
			weakly in $[H_0^1(\Omega)]^N$. This implies 
			\begin{align*}
				- \bar{U} \langle \G \bar{U}, \bar{U} \rangle + \G \bar{U} \langle  \bar{U}, \bar{U} \rangle = 0,
			\end{align*}
			which means that $\bar{U}$ is a solution of \eqref{equ: model problem 1} and, consequently, a critical point of the energy $E(\cdot)$. 
			
			Since $ \|U'(t^n) \langle \G U(t^n), U(t^n) \rangle^{-1} \|_a \to 0 $, it can be obtained that
			\begin{align*}
				(U(t^n), U'(t^n) \langle \G U(t^n), U(t^n) \rangle^{-1})_a \to 0 = (\bar{U}, P_{\bar{U}}\bar{U})_a.
			\end{align*}
			This convergence implies that $\|U(t^n)\|_a \to \|\bar{U}\|_a$, which, together with the weak convergence, completes the proof.
		\end{proof}

		\subsection{Proof of Lemma~\ref{lemma:convergence of equivalence}}\label{proof of lemma:convergence of equivalence}
		\begin{proof}
			We prove the conclusion by contradiction. Suppose there exists a $\{t^n\}$ such that
			\begin{align}
				\| [U(t^n)] - [U^*] \|_a \geqslant \eps, \quad \forall n \in \mathbb{N}
			\end{align}
			for some $\eps > 0$. Similar to Lemma \ref{lemma: sequence U converge}, there exists a subsequence $\{t^{n_k}\}$ and $\bar{U}\in [H_0^1(\Omega)]^N$ such that 
			\begin{equation*}
				\|U(t^{n_k}) -  \bar{U} \|_a \to 0 \qquad \text{as $n \to \infty$}.
			\end{equation*}
			
			Since the decay of energy and $\bar{U} \in \mathcal{M}^N$ is a solution of \eqref{equ: model problem 1}, we conclude that 
			\begin{align*}
				\lim\limits_{k \to \infty} E(U(t^{n_k})) = 	E(\bar{U})  = E_{\text{GS}},
			\end{align*}
			which means that $\bar{U}$ is a ground state, that is, $\bar{U} \in [U^*]$.
			
			Therefore,
			\begin{equation*}
				\begin{aligned}
					\| [U(t^{n_k})] - [U^*] \|_a =  \min\limits_{Q \in \mathcal{O}^N} \| U(t^{n_k}) - U^*Q\|_a
					\leqslant \| U(t^{n_k}) - \bar{U}\|_a \to 0 \quad \text{ as } k\rightarrow \infty,
				\end{aligned}
			\end{equation*}
			which contradicts the assumption that $\| [U(t^n)] - [U^*] \|_a \geqslant \eps$ for all $n \in \mathbb{N}$. 
		\end{proof}

		\subsection{Proof of Lemma~\ref{lemma: derivate of U(t) tends to 0}}\label{proof of lemma: derivate of U(t) tends to 0}
		
		\begin{proof}
			We see from Lemma \ref{lemma:convergence of equivalence} that for every $t \geqslant 0$, there exists a $Q(t)\in \mathcal{O}^N$ such that
			\begin{align*}
				\|U(t)Q(t) - U^*\|_a \to 0 \qquad \text{as $t \to \infty$},
			\end{align*}
			which yields
			\begin{align*}
				U'(t)Q(t) & = - U(t)Q(t) \langle \G U(t)Q(t), U(t)Q(t) \rangle + \G U(t)Q(t) \langle U(t)Q(t), U(t)Q(t) \rangle \\
				& \to - U^* \langle \G U^*, U^* \rangle + \G U^* \langle U^*, U^* \rangle = 0,
			\end{align*}
			as $t \to \infty$. Consequently, we have
			\begin{align*}
				\|U'(t)\|_a = \|U'(t)Q(t)\|_a \to 0
			\end{align*}
			and complete the proof.
		\end{proof}

		\subsection{Proof of Lemma~\ref{lem: converge to barU}}\label{proof of lem: converge to barU}
		
		\begin{proof}
			Note that 
			\begin{align*}
				\sum_{n = 0}^{\infty} \| \mathcal{L}_{U^n}U^{n}\|_a^2  \leqslant \sum_{n = 0}^{\infty} \frac{C}{\tau_n} (E(U^n) - E(U^{n+1}))  \leqslant \frac{C}{\tau_{\text{min}}} E(U^0) < \infty.
			\end{align*}
			This means the series $\sum_{n = 0}^{\infty} \| \mathcal{L}_{U^n}U^{n}\|_a^2$ converges, and thus
			\begin{align*}
				\lim_{n \to \infty} \| \mathcal{L}_{U^n}U^{n}\|_a^2 = 0.
			\end{align*}
			
			Suppose $U^n \rightharpoonup \bar{U}$ weakly in $[H_0^1(\Omega)]^N$. It follows that $U^n \to \bar{U}$ strongly in $[L^2(\Omega)]^N$. Hence, 
			\begin{align*}
				\langle \bar{U}, \bar{U} \rangle = \lim_{n \to \infty} \langle U^n, U^n \rangle = I_N.
			\end{align*}
			Moreover, we have $\langle \G U^n, U^n \rangle \to \langle \bar{U}, \bar{U} \rangle$. Consequently,
			\begin{align*}
				\mathcal{L}_{U^n}U^{n} \rightharpoonup \mathcal{L}_{\bar{U}}U^{*}
			\end{align*}
			weakly in $[H_0^1(\Omega)]^N$. Therefore, $\mathcal{L}_{\bar{U}}\bar{U}= 0$, which means $\bar{U}$ is a solution of (\ref{equ: model problem 0}). Observe that
			\begin{align*}
				\langle U^n, \mathcal{L}_{U^n}U^{n} \rangle_a \to 0 = \langle \bar{U}, \mathcal{L}_{\bar{U}}\bar{U}\rangle_a.
			\end{align*}
			This implies $\|U^n\|_a \to \|\bar{U}\|_a$, and hence the strong convergence follows.
		\end{proof}
		
		\subsection{Proof of Lemma~\ref{lem: same projection}}\label{proof of lem: same projection}
		
		\begin{proof}
			Since $\langle U^*, U^* \rangle = I_N$, there holds
			\begin{align*}
				\P U = U^*\langle U^*, U \rangle.
			\end{align*}
			Furthermore,
			\begin{align*}
				\P_a U & = U^*\langle U^*, U^* \rangle_a^{-1} \langle U^*, U \rangle_a \\
				& = U^* \Lambda^{-1} \Lambda \langle  U^*, U \rangle \\
				& = U^*\langle  U^*, U \rangle,
			\end{align*}
			which completes the proof.
		\end{proof}
		
		\subsection{Proof of Lemma~\ref{lemma: PGU=GPU}}\label{proof of lemma: PGU=GPU}
		\begin{proof}
			Since $\G U^{*} = U^* \Lambda^{-1}$,
			it follows that 
			\begin{align*}
				\P_{\bot} (\G U) & = \G U - U^*\langle U^*,  \G U \rangle \\
				& = \G U - U^* \Lambda^{-1} \langle U^*,   U \rangle
			\end{align*}
			and
			\begin{align*}
				\G (\P_{\bot}U) 
				& = \G U - \G U^* \langle U^*,   U \rangle \\
				& = \G U - U^* \Lambda^{-1} \langle U^*,   U \rangle.
			\end{align*}
			This completes the proof.
		\end{proof}
		
		\subsection{Proof of Lemma~\ref{lem: exponential convergence of numerical scheme}}\label{proof of lem: exponential convergence of numerical scheme}
		\begin{proof}
			Note that 
			\begin{align*}
				\|\P_{\bot}U^{n+1}\|_a^2 & = \|\P_{\bot}(U^n - \tau_n\L_{U^n}U^{n+\frac12})\|_a^2 \\
				& = \|\P_{\bot}U^n\|_a^2 - 2\tau_n (\P_{\bot}U^n, \P_{\bot}\L_{U^n}U^{n+\frac12})_a + \tau_n^2 \| \P_{\bot}\L_{U^n}U^{n+\frac12}\|_a^2.
			\end{align*}
			Since
			\begin{align*}
				\P_{\bot}\L_{U^n}U^{n+\frac12}& = \P_{\bot} \Big( U^n \langle \G U^n, U^{n+ \frac12} \rangle - \G U^n \langle U^n, U^{n+\frac12} \rangle \Big) \\
				& = \P_{\bot} U^n \langle \G U^n, U^{n+ \frac12} \rangle - \P_{\bot} \big(\G U^n \langle U^n, U^{n+\frac12} \rangle \big) \\
				& = \P_{\bot} U^n \langle \G U^n, U^{n+ \frac12} \rangle - \G \P_{\bot} U^n \langle U^n, U^{n+\frac12} \rangle,
			\end{align*}
			we have 
			\begin{align*}
				\| \P_{\bot}\L_{U^n}U^{n+\frac12} \|_a \leqslant C \|\P_{\bot}U^n\|_a
			\end{align*}
			and 
			\begin{align*}
				\| \P_{\bot}\L_{U^n}V \|_a \leqslant C \|\P_{\bot}U^n\|_a \|V\|, \qquad \forall V \in [H_0^1(\Omega)]^N.
			\end{align*}
			Therefore, we obtain
			\begin{align*}
				\|\P_{\bot}U^{n+1}\|_a^2 &\leqslant (1 + C \tau_n^2) \|\P_{\bot}U^n\|_a^2  - 2\tau_n (\P_{\bot}U^n, \P_{\bot}\L_{U^n}U^{n})_a \\
				& \quad - 2\tau_n (\P_{\bot}U^n, \P_{\bot}\L_{U^n}U^{n+\frac12} - \P_{\bot}\L_{U^n}U^{n})_a.
			\end{align*}
			For the last term, we may estimate as follows
			\begin{align*}
				& |(\P_{\bot}U^n, \P_{\bot}\L_{U^n}U^{n+\frac12} - \P_{\bot}\L_{U^n}U^{n+\frac12})_a| \\
				& \leqslant \|\P_{\bot}U^n\|_a \|\P_{\bot}\L_{U^n}U^{n+\frac12} - \P_{\bot}\L_{U^n}U^{n}\|_a \\
				& \leqslant C \|\P_{\bot}U^n\|_a^2 \|U^{n+1} - U^n\|_a \leqslant C \tau_n \|\P_{\bot}U^n\|_a^2.
			\end{align*}
			For the second term, there holds
			\begin{align*}
				\P_{\bot}\L_{U^n}U^{n}& = \P_{\bot} \Big( U^n\langle \G U^n, U^n \rangle  - \G U^n  \Big) \\
				& = \P_{\bot} U^n \langle \G U^n, U^n \rangle - \G \P_{\bot}U^n,
			\end{align*}
			which implies
			\begin{align*}
				(\P_{\bot}U^n, \P_{\bot}\L_{U^n}U^{n})_a = \text{tr} (\langle \P_{\bot}U^n, \P_{\bot}U^n \rangle_a \langle \G U^n, U^n \rangle) - \| \P_{\bot}U^n \|^2.
			\end{align*}
			Noting that $[U^n] \to [U^*]$, for any $\epsilon \in (0,\frac{1}{\lambda_N})$, there exists a $n_0\in \mathbb{N}+$ such that
			\begin{align*}
				\lambda_{\min} (\langle \G U^n, U^n \rangle)  \geqslant \frac{1}{\lambda_N} - \epsilon,  \quad \forall n\geqslant n_0.
			\end{align*}
			Consequently,
			\begin{align*}
				(\P_{\bot}U^n, \P_{\bot}\L_{U^n}U^{n})_a & \geqslant \left( \frac{1}{\lambda_N} - \epsilon \right) \| \P_{\bot}U^n \|_a^2 - \frac{1}{\lambda_{N+1}} \|\P_{\bot}U^n \|_a^2 \\
				& = \left( \frac{1}{\lambda_N} - \frac{1}{\lambda_{N+1}} - \epsilon \right) \|\P_{\bot}U^n \|_a^2.
			\end{align*}
			Combining the above estimates, we arrive at
			\begin{equation*}
				\begin{aligned}
					\|\mathcal{P}_{\bot}U^{n+1}\|_a^2 \leqslant \left(1 + C \tau_n^2 - 2 \left( \frac{1}{\lambda_N} - \frac{1}{\lambda_{N+1}} - \epsilon  \right)\tau_n\right)\|\mathcal{P}_{\bot}U^{n}\|_a^2.
				\end{aligned}
			\end{equation*}
			For any $ \epsilon \in (0,\frac{1}{\lambda_N} - \frac{1}{\lambda_{N+1}})$, denote $\omega = \sup\limits_{\tau \in [\tau_{\min}, \tau_{\max}]} \left(1 + C \tau^2 - 2 \left( \frac{1}{\lambda_N} - \frac{1}{\lambda_{N+1}} - \epsilon  \right)\tau\right)^{\frac12}$. It is obvious that there exists an interval $[\tau_{\min}, \tau_{\max}]$ such that $\omega \in (0,1)$. Thus, we conclude that
			\begin{align*}
				\|\P_{\bot}U^{n+1}\|_a \leqslant \omega \|\P_{\bot}U^n\|_a, \qquad \forall n \geqslant n_0,
			\end{align*}
			which completes the proof.
		\end{proof}

		\subsection{Proof of Lemma~\ref{lemma: [Un] tends to [U_real]}}\label{proof of lemma: [Un] tends to [U_real]}
		
		\begin{proof}
			Note that the function $f(\theta) = \frac{\sin\frac{\theta}{2}}{\sin\theta}$ is continuous on $[0,\frac{\pi}{2}]$, then
			\begin{align}
				\sin\frac{\theta}{2} \leqslant C \sin\theta, \qquad \forall\, \theta \in [0,\frac{\pi}{2}].
			\end{align}
			Consequently, we have
			\begin{align}
				\| [U^n] - [U^*]\|^2 = \sum_{j=1}^N 4\sin^2\frac{\theta_j}{2} \leqslant C \sin^2\theta_N = C \left( {\delta}_{L^2}(\text{span}(U^n),\text{span}(U^*)) \right)^2 \leqslant C e^{-2cn}.
			\end{align}
			
			Furthermore, by the definition of $\| [U^n] - [U^*]\| $ and $ \| [U^n] - [U^*]\|_a$, there exist $Q^n_1\in \mathcal{O}^N$ and $Q^n_2\in  \mathcal{O}^N$ depending on $U^n$, such that
			\begin{align}
				\| [U^n] - [U^*]\| = \|U^n - U^*Q^n_1\| \quad \text{and} \quad \| [U^n] - [U^*]\|_a = \|U^n - U^*Q^n_2\|_a.
			\end{align}
			Moreover, observe that
			\begin{align*}
				\| [U^n] - [U^*]\|_a^2 & = \|U^n - U^*Q^n_2\|_a^2 \\
				& = \|\P_{\bot}U^n\|_a^2 + \|\P U^n - U^*Q^n_2\|_a^2 \\
				& \leqslant \|\P_{\bot}U^n\|_a^2 + \|\P U^n - U^*Q^n_1\|_a^2.
			\end{align*}
			It follows from the equivalence of norms $\|\cdot\|$ and $\|\cdot\|_a$ on the finite dimension space $\text{span}(U^*)$ that
			\begin{equation*}
				\begin{aligned}
					\| [U^n] - [U^*]\|_a^2& \leqslant \|\P_{\bot}U^n\|_a^2 + C \|\P U^n - U^*Q^n_1\|^2 \\
					& \leqslant\|\P_{\bot}U^n\|_a^2 + C \|U^n - U^*Q^n_1\|^2,
				\end{aligned}
			\end{equation*}
			where the constant $C$ depends on the largest eigenvalue $\lambda_N$ and Lemma \ref{lem: same projection}. This inequality implies 
			\begin{align*}
				\| [U^n] - [U^*]\|_a^2 \leqslant C e^{-2cn},
			\end{align*}
			which completes the proof.
		\end{proof}

	\bibliographystyle{siamplain}
	\bibliography{references}
	
\end{document}